\numberwithin{equation}{section}
\newtheorem{theorem}{Theorem}[section]
\newtheorem{lemma}[theorem]{Lemma}
\newtheorem{proposition}[theorem]{Proposition}
\newtheorem{corollary}[theorem]{Corollary}
\def\cone{\operatorname{cone}}
\def\conv{\operatorname{conv}}
\theoremstyle{definition}
\newtheorem{definition}[theorem]{Definition}
\newtheorem{remark}[theorem]{Remark}
\newtheorem{example}[theorem]{Example}
\newcommand{\Z}{\mathbb{Z}}
\def\RR{{\mathbb{R}}}
\def\ZZ{{\mathbb{Z}}}
\def\NN{{\mathbb{N}}}
\begin{document}

\title[Ehrhart Functions of Weighted Lattice Points]
{Ehrhart Functions of Weighted Lattice Points}

\thanks{The first and fourth authors were supported by NSF grant
DMS-2434665, and the second and third authors were supported by SNII,
M\'exico.}  

\author[J. A. De Loera]{Jes\'us A. De Loera}
\address{
Department of Mathematics\\
University of California, Davis.
}
\email{deloera@math.ucdavis.edu}

\author[C. E. Valencia]{Carlos E. Valencia}
\address{
Departamento de
Matem\'aticas\\
Cinvestav, Av. IPN 2508, 07360, CDMX, M\'exico.
}
\email{cvalencia@math.cinvestav.edu.mx}

\author[R. H. Villarreal]{Rafael H. Villarreal}
\address{
Departamento de
Matem\'aticas\\
Cinvestav, Av. IPN 2508, 07360, CDMX, M\'exico.
}
\email{rvillarreal@cinvestav.mx}

\author[C. Wang]{Chengyang Wang}
\address{
Department of Mathematics\\
University of California, Davis.
}
\email{cyywang@ucdavis.edu}

\keywords{Lattice polytopes, weights on lattice points, Ehrhart
functions and reciprocity laws, Stanley's positivity and monotonicity
theorems, Ehrhart rings, $h^*$-vectors, semigroup rings}  
\subjclass[2020]{Primary 52B20; Secondary 13F20, 05A15, 90C10} 

\begin{abstract}
This paper studies three different ways to assign weights to the
lattice points of a convex polytope and discusses the algebraic and
combinatorial properties of the resulting weighted Ehrhart functions,
their generating functions, and their associated rings.  
These will be called $q$-weighted, $r$-weighted, and $s$-weighted
Ehrhart functions, respectively. The key questions we investigate are:  
\emph{When are the weighted Ehrhart series rational functions, and
which classical Ehrhart theory properties are preserved?  
And when are the abstract formal power series the Hilbert series of
Ehrhart rings of some polytope?} 
We prove generalizations about weighted Ehrhart $h^*$-coefficients of
$q$-weighted Ehrhart series and show $q$- and $s$-weighted Ehrhart
reciprocity theorems.   
Then, we show the $q$- and $r$-weighted Ehrhart rings are the
(classical) Ehrhart rings of weight lifting polytopes. 
\end{abstract}

\maketitle 

\emph{Dedicated to Richard P. Stanley, on the occasion of his 80th
birthday and gratefully celebrating his contributions at the
intersection of commutative algebra and combinatorics.}  

\section{Introduction}\label{section-intro}

Ehrhart functions play an important role in algebraic combinatorics, commutative
algebra, and convex geometry. Formally, the \textit{Ehrhart function}
of a rational convex polytope 
$\mathcal{P}$ in $\mathbb{R}^s$ is defined by
$E_\mathcal{P}(n)=|n\mathcal{P}\cap\mathbb{Z}^s|$. A famous result
says $E_\mathcal{P}(n)$ is a \emph{quasipolynomial} of degree
$d=\dim(\mathcal{P})$, whose leading coefficient is the relative
volume ${\rm vol}(\mathcal{P})$ of $\mathcal{P}$ for integral
polytopes. Its generating
function $F_\mathcal{P}$ is a rational function and there is a nice
\emph{reciprocity law} between the Ehrhart function of $\mathcal{P}$
and that of its relative interior.
See~\cite{BeckRobins,BG-book,ehrhart-book,graphs,Sta2,monalg-rev} and
references therein for all details on these classical facts.          

While the classical Ehrhart function simply counts the lattice points. 
In recent years, there have been several natural suggestions on how
to extend Ehrhart functions by adding weights to the lattice points.  
This paper studies three different (but related) ways to assign
weights to the lattice points and the properties of the
resulting weighted Ehrhart functions, generating functions, and 
associated rings.    

Here are the key definitions for the rest of the paper.
In what follows $K$ is a field and $ S=K[t_1^{\pm 1},\ldots,t_s^{\pm 1}] $ is a
 Laurent polynomial ring over $K$. Monomials of $S$ are abbreviated
 by $t^a:=t_1^{a_1}\cdots t_s^{a_s}$, for $a=(a_1,\dots,a_s)$ in $\ZZ^s$.   

\begin{definition}
We say a function $w: {\ZZ}^s \rightarrow {\ZZ}$ is a \emph{weight}
of degree $e$ if for all vectors $v \in \ZZ^s$ and any scalar $k$,
$w(kv)=h(k)w(v)$ where $h(k)$ is a univariate polynomial of degree $e$.   
If $\mathbb{Z}\subset K$ and $w$ is extended to $w\colon
K^s\rightarrow K$, we also call $w$ a weight.  
\end{definition}

Examples of weight functions appear everywhere; here are a few we
consider: $w$ is a constant function, $w$ is a homogeneous linear
form, $w=w(t_1,\ldots,t_s)$ is a homogeneous polynomial, and $w$ is a
quasipolynomial.   

With weight functions in hand, we can put weights on the lattice
points (we count them with a weight) or, equivalently, on the
monomials of the Ehrhart ring.   

For a convex rational polytope $\mathcal{P}$ in $\mathbb{R}^s$, the \textit{cone over}  $\mathcal{P}$, denoted by ${\rm cone}(\mathcal{P})$, is the polyhedral cone
$\mathbb{R}_+\mathcal{B}_P$ in
$\mathbb{R}^{s+1}$ generated by  
\begin{equation}\label{nov2-24}
B_\mathcal{P}:= \{(x,1) : x \in \mathcal{P}\cap\mathbb{Z}^s\}\subset
\ZZ^{s+1}.
\end{equation}

\begin{definition}\label{weightedseries} 
Let $\mathcal{P}$ be a polytope. 
For $i = 1, \ldots, p$, let $w_{i} \colon K^s\rightarrow K$,
$a\mapsto w_{i}(a)$, be a weight such that $w_i(\ZZ^s)\subset\ZZ$,
and let $w\colon K^s\rightarrow K^p$ be the   
function $w=(w_1,\ldots,w_p)$.  
We now define weighted Ehrhart functions using the $w_{i}$'s and new
auxiliary variables $q_1,\ldots,q_p$:   
\begin{enumerate}
\item[(1)] The \textit{$q$-weighted multivariate Ehrhart function}
and \textit{$q$-weighted multivariate Ehrhart series} of
$\mathcal{P}$ relative to $w$, denoted $E_{\mathcal{P}}^{q,w}$ and
$F_{\mathcal{P}}^{q,w}$ respectively, are given by       
\begin{align*}
E_{\mathcal{P}}^{q,w}(q, t, n):=\hspace{-5mm} \sum_{\scriptstyle
(a,n)\in\cone(\mathcal{P})\cap \ZZ^{s+1}} 
\hspace{-5mm} q^{w(a)}t^{a},\quad F_{\mathcal{P}}^{q,w}(q, t,
x):=\sum_{n=0}^\infty E_\mathcal{P}^{q,w}(q, t, n) x^n,  
\end{align*}
where $q^{w(a)}=q_{1}^{w_{1}(a)}\cdots q_{p}^{w_{p}(a)}$. 
        
\item[(2)] The \textit{$r$-weighted multivariate Ehrhart function}
and \textit{$r$-weighted multivariate Ehrhart series} of
$\mathcal{P}$ relative to $w$, denoted $E_\mathcal{P}^{r,w}$ and
$F_{\mathcal{P}}^{r,w}$ respectively, are given by       
\begin{align*}
E^{r,w}_\mathcal{P}(q, t, n):=\hspace{-7mm}
\sum_{\begin{array}{c}\scriptstyle (a,n)\in\cone(\mathcal{P})\cap
\ZZ^{s+1}\vspace{-1mm}\\   
\scriptstyle 0 \leq b_i\leq w_i(a)\ \forall\,
i\end{array}}\hspace{-7mm} t^{a}q_1^{b_1}\cdots q_p^{b_p},\quad
F_\mathcal{P}^{r,w}(q, t, x):= \sum_{n=0}^\infty
E_\mathcal{P}^{r,w}(q, t, n)x^n.    
\end{align*}
\end{enumerate}

Finally, when there is only one single weight function $w\colon
K^s\rightarrow K$, $a\mapsto w(a)$. We define the last kind of
weighting using $w$ directly:  

\begin{enumerate}
\item[(3)] The \textit{$s$-weighted  Ehrhart function} and the
\textit{$s$-weighted multivariate Ehrhart series} of $\mathcal{P}$
relative to $w$, denoted $E_\mathcal{P}^{s,w}$ and
$F_\mathcal{P}^{s,w}$ respectively, are given by       
\[
E^{s,w}_\mathcal{P}(t,n):=\sum_{\scriptstyle(a,n)\in\cone(\mathcal{P})\cap
\ZZ^{s+1}}\hspace{-7mm} w(a)t^a,\  \mbox{ and }
F_\mathcal{P}^{s,w}(t,x) :=\sum_{n=0}^\infty
E_\mathcal{P}^{s,w}(t,n)x^n.
\]
\end{enumerate}
\end{definition}

\begin{remark}
The above definitions are very general, but
note that we recover old versions directly via substitutions and
specializations.  
For the $q$-weighted multivariate Ehrhart function and series when we
set $t_j=1$: 
\begin{align*}
E_\mathcal{P}^{q,w}(q, 1, n)=\hspace{-5mm}\sum_{\scriptstyle
(a,n)\in\cone(\mathcal{P})\cap \ZZ^{s+1}} \hspace{-5mm}
q^{w(a)},\quad F_\mathcal{P}^{q,w}(q, 1, x) =\sum_{n=0}^\infty
E_\mathcal{P}^{q,w}(q, 1, n) x^n.      
\end{align*}
This type of weighting was investigated in \cite{Chapoton} (but only for
$w_i$ linear and $p=1$).  
Note further that when we set $q_j=1$, we recover the traditional
Ehrhart series.    
Regarding the $s$-weighting definition, if we set $t_j=1$, we recover
the univariate versions introduced in \cite{Brion-Vergne}, 
$E^{s,w}_\mathcal{P}(n):=\sum_{\scriptstyle(a,n)\in\cone(\mathcal{P})\cap
\ZZ^{s+1}} w(a),\  \mbox{ and }\
F_\mathcal{P}^{s,w}(x):=\sum_{n=0}^\infty E_\mathcal{P}^{s,w}(n)x^n$.   
The definition of $r$-weighting is new, but as we will see, it connects the other two definitions.
\end{remark}

Moreover, we later prove the following:

\noindent \textbf{Proposition~\ref{Prop:q_imply_r&s}}\textit{ 
Both $r$-weighted multivariate Ehrhart series and $s$-weighted
Ehrhart series can be recovered from the multivariate $q$-weighted
Ehrhart series.   
}

The questions we discuss in this paper are:
\emph{When are the weighted Ehrhart series rational functions? What
classical properties are preserved? And, when are these abstract
formal power series the Hilbert series of Ehrhart rings of some polytopes?}

\subsection*{Our Contributions:}
Our contributions have two themes: first, the combinatorics of the generating functions and the weighted Ehrhart series, and second, the construction of graded rings whose Hilbert functions are precisely the weighted Ehrhart functions.

\subsubsection*{\bf Positivity of $\mathbf{h}^*$-coefficients for q-weighted Ehrhart series.} 
Two famous results in Ehrhart theory due to Stanley are the
\textit{positivity} theorem showing that the numerator of the
rational function representing the Ehrhart series of a lattice
polytope is a polynomial $h^*(x)$ with nonnegative coefficients
\cite{Stanley-nonneg-h-vector}, and the \textit{monotonicity} theorem
for $h^*$-vectors showing that given lattice polytopes
$\mathcal{P}_1$ and $\mathcal{P}_2$ with
$\mathcal{P}_1\subset\mathcal{P}_2$, then their $h^*$-vectors satisfy
$h^*(\mathcal{P}_1)\leq h^*(\mathcal{P}_2)$ component-wise
\cite{stanley-mono}.   
Could it be that the weighted Ehrhart series we consider are rational
functions and that Stanley's results extend to the weighted Ehrhart
series? We give answers to these questions showing some
rationality and positivity-type results but we do not prove monotonicity-type theorems for
weighted Ehrhart series.

It was shown recently that Stanley's positivity and monotonicity theorems \cite{Stanley-nonneg-h-vector,stanley-mono} continue to hold for $s$-weighted Ehrhart series of rational polytopes where lattice points are counted with homogeneous polynomial weights that are sums of products of linear forms which are nonnegative on the polytope \cite{jdl3}.     

Inspired by the fact that the numerator of the Ehrhart series always has nonnegative coefficients, Chapoton investigated in \cite{Chapoton} the numerator of univariate $q$-weighted Ehrhart series and noticed that such nonnegativity result does not hold anymore.    
In Section~\ref{weighted-ehrhart-series-section}, we take a deeper look and give a sufficient, but not necessary, geometric condition to show when the nonnegativity result holds.   
\begin{definition}
For a lattice polytope $\mathcal{P} \subset \RR^{s}$ and $p$ linear functions $w_{i}\colon \RR^s \to \RR\ (i = 1, \ldots, p)$, we say a triangulation $\mathcal{T}$ of the polytope $\mathcal{P}$ is $(w_{1},\ldots,w_{p})$-compatible if every simplex $\Delta \in \mathcal{T}$ satisfies that the multiset of weight vectors on the vertices of $\Delta$ is identical. 
\end{definition}

We come to one of our positivity results.

\noindent \textbf{Theorem~\ref{compatible-theorem}.} 
\textit{If $w_{i}$'s are linear weights and a lattice polytope $\mathcal{P}$ has a $(w_{1},\ldots,w_{p})$-compatible triangulation $\mathcal{T}$, then the numerator of the rational form of its $q$-weighted Ehrhart series has positive coefficients.}  

The condition of Theorem~\ref{compatible-theorem} is only a
sufficient condition for positivity (see Example~\ref{chengyang2}).

\subsubsection*{\bf Reciprocity for $q$ and $s$ weighted Ehrhart series.}
Another famous result in Ehrhart theory, also due to Stanley, is the \textit{multivariate law of reciprocity}, an elegant functional relation showing that when inverting each variable in the integer point transform of a rational cone \cite{Beck-Ehrhart}, \cite[p.~60]{BeckRobins}, one obtains (up to sign) the integer point transform for the interior of the cone \cite{Stanley-Reciprocity}. 
The classical Ehrhart-Macdonald law of reciprocity relating the Ehrhart function and the interior Ehrhart function follows as a specialization \cite[p.~84]{BeckRobins}. 
Inspired by this and Chapoton's reciprocity result for $q$-weighted Ehrhart series for only one weight, in Section~\ref{weighted-ehrhart-series-section}, we investigate the reciprocity properties for other weighted Ehrhart series we considered.         

\begin{definition}
For a polytope $\mathcal{P}$, we define: 
\begin{enumerate}
\item the \textit{interior $q$-weighted multivariate Ehrhart series} as
$$
F_{\mathcal{P}^{\circ}}^{q,w_{1}, \ldots, w_{p}}(q, t, x) := \sum_{(a,n)\in {\cone(\mathcal{P})}^{\circ}\cap \ZZ^{s+1} } q_{1}^{-w_{1}(-a)}\cdots q_{p}^{-w_{p}(-a)}t^{a}x^{n},  
$$
\item the \textit{interior $q$-weighted Ehrhart series} as
$$
F_{\mathcal{P}^{\circ}}^{q,w_{1}, \ldots, w_{p}}(q, 1, x) := \sum_{(a,n) \in {\cone(\mathcal{P})}^{\circ}\cap \ZZ^{s+1} } q_{1}^{-w_{1}(-a)}\cdots q_{p}^{-w_{p}(-a)}x^{n},\mbox{ and } 
$$
\item the \textit{interior $s$-weighted Ehrhart series} as
$$
F_{\mathcal{P}^{\circ}}^{s,w}(t,x) := \sum_{(a,n) \in {\rm cone}(\mathcal{P})^{\circ}\cap \ZZ^{s+1} } w(-a)t^ax^{n}. 
$$
\end{enumerate}
\end{definition}

We show the following reciprocity law for the Ehrhart series in the case of the multivariate $q$-weighting of the Ehrhart series.

\noindent \textbf{Theorem~\ref{rlaw}.}
\textit{If $w_1,\ldots,w_p$ are linear weights and $\mathcal{P}$ is a polytope whose cone has dimension $s + 1$, then the $q$-weighted multivariate Ehrhart series satisfies the reciprocity property, i.e., 
$$ 
F^{q,w_{1},\ldots,w_{p}}_{\mathcal{P}}(q^{-1}, t^{-1}, x^{-1}) =(-1)^{s+1}F^{q,w_{1},\ldots,w_{p}}_{\mathcal{P}^{\circ}}(q, t, x). 
$$
} 

As a consequence, $F_{\mathcal{P}}^{q, w_{1}, \ldots, w_{p}}(q^{-1}, t^{-1}, x^{-1})$ can be represented by interior $q$-weighted multivariate Ehrhart series (Corollary~\ref{nov19-23}).

We also show the following reciprocity law in the case of the $s$-weighting of Ehrhart functions.

\noindent \textbf{Theorem~\ref{Thm:s-weightedreciprocity}.}
\textit{Let $\mathcal{P} \subset \mathbb{R}^{s}$ be a rational polytope and $h(a)=\prod_{i=1}^{s+1}\sum_{j=1}^{k_{i}}P_{ij}(a_{i})\gamma_{ij}^{a_{i}}$, where the $P_{ij}$'s are polynomials and $\gamma_{ij}$ are nonzero complex numbers. 
Then   
\begin{enumerate}
\item $F_{\mathcal{P}}^{s, h}\left(t, x \right)$ and $F_{\mathcal{P}^{\circ}}^{s,h}\left(t, x \right)$ are rational $s$-weighted multivariate Ehrhart series, and
\item they satisfy the reciprocity relation, 
\[
F_{\mathcal{P}}^{s, h}\left(t^{-1}, x^{-1} \right) =  (-1)^{s+1} F_{\mathcal{P}^{\circ}}^{s, h}\left(t, x \right).
\]
\end{enumerate}
}

\subsection*{Weighted Ehrhart rings and weight lifting polytopes}

\begin{definition}\label{wer-def} 
Let $\mathcal{P}$ be a lattice polytope in $\mathbb{R}^s$ and let
$w_{i}\colon \RR^{s} \to \RR\, (i = 1,\ldots, p)$ be $p$ weight
functions such that $w_{i}(e_{j}) \in \NN$ for $j = 1, \ldots, s$.   
We define \textit{weighted Ehrhart-rings}:
\begin{enumerate}
\item The $q$-\textit{weighted Ehrhart ring} of $\mathcal{P}$, 
denoted $A_q^w(\mathcal{P})$, is a monomial subring  given by
$$ 
A_q^w(\mathcal{P}) :=K[t^aq^{w(a)}z^{n} \mid (a,n)\in {\rm
cone}(\mathcal{P})\cap\mathbb{Z}^{s+1}]\subset S[q,z],    
$$
where $w=(w_1,\ldots,w_p)$ and $q^{w(a)}=q_{1}^{w_{1}(a)}\cdots
q_{p}^{w_{p}(a)}$. We allow the case $t_j=1$. 
\item The \textit{$r$-weighted Ehrhart ring} of $\mathcal{P}$,
denoted $A_{r}^w(\mathcal{P})$, is a monomial subring given by  
$$ 
A_{r}^w(\mathcal{P}):=K[t^{a}q_{1}^{b_{1}}\cdots q_{p}^{b_{p}}z^{n}
\mid (a,n)\in {\rm cone}(\mathcal{P})\cap\mathbb{Z}^{s+1},\, 0 \leq
b_{i} \leq w_{i}(a)\mbox{ for all }i].  
$$
\end{enumerate}
\end{definition}
 Note that by making $w_i=0$ for all $i$, $A_q^w(\mathcal{P})$
and $A_r^w(\mathcal{P})$ are equal to the classical Ehrhart ring
$A(\mathcal{P})$ of $\mathcal{P}$. If $w_1,\ldots,w_p$ are linear
functions, then the weighted Ehrhart rings are finitely generated
graded $K$-algebras (Theorems~\ref{jdl} and \ref{weighted-ehrhart}).
As we will see in Proposition \ref{chengyang}, the hypothesis that
the $w_i$'s are linear is essential to prove that
$A_q^w(\mathcal{P})$ is finitely generated.

Two of our main results describe the $q$- and $r$-weighted Ehrhart rings
using classical Ehrhart rings (Theorems~\ref{jdl} and \ref{weighted-ehrhart}). 
The strategy is to construct new polytopes $\mathcal{P}^w$ and
$\mathcal{P}_w$ (see Eqs.~\eqref{nov1-24} and \eqref{dec11-22-2}, and
Example~\ref{hope-figure}) in
a higher dimension such that the $q$- and $r$-weighted Ehrhart rings
and the classic Ehrhart rings $A(\mathcal{P}^w)$ and $A(\mathcal{P}_w)$
are correspondent, that is,
$$A_q^w(\mathcal{P})=A(\mathcal{P}^w)\ \mbox{ and }\ 
A_r^w(\mathcal{P})=A(\mathcal{P}_w).
$$       
\quad We will call the polytopes $\mathcal{P}^w$ and
$\mathcal{P}_w$ the \textit{$q$- and $r$-weight lifting polytopes}, respectively. 
The same type of $r$-weight lifting polytopes are studied in \cite[Theorem~1.1]{jdl4}.   
The difference in representing the $r$-weight lifting polytope is
that they use linear inequalities, and we use vertices.

In Section \ref{r-weighted-section}, we relate the Ehrhart
function of $\mathcal{P}_w$ with $s$-weighted Ehrhart functions when
the weight $w$ is a product $w_1\cdots w_p$ of linear functions
(Theorem~\ref{weighted-ehrhart}(d)). We determine the
dimension of $\mathcal{P}_w$ when $w$ is a monomial of $S$ 
(Proposition~\ref{nov6-22}). As a consequence, if $\mathcal{P}$ is
full-dimensional, that is $\dim(\mathcal{P})=s$, then the degree of
$E_{\mathcal{P}_w}$ is $\dim(\mathcal{P})+\deg(w)$.

\subsubsection*{\bf $\mathbf{s}$-weighted Ehrhart functions and Ehrhart series} 
In Section~\ref{s-weighted-section}, we give other applications of 
Theorem~\ref{weighted-ehrhart} and recover some results from the literature. 
If $w$ is a monomial of $S$ of degree $p$, then $E_\mathcal{P}^{s,w}$ is a
polynomial whose leading coefficient is ${\rm vol}({\mathcal{P}_w})$ and 
$$
\deg(E^{s, w}_\mathcal{P})=\dim(\mathcal{P})+p,
$$
see Proposition~\ref{nov8-22}.
If $f$ is a polynomial, writing $f$ as a $K$-linear combination of monomials, this result allows us to find $E_\mathcal{P}^{s, f}$ and $F_\mathcal{P}^{s, f}$ using  polynomial interpolation (Corollary~\ref{nonneg-poly}(a)).   

We say that $\mathcal{P}={\rm conv}(v_1,\ldots,v_m)$ is \textit{non-degenerate} if for each $1\leq i\leq s$, there is $v_j$ such that the $i$-th entry of $v_j$ is non-zero. 

\noindent \textbf{Corollary~\ref{nonneg-poly}.}
\textit{ 
Let $f$ be a non-zero polynomial of $K[t_1,\ldots,t_s]$ of degree $p$ and let $d$ be the dimension of $\mathcal{P}$.
The following holds:
\begin{enumerate}
\item[(a)] $E_\mathcal{P}^{s, f}$ is a polynomial of $n$ of degree at most $d+p$ and $F_\mathcal{P}^{s,f}(x)$ is a rational function.
\item[(b)] If $\mathcal{P}$ is non-degenerate and $f$ is a monomial, then $\deg(E_\mathcal{P}^{s, f})=d+p$. 
\item[(c)] $E_\mathcal{P}^{s, f}$ is a $K$-linear combination of Ehrhart polynomials. 
\item[(d)]\cite[Proposition~4.1]{Brion-Vergne} If the interior $\mathcal{P}^{\rm o}$ of $\mathcal{P}$ is nonempty, $K=\mathbb{R}$, and $f$ is homogeneous and $f\geq 0$ on $\mathcal{P}$, then $E_\mathcal{P}^{s, f}$ is a polynomial of degree $s+p$.    
\end{enumerate}
}

It is known that in part (d), the leading coefficient of $E_\mathcal{P}^{s, f}$ is equal to the integral $\int_\mathcal{P}f$.
This fact appears in \cite[p.~437]{baldoni-etal} and \cite[Proposition~5]{Bruns-Soger}.
Integrals of the type $\int_\mathcal{P}f$ with $f$ a polynomial and $\mathcal{P}$ a rational polytope were studied in \cite{baldoni-etal-1,barvinok,barvinok1,Bruns-Soger,lasserre}.
Algorithms and software implementation to compute this integral were developed
independently in \textit{Normaliz} \cite{normaliz2} and
\textit{LattE integrale} \cite{latte-integrale}. 

We compare $E_\mathcal{P}^{s, f}(n)$ with $E_\mathcal{P}(n)$
when $f$ is a homogeneous polynomial of degree $p$ in $S$ and
$K=\mathbb{R}$ (Proposition~\ref{ineq-ehrhart}). If $w$ is a linear
function, we use Theorem~\ref{weighted-ehrhart} linking classical and weighted Ehrhart
theories, together with Stanley's positivity and monotonicity
theorems for $h^*$-vectors of polytopes
\cite{Stanley-nonneg-h-vector,stanley-mono} to show that
$E_\mathcal{P}^{s, w}(n)$ is the difference of two classical Ehrhart 
functions and that $F_\mathcal{P}^{s, w}(x)$ is a rational
function with positive $h^*$-vector
(Corollary~\ref{linear-weighted-ehrhart}). 

In later sections, we revisit the classical results in Ehrhart theory
and see some that extend to weighted multivariate Ehrhart series
(Theorems~\ref{rlaw} and \ref{Thm:s-weightedreciprocity}).

\subsubsection*{\bf Prior work on weighted Ehrhart theory:}

We are certainly not the first to introduce or study weights.
The theory of weighted Ehrhart functions and weighted Ehrhart 
series has been developed in several papers since at least the 1990s.   
See
\cite{baldoni-etal-1,baldoni-etal,Brion-Vergne,bruns-ichim-soger,Bruns-Soger,Chapoton} 
and references therein, as well as recent articles
\cite{Bajo,jdl3,Reiner}.  In 1997 Brion and Vergne \cite{Brion-Vergne} presented a generalization of Ehrhart's theorem in the context of Euler-Maclaurin
formulas where the points are counted with ``$s$-weights'' given by a
polynomial function $w$, i.e., $E^{s,w}_\mathcal{P}(n)=\sum_{a \in
n\mathcal{P} \cap \Z^s} w(a)$.        
Since then, it has been known that when $w$ is a polynomial with integer
coefficients and $\mathcal{P}$ is a lattice polytope, the
$s$-weighted Ehrhart function is a polynomial and the $s$-weighted
Ehrhart series is the power series expansion of a rational function.    
See \cite{Brion-Vergne} and Corollary~\ref{nonneg-poly}. 
Here, the $s$-weighted Ehrhart functions will be studied in Section~\ref{s-weighted-section}. 

To the best of our knowledge, research on $q$-weightings is less
expansive, and the first paper on the subject comes from Chapoton,
who introduced the $q$-weighted Ehrhart functions \cite{Chapoton} in
one variable thinking of $q$-analogues of the Ehrhart counting methods.   
Chapoton only looked at the $q$-weights of degree $1$, thus when 
$w$ is linear. The recent paper of Reiner and Rhoades \cite{Reiner} 
also looks at $q$-analogues of the Ehrhart series that come from 
deformations of the ring of polynomials modulo the ideal of polynomials 
that vanish at the lattice points of a polytope. Like us, they care about constructing rings whose Hilbert series is exactly their new Ehrhart series. 
We seem to be the first to introduce $r$-weightings.      

For any unexplained terminology and additional information, we refer to \cite{BeckRobins,BG-book} for Ehrhart theory and \cite{BHer,Sta1,Sta5,Sta2} for commutative algebra and enumerative combinatorics.   

\section{Weighted Ehrhart series}\label{weighted-ehrhart-series-section}

We start with some useful general observations.
In the introduction, we saw that from a polytope $\mathcal{P}$, we can assign a weight(s) to the lattice points; the weights are either a number, a monomial, a polynomial, etc., depending on the method of weighting. 
From it, we saw in Definitions \ref{weightedseries} that the method of weighting gives different formal power series, but all weighted multivariate Ehrhart series can be computed from a single one.

\begin{proposition}\label{Prop:q_imply_r&s}
Both $r$-weighted multivariate Ehrhart series and $s$-weighted Ehrhart series can be recovered from $q$-weighted multivariate Ehrhart series. 
\end{proposition}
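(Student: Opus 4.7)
The plan is to exhibit two purely formal algebraic operations that convert $F_\mathcal{P}^{q,w}$ into $F_\mathcal{P}^{r,w}$ and into $F_\mathcal{P}^{s,w}$; no additional geometric input is needed, since each conversion is a coefficient-wise manipulation of the power series defining $F_\mathcal{P}^{q,w}$.

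For the $r$-from-$q$ direction, I would first collapse the inner summation over $b_{1},\ldots,b_{p}$ appearing in the definition of $F_\mathcal{P}^{r,w}$ via the finite geometric series identity $\sum_{b=0}^{N}q^{b}=(1-q^{N+1})/(1-q)$, obtaining
\[
F_\mathcal{P}^{r,w}(q,t,x)=\sum_{(a,n)\in\cone(\mathcal{P})\cap\ZZ^{s+1}}t^{a}x^{n}\prod_{i=1}^{p}\frac{1-q_{i}^{w_{i}(a)+1}}{1-q_{i}}.
\]
I would then expand $\prod_{i}(1-q_{i}^{w_{i}(a)+1})$ by inclusion--exclusion over subsets $S\subseteq\{1,\ldots,p\}$ and interchange the finite sum over $S$ with the outer lattice sum. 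Writing $w_{S}$ for the $p$-tuple obtained from $w=(w_{1},\ldots,w_{p})$ by replacing $w_{i}$ with the zero function when $i\notin S$, the net result is the clean formula
\[
F_\mathcal{P}^{r,w}(q,t,x)=\frac{1}{\prod_{i=1}^{p}(1-q_{i})}\sum_{S\subseteq\{1,\ldots,p\}}(-1)^{|S|}\Bigl(\prod_{i\in S}q_{i}\Bigr)F_\mathcal{P}^{q,w_{S}}(q,t,x),
\]
which expresses $F_\mathcal{P}^{r,w}$ explicitly as a linear combination of $q$-weighted series.

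For the $s$-from-$q$ direction I would take a single weight $w\colon K^{s}\to K$ with $w(\ZZ^{s})\subset\ZZ$, so that $q^{w(a)}$ is a well-defined Laurent monomial in a single variable $q$ and satisfies $w(a)=\frac{d}{dq}q^{w(a)}\big|_{q=1}$. Applying $\partial/\partial q$ coefficient-wise to $F_\mathcal{P}^{q,w}(q,t,x)$ and then specializing at $q=1$ yields
\[
F_\mathcal{P}^{s,w}(t,x)=\frac{\partial}{\partial q}F_\mathcal{P}^{q,w}(q,t,x)\bigg|_{q=1},
\]
recovering the $s$-weighted series from the $q$-weighted one.

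The only point requiring care is formal well-definedness: for each fixed $n$, the $n$-th level slice of $\cone(\mathcal{P})$ meets $\ZZ^{s+1}$ in finitely many points, so the coefficient of $x^{n}$ in $F_\mathcal{P}^{q,w}$ is a Laurent polynomial in $t$ and $q$. Both the inclusion--exclusion expansion and the term-by-term differentiation therefore act coefficient-wise on finite sums, and no convergence issue arises. I do not anticipate a genuine obstacle; the statement is essentially a bookkeeping verification that the $r$- and $s$-data are losslessly encoded inside the $q$-weighted series.
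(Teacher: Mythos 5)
Your proposal is correct and follows essentially the same route as the paper: both collapse the inner sum over $b_1,\ldots,b_p$ via the finite geometric series, expand $\prod_i(1-q_i^{w_i(a)+1})$ by inclusion--exclusion to express $F_\mathcal{P}^{r,w}$ as a linear combination of $q$-weighted series with subcollections of the weights, and recover $F_\mathcal{P}^{s,w}$ by differentiating in $q$ and evaluating at $q=1$ (the paper uses the operator $q\,\partial/\partial q$ rather than $\partial/\partial q$, but both agree at $q=1$). The only cosmetic difference is that you phrase the specialization by zeroing out weights $w_i$ for $i\notin S$ where the paper sets the variables $q_k=1$ for $k\notin I$; these are equivalent.
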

\begin{proof}

Note that one can rewrite the $r$-weighted multivariate Ehrhart function as follows:
\begin{align*}
E^{r,w}_\mathcal{P}(q, t, n):=\hspace{-12mm} \sum_{\begin{array}{c}\scriptstyle (a,n)\in\cone(\mathcal{P})\cap \ZZ^{s+1}\vspace{-1mm}\\ 
\scriptstyle 0 \leq b_i\leq w_i(a)\ \forall\, i\end{array}}\hspace{-12mm} t^{a}q_1^{b_1}\cdots q_p^{b_p}=\hspace{-3mm}  \sum_{\scriptstyle a\in n\mathcal{P}\cap \ZZ^{s}\vspace{-1mm}}\bigg[ \sum_{\scriptstyle 0 \leq b_i\leq w_i(a)\ \forall\, i}\hspace{-7mm}q_1^{b_1}\cdots q_p^{b_p} \bigg]t^{a}= \sum_{\scriptstyle a\in n\mathcal{P}\cap \ZZ^{s}\vspace{-1mm}}\bigg[\prod_{i=1}^{p}\bigg( \sum_{j=0}^{w_{i}(a)}q_{i}^{j} \bigg) \bigg]t^{a}.
\end{align*}

Next, note that 
$$
\prod_{i=1}^{p}\left( \sum_{j=0}^{w_{i}(a)}q_{i}^{j} \right) t^{a}x^{n} = \prod_{i=1}^{p}\left(\frac{1-q_{i}^{w_{i}(a)+1}}{1-q_{i}} \right) t^{a}x^{n} =\sum_{I \subseteq [p]} \left( \frac{\prod_{j \in I} (-q_{j})}{\prod_{i=1}^{p}(1-q_{i})} \right) \left(\prod_{j \in I}q_{j}^{w_{j}(a)} \right) t^{a}x^{n},     
$$
where we allow the case $I=\emptyset$ and by convention, the product over an empty set is $1$. For any fixed index set $I$, one has 
$$
\left(\prod_{j \in I}q_{j}^{w_{j}(a)} \right) t^{a}x^{n} =\left. q_{1}^{w_{1}(a)}\cdots q_{p}^{w_{p}(a)}t^{a}x^{n} \right|_{q_{k} = 1, k \notin I}.
$$  
Therefore, $F_\mathcal{P}^{r,w_{1},\ldots,w_{p}}(q, t, x) =\sum_{I \subseteq [p]} \left( \frac{\prod_{j \in I} (-q_{j})}{\prod_{i=1}^{p}(1-q_{i})} \right) \left( \left.F_\mathcal{P}^{q,w_{1},\ldots,w_{p}}(q, t, x)\right|_{q_{k}= 1, k \notin I} \right)$, that is,
\begin{align}
F_\mathcal{P}^{r,w_{1},\ldots,w_{p}}(q, t, x)&
=\frac{1}{\prod_{i=1}^p(1-q_i)}\sum_{k=0}^p\Big({\sum_{1\leq
j_1<\cdots< j_k\leq p}}(-q_{j_1})\cdots(-q_{j_k})F_\mathcal{P}^{q,
w_{j_1},\ldots, w_{j_k}}(q,t,x)\Big).\nonumber
\end{align}
The first term in the summation, i.e., the term corresponding to $k=0$
is 
$$
F_\mathcal{P}^{q,0}(q,t,x)=\sum_{n=0}^\infty E_\mathcal{P}^{q,0}(q,t,n)x^n=\sum_{n=0}^\infty\Big(\hspace{1mm} \sum_{\scriptstyle a\in n\mathcal{P}\cap \ZZ^{s}\vspace{-1mm}}t^a\Big)x^n.
$$
Note that
$$
\left. \left[ q \cdot \frac{\partial}{\partial q}\left(q^{w(a)}t^{a}x^{n}\right) \right] \right|_{q=1} = \left. w(a)q^{w(a)}t^{a}x^{n}\right|_{q = 1} = w(a)t^ax^{n},
$$
therefore, $F_{\mathcal{P}}^{s,w}(t,x) = \left.\left[ q \cdot \frac{\partial}{\partial q} \left( F_{\mathcal{P}}^{q,w}(q, t, x)\right) \right] \right|_{q=1}$.
\end{proof}

As the following result shows, the hypothesis in Theorem~\ref{jdl} that $w_i$ is linear for $i=1,\ldots,p$ is essential to prove that the $q$-weighted graded monomial algebra is finitely generated.

\begin{proposition}\label{chengyang} 
Let $\mathcal{P}=\{1\}$ and $w(a) = a^{2}$, then the ring 
$$
A^{w}_{q}(\mathcal{P}) =  K[q^{a^2}x^a\mid a\in\mathbb{N}]
$$
is not Noetherian. 
In particular, $A^{w}_{q}(\mathcal{P})$ is not finitely generated as a $K$-algebra.
\end{proposition}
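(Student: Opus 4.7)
The plan is to view $R := A_q^w(\mathcal{P})$ as a monomial $K$-subalgebra of $K[q,x]$ and to exhibit an explicit strictly ascending chain of ideals $I_1 \subsetneq I_2 \subsetneq \cdots$. First I would characterize the monomials of $R$ precisely: a monomial $q^K x^N$ lies in $R$ if and only if either $K = N = 0$, or there exist integers $a_1,\ldots,a_r \geq 1$ with $\sum_{i=1}^r a_i = N$ and $\sum_{i=1}^r a_i^2 = K$. The elementary bound that drives the whole argument is that, for any such composition, $\sum a_i^2 \leq N^2$ with equality only when $r = 1$; equivalently, $a^2 + b^2 < (a+b)^2$ whenever $a,b \geq 1$.

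Next, for each $n \geq 1$ set
\[
I_n := (qx,\, q^4 x^2,\, \ldots,\, q^{n^2} x^n)\,R.
\]
The inclusion $I_n \subseteq I_{n+1}$ is evident, so the goal is to show strictness by proving $q^{(n+1)^2} x^{n+1} \notin I_n$. Because $R$ is a monomial $K$-algebra, $I_n$ is $K$-spanned by monomials of the form $(q^{a^2} x^a)(q^k x^b)$ where $1 \leq a \leq n$ and $q^k x^b$ is a monomial in $R$. By linear independence of distinct monomials in $K[q,x]$, if $q^{(n+1)^2} x^{n+1}$ lay in $I_n$ it would coincide with such a product; this forces $b = n+1-a$ and $k = (n+1)^2 - a^2$, with $b \geq 1$ since $a \leq n$. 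Applying the quadratic bound from the first step to $q^k x^b \in R$ yields $k \leq b^2$, hence
\[
(n+1)^2 - a^2 \;\leq\; (n+1-a)^2, \qquad\text{i.e.,}\qquad 2a(n+1-a) \;\leq\; 0,
\]
which is impossible for $1 \leq a \leq n$. Therefore $q^{(n+1)^2}x^{n+1} \notin I_n$ and the chain is strictly ascending, showing that $R$ is not Noetherian.

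The ``in particular'' clause then follows immediately from Hilbert's basis theorem: every finitely generated commutative $K$-algebra is Noetherian, so a non-Noetherian $K$-algebra cannot be finitely generated. The main (and essentially only) technical step is the explicit monomial description of $R$ combined with the elementary quadratic inequality $\sum a_i^2 \leq \bigl(\sum a_i\bigr)^2$; no deeper machinery is needed, and the whole argument is self-contained from Definition~\ref{wer-def}. The one place to be careful is ensuring that an equality of the monomial $q^{(n+1)^2}x^{n+1}$ with an element of $I_n$ really does reduce to a single monomial product, which is handled by the monomial structure of $R$ and the linear independence of distinct monomials in $K[q,x]$.
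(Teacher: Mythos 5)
Your proposal is correct and follows essentially the same route as the paper: both exhibit the strictly ascending chain of ideals generated by $qx, q^4x^2, \ldots, q^{n^2}x^n$ and prove strictness via the observation that a monomial $q^K x^N$ of $R$ with $N\geq 1$ satisfies $K=\sum a_i^2 \leq \bigl(\sum a_i\bigr)^2=N^2$, the cross terms being the source of the contradiction. Your packaging of the contradiction as $2a(n+1-a)\leq 0$ is a slightly cleaner rendering of the same inequality the paper obtains by expanding $(n_1+\cdots+n_\ell+i)^2=n_1^2+\cdots+n_\ell^2+i^2$.
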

\begin{proof} 
A monomial $q^m x^n$ is in $A^{w}_{q}(\mathcal{P})$ if and only if there is a partition $\lambda$ of $n$, denoted $\lambda\vdash n$, such that the sum of the squares $\lambda_1^2+\lambda_2^2+\cdots$ is equal to $m$. 
Consider the following ideals of the ring $A^{w}_{q}(\mathcal{P})$
$$
I_k=(q^{i^2}x^i\mid 1\leq i\leq k),\ k\geq 1.
$$
\quad It suffices to show that $I_{k-1}\subsetneq I_k$ for $k\geq 2$.
We claim that $q^{k^2}x^k\in I_{k}\setminus I_{k-1}$. 
We argue by contradiction assuming that $q^{k^2}x^k\in I_{k-1}$. 
Then, 
$$
q^{k^2} x^k=(q^m x^n)(q^{i^2}x^i) 
$$ 
for some $q^m x^n\in A^{w}_{q}(\mathcal{P})$ and $1\leq i\leq k-1$. 
Hence, there is a partition $(n_1,\ldots,n_\ell)$ of $n$ such that 
\begin{align*} 
&n=n_1+\cdots+n_\ell,\ m=n_1^2+\cdots+n_\ell^2,\ n_i\in\mathbb{N},\\
&k=n+i=n_1+\cdots+n_\ell+i,\ k^2=m+i^2=n_1^2+\cdots+n_\ell^2+i^2,\quad \therefore\\
&(n_1+\cdots+n_\ell+i)^2=n_1^2+\cdots+n_\ell^2+i^2.
\end{align*}
\quad Hence, from the last equality, we get $n_i=0$ for $i=1,\ldots\ell$, $n=0, m=0$, and consequently $q^{k^2}x^k=q^{i^2}x^i$ for some $1\leq i\leq k-1$, a contradiction.
\end{proof}

When weights are defined by linear functions, we can conclude that all the weighted Ehrhart series we considered are rational functions, just as in the traditional case.   

\begin{proposition}\label{proposition: linearweight_rational_imply_rational}
If each weight $w_{i}(a) = v_{i}^{\intercal}a + b_{i}$ is linear, then the $q$-weighted multivariate Ehrhart series, the $q$-weighted Ehrhart series, the $r$-weighted multivariate Ehrhart series, the $r$-weighted Ehrhart series and the $s$-weighted multivariate Ehrhart series are all rational functions.
\end{proposition}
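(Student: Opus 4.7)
The plan is to reduce all five rationality claims to the $q$-weighted multivariate case, which is essentially the content of Theorem~\ref{jdl}(e), and then invoke the algebraic identities between the different flavors of Ehrhart series recorded in the proof of Proposition~\ref{Prop:q_imply_r&s}. Since rationality is preserved under finite sums, products, differentiation in $q$, and specialization at a regular value, this route requires little new work beyond what is already in the paper.

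For the $q$-weighted multivariate series, since $w_i(a) = v_i^{\intercal} a + b_i$ is affine and the constant $b_i$ contributes the same factor $q_i^{b_i}$ to every summand, I pull it out:
\[
F_\mathcal{P}^{q,w}(q,t,x) = \Big(\prod_{i=1}^{p} q_i^{b_i}\Big)\, F_\mathcal{P}^{q,w^{\mathrm{lin}}}(q,t,x), \qquad w_i^{\mathrm{lin}}(a) := v_i^{\intercal} a.
\]
This reduces the claim to the strictly linear case, where Theorem~\ref{jdl}(e) identifies the series with the integer-point transform of the $q$-weight lifting cone $\cone(\mathcal{P}^{w^{\mathrm{lin}}})$, which is rational by the classical theorem on integer-point transforms of rational cones. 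Specializing $t_j = 1$ preserves rationality and yields the $q$-weighted Ehrhart series. For the $r$-weighted multivariate series, the identity proved in Proposition~\ref{Prop:q_imply_r&s},
\[
F_\mathcal{P}^{r,w}(q,t,x) = \frac{1}{\prod_{i=1}^{p}(1-q_i)}\sum_{k=0}^p \sum_{1 \leq j_1 < \cdots < j_k \leq p}(-q_{j_1})\cdots(-q_{j_k})\, F_\mathcal{P}^{q,w_{j_1},\ldots,w_{j_k}}(q,t,x),
\]
writes it as a finite $K(q)$-linear combination of $q$-weighted series indexed by subsets of the weight system; each summand is already rational, hence so is the whole. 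Setting $t_j = 1$ then handles the $r$-weighted Ehrhart series. Finally, the identity $F_\mathcal{P}^{s,w}(t,x) = \big[\,q\, \partial_q\, F_\mathcal{P}^{q,w}(q,t,x)\,\big]\big|_{q=1}$ from the same proposition, combined with closure of rationality under differentiation and regular specialization, settles the $s$-weighted multivariate case.

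The main point requiring care, and where I expect the only real obstacle, is verifying that the partial specializations $q_k = 1$ and $q = 1$ do not meet a genuine pole of the rational function. Using the Hilbert basis description of Theorem~\ref{jdl}(d), $F_\mathcal{P}^{q,w^{\mathrm{lin}}}$ can be written with denominator $\prod_i\big(1 - q^{w^{\mathrm{lin}}(c_i)} t^{c_i} x^{d_i}\big)$; setting $q = 1$ leaves factors $1 - t^{c_i} x^{d_i}$ with $(c_i,d_i) \neq 0$, which are nonzero in $K(t,x)$. Alternatively, each coefficient of $x^n$ in $F_\mathcal{P}^{q,w}$ is a finite Laurent polynomial in $q,t$, so differentiation and evaluation at $q = 1$ can be done termwise, and the resulting power series in $x$ agrees with the rational function produced by the algebraic manipulations above, confirming rationality unambiguously.
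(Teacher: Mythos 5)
Your proof is correct, and its overall architecture is the same as the paper's: establish rationality of the $q$-weighted multivariate series first, then transfer it to the $r$- and $s$-weighted series via the identities of Proposition~\ref{Prop:q_imply_r&s}. The one place you diverge is the base case. The paper obtains rationality of $F_\mathcal{P}^{q,w}$ directly by performing the monomial substitution $t_j \mapsto q_1^{v_{1,j}}\cdots q_p^{v_{p,j}} t_j$ in the classical multivariate Ehrhart series and then multiplying by $q^{b}$; you instead route through Theorem~\ref{jdl}(e) and the integer-point transform of the weight-lifting cone $\cone(\mathcal{P}^{w^{\mathrm{lin}}})$. These are two views of the same fact, but note that Theorem~\ref{jdl} is stated under strictly narrower hypotheses than this proposition: it assumes $\mathcal{P}$ is a \emph{lattice} polytope with vertices in $\mathbb{N}^s$ and that $w_i(e_j)\in\mathbb{N}$ for all $i,j$, whereas here $\mathcal{P}$ may be rational and the coefficient vectors $v_i$ may have negative entries. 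So as written your citation does not literally cover the full generality claimed; either extend the lifting argument (the cone over $\mathcal{P}^{w^{\mathrm{lin}}}$ is still a pointed rational cone with all generators having positive last coordinate, so its integer-point transform is rational as a Laurent series), or adopt the paper's substitution argument, which sidesteps the issue entirely. On the plus side, your explicit verification that the specializations $q_k=1$ and $q=1$ avoid the poles of the rational form --- since every denominator factor $1-q^{\alpha_i}t^{c_i}x^{d_i}$ has $d_i\geq 1$ and survives the specialization --- is a point the paper leaves implicit, and your fallback termwise argument (each coefficient of $x^n$ being a finite Laurent polynomial in $q,t$) correctly justifies both the differentiation and the evaluation in the $s$-weighted step.
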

\begin{proof}
The multivariate Ehrhart series of a polytope $\mathcal{P}$ is
$$
\sum_{\scriptstyle (a,n)\in\cone(\mathcal{P})\cap \ZZ^{s+1}} t^{a}x^{n}.
$$
Recall that $t^a$ is the abbreviation of $t_{1}^{a_{1}}\cdots t_{s}^{a_{s}}$. 
We can apply (and in fact effectively compute) the following monomial substitutions: $t_{1} \mapsto q_{1}^{v_{1,1}} \cdots q_{p}^{v_{p,1}} t_{1}$, $\ldots$, $t_{s} \mapsto q_{1}^{v_{1,s}} \cdots q_{p}^{v_{p,s}} t_{s}$. Then, by the linearity of weights $w_{i}$'s,
$$
t^{a}x^{n} \mapsto q_{1}^{v_{1}^{\intercal}a}\cdots q_{p}^{v_{p}^{\intercal}a}t^{a}x^{n}.
$$
\quad Lastly, we can just multiply the series by $q_{1}^{b_{1}} \cdots q_{p}^{b_{p}}$.
The initial multivariate Ehrhart series has a rational form, then the monomial substitutions gives another rational form for the $q$-weighted multivariate Ehrhart series. 
Hence, by Proposition \ref{Prop:q_imply_r&s}, the rest of the weighted Ehrhart series are also rational functions.
\end{proof}

\subsection{Positivity of $\mathbf{h}^*$-vector}

We begin this section by giving a sufficient condition for the positivity of $h^*$-coefficients. 
Then, we give extensions of Ehrhart's reciprocity law.

\begin{theorem}\label{compatible-theorem}
If $w_{i}$'s are linear weights and a lattice polytope $\mathcal{P}$ has a $(w_{1},\ldots,w_{p})$-compatible triangulation $\mathcal{T}$, then the numerator of the rational form of its $q$-weighted Ehrhart series has positive coefficients.   
Nevertheless, this is only a sufficient condition for positivity.
\end{theorem}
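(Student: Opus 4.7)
The plan is to use a half-open decomposition of $\cone(\mathcal{P})$ induced by $\mathcal{T}$, carry the $q$-weight through the fundamental-parallelepiped formula on each piece, and then exploit the compatibility hypothesis to collapse the resulting rational functions over a common denominator. First, following the standard half-open shelling construction (see, e.g., \cite{BeckRobins}), choose for each maximal simplex $\Delta \in \mathcal{T}$ a half-open simplicial cone $C_\Delta \subseteq \cone(\Delta)$ (half-open along a coherent choice of shared facets) so that $\cone(\mathcal{P}) \cap \ZZ^{s+1} = \bigsqcup_{\Delta} (C_\Delta \cap \ZZ^{s+1})$, where the disjoint union is over maximal $\Delta$.

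Next, fix $\Delta = \conv(v_0, \ldots, v_d)$. Every integer point of $C_\Delta$ can be written uniquely as $p + \sum_{i=0}^d c_i (v_i, 1)$ with $p = (a_p, n_p)$ in the corresponding half-open fundamental parallelepiped $\Pi_\Delta \cap \ZZ^{s+1}$ and $c_i \in \NN$. The linearity of each $w_i$ gives $w\bigl(a_p + \sum c_i v_i\bigr) = w(a_p) + \sum c_i w(v_i)$, so the $q$-weighted multivariate generating function of $C_\Delta$ factors as
\begin{equation*}
\sum_{(a,n)\, \in\, C_\Delta \cap\, \ZZ^{s+1}} q^{w(a)}\, t^{a}\, x^{n} \;=\; \frac{H_\Delta(q,t,x)}{\prod_{i=0}^{d}\bigl(1 - q^{w(v_i)}\, t^{v_i}\, x\bigr)},
\end{equation*}
where $H_\Delta(q,t,x) := \sum_{(a_p,n_p)\, \in\, \Pi_\Delta \cap\, \ZZ^{s+1}} q^{w(a_p)}\, t^{a_p}\, x^{n_p}$ is a polynomial in which every monomial occurs with coefficient $1$.

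Now set $t = 1$ and sum over all maximal $\Delta \in \mathcal{T}$. By the $(w_1, \ldots, w_p)$-compatibility hypothesis, the multiset $\{w(v_0), \ldots, w(v_d)\}$ is the same for every maximal $\Delta$; call this common multiset $\{W_0, \ldots, W_d\}$. Consequently every summand acquires the identical denominator $\prod_{i=0}^{d}(1 - q^{W_i} x)$, and
\begin{equation*}
F_\mathcal{P}^{q,w}(q, 1, x) \;=\; \frac{\sum_{\Delta} H_\Delta(q, 1, x)}{\prod_{i=0}^{d}\bigl(1 - q^{W_i} x\bigr)}.
\end{equation*}
The numerator is a sum of monomials $q^{w(a_p)} x^{n_p}$ indexed (with multiplicity) by pairs $(\Delta, p)$ with $p \in \Pi_\Delta \cap \ZZ^{s+1}$, so each of its coefficients is a nonnegative integer, proving the positivity claim.

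The technical heart of the argument is the coherent choice of half-open sides along shared facets of adjacent maximal simplices of $\mathcal{T}$ so that the $C_\Delta$'s actually partition $\cone(\mathcal{P}) \cap \ZZ^{s+1}$; this is a classical construction but must be arranged with care. It is also crucial that the $w_i$ be \emph{linear}, so that $q^{w(\cdot)}$ factors across the cone generators, and implicit in the setup that $w_i(a) \in \NN$ on the lattice points of $\cone(\mathcal{P})$ so that $q^{w(a)}$ is an honest monomial rather than a Laurent monomial. The final clause of the theorem, that compatibility is only a sufficient condition, requires no separate proof: it is witnessed directly by Example~\ref{chengyang2}.
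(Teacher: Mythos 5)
Your proposal is correct and takes essentially the same approach as the paper's proof: a half-open decomposition induced by the triangulation, a fundamental-parallelepiped formula giving a numerator with nonnegative coefficients for each piece (the paper obtains this by citing \cite[Theorem~3.5]{BeckRobins}, while you unpack the computation explicitly), and the compatibility hypothesis forcing a common denominator so the numerators add. The only cosmetic difference is that you work at the level of half-open simplicial cones rather than half-open simplices of $\mathcal{P}$, and the paper additionally remarks that the resulting rational form is reduced by specializing $q=1$.
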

\begin{proof}
We can construct a disjoint partition of $\mathcal{P}$ using the triangulation $\mathcal{T}$, $\mathcal{P} = \sqcup_{\Delta \in \mathcal{T}} \Delta^{*}$ with $\Delta^{*}$ being obtained possibly by
removing several facets of $\Delta$. 
Note that the $q$-weighted Ehrhart function is additive with respect to disjoint union. 
Therefore,  
$$
E_{\mathcal{P}}^{q, w_{1}, \ldots, w_{p}}(q, 1, n) = \sum_{\Delta \in \mathcal{T} } E_{{\Delta^{*}}}^{q, w_{1}, \ldots, w_{p}}(q, 1, n),  
$$ 
and similarly,
$$
F_{\mathcal{P}}^{q, w_{1}, \ldots, w_{p}}(q, 1, n) = \sum_{\Delta \in \mathcal{T} }  F_{{\Delta^{*}}}^{q, w_{1}, \ldots, w_{p}}(q, 1, n).  
$$
\quad Since $\Delta^{*}$ is a simplex with several facets possibly removed, using \cite[Theorem~3.5]{BeckRobins}, it is easy to see that its $q$-weighted Ehrhart series has the following rational form
$$
F_{{\Delta^{*}}}^{q, w_{1}, \ldots, w_{p}}(q, 1, n) = \frac{h_{\Delta^{*}}(q, x) }{ \prod_{i=1}^{d+1}(1 - q_{1}^{w_{1}(v_{i})}\cdots q_{p}^{w_{p}(v_{i})}x)},
$$
with $v_{i}$ being the vertices of $\Delta^{*}$ and $h_{\Delta^{*}}(q, x) \in \mathbb{N}[q, x]$.

Since $\mathcal{T}$ is $(w_{1},\ldots,w_{p})$-compatible, the denominator is same for every $\Delta \in T$, without loss of generality, we can denote it as $\prod_{i=1}^{d+1}(1 - q^{\alpha_{i}}x)$, 
$$
F_{\mathcal{P}}^{q, w_{1}, \ldots, w_{p}}(q, 1, n) = \frac{ \sum_{\Delta \in \mathcal{T}}h_{\Delta^{*}}(q, x) }{ \prod_{i=1}^{d+1}(1-q^{\alpha_{i}}x) }.  
$$  
\quad To see this rational form is reduced, we can simply degenerate $q=1$ and use the classical Ehrhart theory. 
\end{proof}

\begin{example}\label{chengyang2}
Consider the polytope $\mathcal{P} = \conv((0, 0, 0), (1, 0, 0),(1, 1, 0), (1, 1, 1), (2, 1, 1))$ with the linear function $w(y_1, y_2, y_3) =y_1 + y_2 + y_3$, we find that its $q$-weighted Ehrhart series is:
\[ 
\frac{1+q^{2}x}{(1-x)(1-qx)(1-q^{3}x)(1-q^{4}x)},
\]
and the polytope $\mathcal{P}$ does not have any $w$-compatible triangulation.
\end{example}

\subsection{Reciprocity for weighted Ehrhart series}
\begin{theorem}\label{rlaw} 
If $w_1,\ldots,w_p$ are linear weights and $\mathcal{P}$ is a polytope whose cone has dimension $s+1$, then the $q$-weighted multivariate Ehrhart series satisfies the reciprocity property, i.e., 
$$ 
F^{q,w_{1},\ldots,w_{p}}_{\mathcal{P}}(q^{-1}, t^{-1}, x^{-1}) =(-1)^{s+1}F^{q,w_{1},\ldots,w_{p}}_{\mathcal{P}^{\circ}}(q, t, x). 
$$
\end{theorem}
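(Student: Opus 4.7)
The plan is to deduce this reciprocity from Stanley's classical reciprocity theorem for integer-point transforms of pointed rational cones (see \cite[Theorem~4.3]{BeckRobins}), applied to $\cone(\mathcal{P}) \subset \RR^{s+1}$, and then translated into the weighted setting by a monomial substitution that encodes the linear weights $w_i$. Recall Stanley's theorem: for any pointed rational cone $C$ of dimension $d$, the integer-point transform $\sigma_C(z) := \sum_{\alpha \in C \cap \ZZ^N} z^\alpha$ is a rational function satisfying $\sigma_C(z^{-1}) = (-1)^d \sigma_{C^\circ}(z)$. The cone $\cone(\mathcal{P}) \subset \RR^{s+1}$ is automatically pointed (since $\mathcal{P}$ is lifted to height $1$) and has dimension $s+1$ by hypothesis, so Stanley's reciprocity yields
\[ \sigma_{\cone(\mathcal{P})}(z^{-1}) = (-1)^{s+1}\,\sigma_{\cone(\mathcal{P})^{\circ}}(z). \]

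To convert this into the $q$-weighted statement, I would introduce variables $y_j := t_j \prod_{i=1}^{p} q_i^{w_i(e_j)}$ for $j=1,\ldots,s$ and $y_{s+1} := x$. Linearity of each $w_i$ gives $w_i(a) = \sum_{j=1}^{s} w_i(e_j)\,a_j$, hence $y^{(a,n)} = t^a q^{w(a)} x^n$. Summing over $\cone(\mathcal{P}) \cap \ZZ^{s+1}$ produces the identifications
\[ \sigma_{\cone(\mathcal{P})}(y) = F^{q,w}_{\mathcal{P}}(q,t,x), \qquad \sigma_{\cone(\mathcal{P})}(y^{-1}) = F^{q,w}_{\mathcal{P}}(q^{-1}, t^{-1}, x^{-1}). \]
Moreover, since each $w_i$ is linear, $-w_i(-a) = w_i(a)$, and so by the definition of the interior $q$-weighted multivariate Ehrhart series, $\sigma_{\cone(\mathcal{P})^{\circ}}(y) = F^{q,w}_{\mathcal{P}^{\circ}}(q,t,x)$. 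Substituting these three identities into Stanley's reciprocity yields exactly the reciprocity asserted in the theorem.

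The main obstacle is not conceptual but one of justification: the monomial substitution $t_j \mapsto y_j$ must be read as an identity of rational functions, not merely of formal power series. This is legitimate because Proposition~\ref{proposition: linearweight_rational_imply_rational} already establishes that $F^{q,w}_{\mathcal{P}}$ and $F^{q,w}_{\mathcal{P}^{\circ}}$ are genuine rational functions of $(q,t,x)$, and $\sigma_{\cone(\mathcal{P})}$ is a rational function of $(t,x)$ by \cite[Corollary~3.7]{BeckRobins}; hence the substitution pulls the rational-function identity on $\cone(\mathcal{P})$ back to a rational-function identity in $(q,t,x)$. Alternatively, one may carry out the same argument by working directly with the cone $\cone(\mathcal{P}^w)$ of the $q$-weight lifting polytope from Theorem~\ref{jdl}, whose integer-point transform is by construction equal to $F^{q,w}_{\mathcal{P}}(q,t,x)$ and whose dimension is again $s+1$, so that Stanley's reciprocity applies directly and produces the same sign $(-1)^{s+1}$.
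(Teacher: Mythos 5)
Your proposal is correct and follows essentially the same route as the paper: apply Stanley's reciprocity theorem to $\cone(\mathcal{P})$ and then perform the monomial substitution $t_j \mapsto q_1^{w_1(e_j)}\cdots q_p^{w_p(e_j)}t_j$, using linearity of the $w_i$ to identify the resulting series with the $q$-weighted ones (including the observation that $-w_i(-a)=w_i(a)$ for the interior series). The extra remarks on validity of the substitution at the level of rational functions and the alternative via the weight lifting polytope $\mathcal{P}^w$ are sound but not needed beyond what the paper already does.
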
 
\begin{proof} 
By assumption ${\rm cone}(\mathcal{P})$ has dimension $s+1$. 
Apply Stanley's reciprocity theorem for rational cones \cite[Theorem~4.3]{BeckRobins} to $\cone(\mathcal{P})$, which states that
$$ 
\sum_{(a,n) \in \cone(\mathcal{P}) \cap \ZZ^{s+1}} \left(t_{1}^{-1}\right)^{a_{1}} \cdots \left( t_{s}^{-1}\right)^{a_{s}}\left(x^{-1}\right)^{n}  = (-1)^{s+1}\sum_{(a,n) \in \cone(\mathcal{P})^{\circ} \cap \ZZ^{s+1}} t_{1}^{a_{1}} \cdots t_{s}^{a_{s}}x^{n},    
$$
apply the appropriate monomial substitutions: $t_{i} \mapsto q_{1}^{w_1(e_i)} \cdots q_{p}^{w_p(e_i)} t_{i}$ for $i = 1, \ldots,s$.
For $(a,n)\in \cone(\mathcal{P}) \cap \ZZ^{s+1}$, $(a,n)=( a_{1}, \ldots, a_{s}, n )$, by the linearity of weights $w_{i}$'s, we can see   
$$
\left( t_{1}^{-1}\right)^{a_{1}} \cdots \left( t_{s}^{-1}\right)^{a_{s}} \left(x^{-1}\right)^{n} \mapsto \left( q_{1}^{-1}\right)^{w_1(a)}\cdots \left( q_{p}^{-1}\right)^{w_p(a)} \left( t_{1}^{-1}\right)^{a_{1}} \cdots \left( t_{s}^{-1}\right)^{a_{s}} \left( x^{-1}\right)^{n}.     
$$

Similarly for $(a,n) \in \cone(\mathcal{P})^{\circ} \cap \ZZ^{s+1}$,
$$
t_{1}^{a_{1}} \cdots t_{s}^{a_{s}}x^{n} \mapsto q_{1}^{w_1(a)}\cdots q_{p}^{w_p(a)}t_{1}^{a_{1}} \cdots t_{s}^{a_{s}}x^n,
$$
and the asserted equality follows readily.
\end{proof}

\begin{corollary}\label{nov19-23}
$F_{\mathcal{P}}^{r, w_{1}, \ldots, w_{p}}(q^{-1}, t^{-1}, x^{-1})$ can be represented by interior $q$-weighted multivariate Ehrhart series. 
\end{corollary}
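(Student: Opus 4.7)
The plan is to combine the two main results of this section: the expansion of the $r$-weighted multivariate Ehrhart series as a rational combination of $q$-weighted multivariate Ehrhart series established in Proposition~\ref{Prop:q_imply_r&s}, together with the $q$-weighted reciprocity law of Theorem~\ref{rlaw}. These two ingredients produce, after the substitution $(q,t,x)\mapsto(q^{-1},t^{-1},x^{-1})$, an explicit representation of $F_{\mathcal{P}}^{r,w_{1},\ldots,w_{p}}(q^{-1},t^{-1},x^{-1})$ in terms of interior $q$-weighted multivariate Ehrhart series.

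First, I would recall from the proof of Proposition~\ref{Prop:q_imply_r&s} the identity
\[
F_\mathcal{P}^{r,w_{1},\ldots,w_{p}}(q, t, x)
=\frac{1}{\prod_{i=1}^p(1-q_i)}\sum_{k=0}^p\sum_{1\leq j_1<\cdots< j_k\leq p}(-q_{j_1})\cdots(-q_{j_k})\,F_\mathcal{P}^{q, w_{j_1},\ldots, w_{j_k}}(q,t,x),
\]
valid because each $w_i$ is linear. Substituting $q_i\mapsto q_i^{-1}$, $t_j\mapsto t_j^{-1}$ and $x\mapsto x^{-1}$ on both sides yields
\[
F_\mathcal{P}^{r,w_{1},\ldots,w_{p}}(q^{-1}, t^{-1}, x^{-1})
=\frac{1}{\prod_{i=1}^p(1-q_i^{-1})}\sum_{k=0}^p\sum_{1\leq j_1<\cdots< j_k\leq p}(-q_{j_1}^{-1})\cdots(-q_{j_k}^{-1})\,F_\mathcal{P}^{q, w_{j_1},\ldots, w_{j_k}}(q^{-1},t^{-1},x^{-1}).
\]

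Next, I would apply Theorem~\ref{rlaw} to each inner series: under the hypothesis that $\cone(\mathcal{P})$ has dimension $s+1$ and the $w_{j_\ell}$'s are linear, one has
\[
F_\mathcal{P}^{q, w_{j_1},\ldots, w_{j_k}}(q^{-1},t^{-1},x^{-1}) = (-1)^{s+1}F_{\mathcal{P}^{\circ}}^{q, w_{j_1},\ldots, w_{j_k}}(q,t,x).
\]
Substituting gives the desired representation
\[
F_\mathcal{P}^{r,w_{1},\ldots,w_{p}}(q^{-1}, t^{-1}, x^{-1})
=\frac{(-1)^{s+1}}{\prod_{i=1}^p(1-q_i^{-1})}\sum_{k=0}^p\sum_{1\leq j_1<\cdots< j_k\leq p}(-q_{j_1}^{-1})\cdots(-q_{j_k}^{-1})\,F_{\mathcal{P}^{\circ}}^{q, w_{j_1},\ldots, w_{j_k}}(q,t,x),
\]
which exhibits the left-hand side as an explicit rational combination of interior $q$-weighted multivariate Ehrhart series.

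The main obstacle is essentially bookkeeping: carefully tracking the signs that arise from $-q_{j_\ell}\mapsto -q_{j_\ell}^{-1}$ and verifying that the variable substitution commutes with the summation in Proposition~\ref{Prop:q_imply_r&s} (which is safe, since both sides are rational functions and one may equate them as elements of the appropriate field of rational functions). Once this is handled, no further analysis is needed: both the hypothesis that each $w_i$ is linear and that $\cone(\mathcal{P})$ has full dimension are inherited directly from Theorem~\ref{rlaw}.
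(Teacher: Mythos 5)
Your proposal is correct and follows essentially the same route as the paper: expand $F_{\mathcal{P}}^{r,w_1,\ldots,w_p}$ via the identity from Proposition~\ref{Prop:q_imply_r&s}, substitute $(q,t,x)\mapsto(q^{-1},t^{-1},x^{-1})$, and apply the reciprocity of Theorem~\ref{rlaw} term by term. The only cosmetic difference is that you write the inner series as $F_\mathcal{P}^{q,w_{j_1},\ldots,w_{j_k}}$ while the paper writes the full series restricted at $q_k=1$ for $k\notin I$; these are the same object.
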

\noindent \textit{Proof.} 
This follows from the equalities
\begin{align*}
F_\mathcal{P}^{r,w_{1},\ldots,w_{p}}(q^{-1}, t^{-1}, x^{-1}) &= \sum_{I \subseteq [p]} \left( \frac{\prod_{j \in I} (-q_{j}^{-1})}{\prod_{i=1}^{p}(1-q_{i}^{-1})} \right) \left( \left.F_\mathcal{P}^{q,w_{1},\ldots,w_{p}}(q^{-1}, t^{-1}, x^{-1})\right|_{q_{k} = 1, k \notin I} \right) \\    
&= (-1)^{s+1} \sum_{I \subseteq [p]} \left( \frac{\prod_{j \in I} (-q_{j}^{-1})}{\prod_{i=1}^{p}(1-q_{i}^{-1})} \right) \left( \left.F^{q,w_{1}, \ldots,w_{p}}_{\mathcal{P}^{\circ}}(q, t, x)\right|_{q_{k} = 1, k \notin I} \right).\Box
\end{align*}

\quad Now we look at reciprocity in the case of the $s$-weighting of the Ehrhart series. 

Let $\mathcal{P}$ be a simplex in $\mathbb{R}^s$ with vertices $v_1,\ldots,v_{s+1}$. 
We simplify notation for the rest of this section, we identify $v_i$ with $(v_i,1)$. 
With this identification $v_1,\ldots,v_{s+1}$ is a basis of $\mathbb{R}^{s+1}$. 
We denote by $\phi$, the projection $\mathbb{R}^{s+1}\mapsto\mathbb{R}$, $a=(a_i)\mapsto a_{s+1}$.       
Thus, from $\phi(z)$, we recover the dilation level of $z$.

\begin{lemma}[Naive Weighted Reciprocity Lemma]\label{Lem:NaiveReciprocity} 
Assume the following conditions. 
\begin{itemize}
\item The $\mathcal{P}$ is a simplex, i.e., the vertices of $\mathcal{P}$, $\{v_{i}\}$, forms a basis of $\RR^{s+1}$.  
\item $h(a)$ is separable and multiplicative with respect to the basis $\{v_{i}\}$, i.e., there exist univariate functions $h_{i}$ such that $h(a) = h_{1}(\alpha_{1})\cdots h_{s+1}(\alpha_{s+1})$ with $a = \alpha_{1}v_{1} + \cdots + \alpha_{s+1}v_{s+1}$.
\item $h_{i}(x) = \sum_{j = 1}^{k_{i}}P_{ij}(x)\gamma_{ij}^{x}$, where the $P_{ij}$'s are polynomials and $\gamma_{ij}\in\mathbb{C}\setminus\{0\}$.  
\end{itemize}
Then, $F_{\mathcal{P}}^{s, h}\left(t, x \right)$ and $F_{\mathcal{P}^{\circ}}^{s,h}\left(t, x \right)$ are rational $s$-weighted multivariate Ehrhart series, and they satisfy the reciprocity relation, 
$$
F_{\mathcal{P}}^{s, h}\left(t^{-1}, x^{-1} \right) =  (-1)^{s+1} F_{\mathcal{P}^{\circ}}^{s, h}\left(t, x \right).
$$
\end{lemma}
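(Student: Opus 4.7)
The plan is to exploit the simplicial structure of $\cone(\mathcal{P})$ to reduce the proof to a univariate rational-function identity about sums of the form $\sum_{n \geq 0} P(u+n) w^n$. Because $\{v_i\}_{i=1}^{s+1}$ is a basis of $\mathbb{R}^{s+1}$, I would first invoke the standard half-open parallelepiped decompositions
\[
\cone(\mathcal{P}) \cap \mathbb{Z}^{s+1} = \bigsqcup_{u \in \Pi} \bigl(u + \mathbb{N} v_1 + \cdots + \mathbb{N} v_{s+1}\bigr), \qquad \cone(\mathcal{P})^{\circ} \cap \mathbb{Z}^{s+1} = \bigsqcup_{u' \in \Pi'} \bigl(u' + \mathbb{N} v_1 + \cdots + \mathbb{N} v_{s+1}\bigr),
\]
where $\Pi$ uses basis coordinates $0 \leq \beta_i < 1$ and $\Pi'$ uses $0 < \beta_i \leq 1$. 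The reflection $u \mapsto u^* := v_1 + \cdots + v_{s+1} - u$ gives a canonical bijection $\Pi \leftrightarrow \Pi'$ sending $u_i$ to $1-u_i$.

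Using the separability of $h$ and the notation $y_i := z^{v_i}$, each series factors across the basis:
\[
F_{\mathcal{P}}^{s,h}(z) = \sum_{u \in \Pi} z^u \prod_{i=1}^{s+1} \phi_i^{(u_i)}(y_i), \qquad F_{\mathcal{P}^{\circ}}^{s,h}(z) = \sum_{u' \in \Pi'} z^{u'} \prod_{i=1}^{s+1} \psi_i^{(u'_i)}(y_i),
\]
where $\phi_i^{(u)}(y) := \sum_{n \geq 0} h_i(u+n) y^n$ and $\psi_i^{(u')}(y) := \sum_{n \geq 0} h_i(-u'-n) y^n$, the latter reflecting the $h(-a)$ convention in the interior definition. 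The polynomial-exponential form $h_i(x) = \sum_j P_{ij}(x) \gamma_{ij}^x$ immediately makes each $\phi_i^{(u)}$ and $\psi_i^{(u')}$ a rational function in $y$ (since $\sum_n n^k r^n \in \mathbb{Q}(r)$), and the finiteness of $\Pi$ and $\Pi'$ yields part (1).

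For part (2), substituting $z \mapsto z^{-1}$ and matching summands via the bijection $u \leftrightarrow u^*$ reduces the global reciprocity to the single-factor identity
\[
y^{-1}\, \phi_i^{(u)}(y^{-1}) \;=\; -\, \psi_i^{(1-u)}(y), \qquad i = 1, \ldots, s+1,
\]
whose $s+1$ signs multiply to $(-1)^{s+1}$. By linearity in $h_i$, this identity reduces to the case $h_i(x) = P(x) \gamma^x$, and after the substitution $w = 1/(\gamma y)$ it becomes the classical rational-function identity
\[
\sum_{n \geq 0} P(u+n)\, w^n \;=\; -\sum_{m \geq 1} P(u-m)\, w^{-m},
\]
which I would verify by expanding $P$ in the basis of shifted binomial coefficients $\binom{x-u+k}{k}$; both sides then equal $1/(1-w)^{k+1}$ by standard manipulations of $1/(1-w^{-1})^{k+1}$.

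The main obstacle I anticipate is the bookkeeping: keeping straight the $h(-a)$ convention in the interior definition, the reflection $u \mapsto \sum_i v_i - u$, the index shift $m = n+1$ needed to re-align the interior sum, and the $\gamma^u$ prefactors from each exponential, so that the signs and monomial prefactors $z^{u^*} = (y_1 \cdots y_{s+1}) z^{-u}$ line up precisely to produce $(-1)^{s+1}$. Once this is arranged, the argument is essentially a weighted version of Stanley's reciprocity for simplicial cones.
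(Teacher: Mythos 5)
Your proposal is correct and follows essentially the same route as the paper's proof: both decompose the simplicial cone via the fundamental (half-open) parallelepiped, use separability to factor each series into a product of $s+1$ univariate polynomial--exponential generating functions, and obtain the sign $(-1)^{s+1}$ by applying the classical univariate rational-series reciprocity $\sum_{n\geq 0}Q(n)\gamma^{n}w^{n}\mapsto -\sum_{n\geq 1}Q(-n)\gamma^{-n}w^{n}$ to each factor. The only difference is bookkeeping: you index by lattice points of the half-open parallelepiped with the explicit reflection $u\mapsto\sum_i v_i-u$ and prove the univariate identity via shifted binomials, whereas the paper indexes by fractional parts and cites the standard characterization of rational power series.
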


\noindent \textit{Proof.} 
Denote $\Diamond  = \{\sum c_{i}v_{i}: 0 \leq c_{i} <1\}$ as the fundamental parallelepiped generated by $\{v_{i}\}$.
For any $a \in  \cone{(\mathcal{P})} \cap \mathbb{Z}^{s+1}$, we can represent $ a = \alpha_{1}v_{1} +\cdots + \alpha_{s+1}v_{s+1}$. 
Decompose it into integer parts and fractional parts, we have 
$$ 
a = \underbrace{\lfloor \alpha_{1} \rfloor v_{1} +\cdots + \lfloor \alpha_{s+1} \rfloor v_{s+1} }_{\text{integral parts}} + \underbrace{ \{\alpha_{1}\}v_{1} + \cdots + \{\alpha_{s+1}\}v_{s+1}}_{\text{fractional parts}}.
$$
For simplicity, we denote the fractional parts as $r \in \Diamond$. 
Then
\begin{align*}
F_{\mathcal{P}}^{s,h}(t, x) &= \sum_{a \in  \cone{(\mathcal{P})} \cap \mathbb{Z}^{s+1}} h(a)t^{a} x^{\phi(a)} \\
&= \sum_{\alpha_{1}v_{1} +\cdots + \alpha_{s+1}v_{s+1} \in \cone{(\mathcal{P})} \cap \mathbb{Z}^{s+1}} \prod_{i=1}^{s+1}\left( h_{i}(\alpha_{i}) t^{\lfloor \alpha_{i} \rfloor v_{i}} x^{ \phi(\lfloor \alpha_{i} \rfloor v_{i} ) }\right) \cdot t^{r}x^{\phi(r)} \\
&= \sum_{r \in \Diamond}\sum_{n_{1}=0}^{\infty}\cdots\sum_{n_{s+1}=0}^{\infty}\prod_{i=1}^{s+1}\left( h_{i}(n_{i}+\{\alpha_{i}\})t^{n_{i} v_{i}} x^{ \phi( n_{i} v_{i} ) }\right) \cdot t^{r}x^{\phi(r)} \\
&= \sum_{\beta \in \Diamond} \prod_{i=1}^{s+1}\left(\sum_{n_{i}=0}^{\infty} h_{i}(n_{i}+\{\alpha_{i}\})t^{n_{i} v_{i}} x^{ \phi( n_{i} v_{i} ) } \right) \cdot t^{r}x^{\phi(r)}.
\end{align*}
Similarly, for any $a \in  \cone{(\mathcal{P}^{\circ})} \cap \mathbb{Z}^{s+1}$, we can decompose it into
$$
a = \underbrace{\lceil \alpha_{1} \rceil v_{1} +\cdots + \lceil \alpha_{s+1} \rceil v_{s+1} }_{\text{integral parts}} - \underbrace{ ( \{-\alpha_{1}\}v_{1} + \cdots + \{-\alpha_{s+1}\}v_{s+1} )}_{\text{fractional parts}}.
$$
Denote the fractional part as $r \in \Diamond$. 
Then
\begin{align*}
F_{\mathcal{P}^{\circ}}^{s,h}(t, x) &= \sum_{a \in  \cone{(\mathcal{P}^{\circ})} \cap \mathbb{Z}^{s+1}} h(-a)t^{a} x^{\phi(a)} \\
&= \sum_{\alpha_{1}v_{1} +\cdots + \alpha_{s+1}v_{s+1} \in \cone{(\mathcal{P}^{\circ})} \cap \mathbb{Z}^{s+1}}\prod_{i=1}^{s+1}\left( h_{i}(-\alpha_{i})t^{\lceil \alpha_{i} \rceil v_{i}} x^{\phi(\lceil \alpha_{i} \rceil v_{i}) }\right) \cdot t^{-r}x^{\phi(-r)} \\
&= \sum_{r \in \Diamond}\sum_{n_{1}=1}^{\infty}\cdots\sum_{n_{s+1}=1}^{\infty}\prod_{i=1}^{s+1}\left( h_{i}(-n_{i}+\{-\alpha_{i}\})t^{n_{i} v_{i}} x^{\phi(n_{i} v_{i})}\right) \cdot t^{-r}x^{\phi(-r)} \\
&= \sum_{r \in \Diamond} \prod_{i=1}^{s+1}\left(\sum_{n_{i}= 1}^{\infty} h_{i}(-n_{i}+\{-\alpha_{i}\}) t^{n_{i} v_{i}} x^{\phi(n_{i} v_{i})} \right) \cdot t^{-r}x^{\phi(-r)}.
\end{align*}
According to the assumption and the equivalent characterizations of rational power series. 
For each $r \in \Diamond$, generating functions
$$
G_{i}(t, x) := \sum_{n_{i}=0}^{\infty} h_{i}(n_{i}+\{\alpha_{i}\}) t^{n_{i} v_{i}} x^{ \phi( n_{i} v_{i} ) } \quad\text{and}\quad \overline{G_{i}}(t, x)  := \sum_{n_{i}= 1}^{\infty}h_{i}(-n_{i}+\{-\alpha_{i}\}) t^{n_{i} v_{i}} x^{ \phi( n_{i} v_{i} ) }
$$
are rational. 
In particular, they satisfy the reciprocity property, i.e.,
$$
G_{i}(t^{-1}, x^{-1}) = (-1)\overline{G_{i}}(t, x).
$$
Therefore,
\begin{align*}
F_{\mathcal{P}}^{s,h}(t^{-1}, x^{-1})  &= \sum_{r \in \Diamond} \prod_{i=1}^{s+1}\left( G_{i}(t^{-1}, x^{-1}) \right) \cdot (t^{-1})^{r}(x^{-1})^{\phi(-r)} \\
&= \sum_{r \in \Diamond} \prod_{i=1}^{s+1}\left((-1)\overline{G_{i}}(t, x) \right) \cdot t^{-r}x^{\phi(-r)} =(-1)^{s+1} F_{\mathcal{P}^{\circ}}^{s,h}(t, x). \quad \Box
\end{align*}

\begin{lemma}\label{transf}
Assume $h(a) =\prod_{i=1}^{s+1}\sum_{j=1}^{k_{i}}P_{ij}(a_{i})\gamma_{ij}^{a_{i}}$, where the $P_{ij}$'s are polynomials and $\gamma_{ij}$ are nonzero complex numbers, then $h(a)$ can be decomposed into finite terms where each term is separable and multiplicative with respect to any basis $\{v_{i}\}$ of $\RR^{s+1}$.   
\end{lemma}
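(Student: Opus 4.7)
The plan is to exhibit an explicit finite decomposition of $h(a)$ into summands, each of which factors as a product of univariate functions in the coordinates $\alpha_1,\ldots,\alpha_{s+1}$ of $a$ with respect to the given basis $\{v_i\}$, and where each univariate factor again has the form $Q(x)\mu^x$ (a special case of $\sum_j P_j(x)\gamma_j^x$) required to apply Lemma~\ref{Lem:NaiveReciprocity}. First, I expand the outer product over $i$ in the definition of $h$:
\[
h(a) \;=\; \sum_{\substack{1\le j_i\le k_i\\ 1\le i\le s+1}} \; \prod_{i=1}^{s+1} P_{i,j_i}(a_i)\,\gamma_{i,j_i}^{\,a_i},
\]
so it suffices to decompose a single summand of the form $\prod_{i=1}^{s+1} P_i(a_i)\,\gamma_i^{a_i}$, which is already separable in the \emph{standard} coordinates $a_i$.

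Next, given the basis $\{v_i\}$, write each standard coordinate as $a_i = \sum_{k=1}^{s+1} (v_k)_i\,\alpha_k$. The exponential part factors immediately in the new coordinates,
\[
\gamma_i^{\,a_i} \;=\; \prod_{k=1}^{s+1} \bigl(\gamma_i^{(v_k)_i}\bigr)^{\alpha_k},
\]
with each base $\gamma_i^{(v_k)_i}\in\CC\setminus\{0\}$. For the polynomial factor $P_i\bigl(\sum_k (v_k)_i\,\alpha_k\bigr)$, I expand it in the $\alpha_k$ as a finite sum of monomials $c\prod_k \alpha_k^{e_k}$; each such monomial is already a product of univariate polynomials in the $\alpha_k$'s.

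Substituting both expansions into the single summand and distributing across all products, I collect the result as a finite $\CC$-linear combination of terms of the shape
\[
\prod_{k=1}^{s+1} \bigl(Q_k(\alpha_k)\,\mu_k^{\alpha_k}\bigr), \qquad \mu_k := \prod_{i=1}^{s+1}\gamma_i^{(v_k)_i}\in\CC\setminus\{0\},
\]
where each $Q_k$ is a polynomial in one variable (depending on the chosen monomial in the expansion of each $P_i$). Each such term is manifestly separable and multiplicative with respect to $\{v_i\}$, and every univariate factor $h_k(x) = Q_k(x)\mu_k^x$ fits the form $\sum_j P_{kj}(x)\gamma_{kj}^x$ required by Lemma~\ref{Lem:NaiveReciprocity}, completing the proof.

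The only real obstacle is careful bookkeeping: three layers of distribution --- over $i$ at the outermost level, then over the monomials in the expansion of each $P_i$ in the new coordinates, then the final regrouping by the new coordinate index $k$ --- must be executed without mixing factors belonging to different univariate pieces. There is no analytic or algebraic difficulty, since exponentials separate because the exponent is linear in the $\alpha_k$, and multivariate polynomials are finite sums of monomials which separate trivially.
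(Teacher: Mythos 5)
Your proof is correct and follows essentially the same route as the paper's: reduce to a single term of the form (monomial in the $a_i$) $\times$ (product of exponentials), substitute $a_i=\sum_k (v_k)_i\alpha_k$, and observe that the exponential part factors over the $\alpha_k$ while the polynomial part expands into a finite sum of separable monomials. The only difference is bookkeeping order (you expand the outer product first, then the polynomials in the new coordinates; the paper reduces to a single monomial term up front), which is immaterial.
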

\noindent \textit{Proof.}
Only need to prove when $h(a) = a_{1}^{m_{1}}\cdots a_{s+1}^{m_{s+1}} \cdot \gamma_{1}^{a_{1}}\cdots \gamma_{s+1}^{a_{s+1}}$.    
Suppose $a = \alpha_{1}v_{1} + \cdots + \alpha_{s+1}v_{s+1}$, then we can simply replace $a_{i}$ by $\alpha_{1}v_{1, i}+ \cdots + \alpha_{s+1}v_{s+1, i}$ to get
\begin{align*}
h(a) &= \prod_{i=1}^{s+1}( \alpha_{1}v_{1, i}+ \cdots + \alpha_{s+1}v_{s+1, i} )^{m_{i}} \cdot \prod_{i=1}^{s+1} \gamma_{i}^{\alpha_{1}v_{1, i}+ \cdots + \alpha_{s+1}v_{s+1, i}} \\
&=\sum_{\mathbf{k}}c_{\mathbf{k}}\alpha^{\mathbf{k}}\prod_{i=1}^{s+1}(\gamma_{i}^{v_{1,1}}\cdots \gamma_{d}^{v_{i,s+1}})^{\alpha_{i}}=\sum_{\mathbf{k}}c_{\mathbf{k}}\alpha^{\mathbf{k}} \prod_{i=1}^{s+1} \hat{\gamma}_{i}^{\alpha_{1}}.\quad \Box 
\end{align*}

\begin{theorem}\label{Thm:s-weightedreciprocity}
Let $\mathcal{P} \subset \mathbb{R}^{s}$ be a rational polytope and $h(a)=\prod_{i=1}^{s+1}\sum_{j=1}^{k_{i}}P_{ij}(a_{i})\gamma_{ij}^{a_{i}}$, where the $P_{ij}$'s are polynomials and $\gamma_{ij}$ are nonzero complex numbers. 
Then, $F_{\mathcal{P}}^{s, h}\left(t, x \right)$ and $F_{\mathcal{P}^{\circ}}^{s,h}\left(t, x \right)$ are rational $s$-weighted multivariate Ehrhart series, and they satisfy the reciprocity relation, 
$$
F_{\mathcal{P}}^{s, h}\left(t^{-1}, x^{-1} \right) =  (-1)^{s+1} F_{\mathcal{P}^{\circ}}^{s, h}\left(t, x \right).
$$  
\end{theorem}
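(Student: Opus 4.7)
The plan is to reduce the general rational polytope case to the simplex case established in Lemma~\ref{Lem:NaiveReciprocity}, by combining a half-open triangulation of $\cone(\mathcal{P})$ with the weight-splitting Lemma~\ref{transf}. First, I would triangulate $\mathcal{P}$ into rational simplices $\Delta_1,\ldots,\Delta_\ell$, which lifts to a triangulation of $\cone(\mathcal{P})$ into $(s+1)$-dimensional rational simplicial cones $C_j = \cone(\Delta_j)$. Choosing a generic auxiliary direction, I would then perform the standard half-open decomposition (as in Köppe--Verdoolaege and the usual proofs of Stanley reciprocity) to write $\cone(\mathcal{P}) = \bigsqcup_{j=1}^{\ell} \widetilde{C}_j$ as a disjoint union of half-open simplicial cones, together with the corresponding dual partition $\cone(\mathcal{P})^{\circ} = \bigsqcup_{j=1}^{\ell} \widetilde{C}_j^{\mathrm{op}}$, where $\widetilde{C}_j^{\mathrm{op}}$ is obtained from $C_j$ by swapping the open/closed status of each facet relative to $\widetilde{C}_j$.

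For each fixed $j$, applying Lemma~\ref{transf} with the basis $\{v_i^{(j)}\}_{i=1}^{s+1}$ spanning $C_j$ decomposes $h$ into a finite sum of basis-separable multiplicative terms. By linearity of $F^{s,h}$ in the weight $h$, it then suffices to prove the theorem for each pure separable term on each half-open simplicial piece $\widetilde{C}_j$ separately.

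For such a pure term $h = h_1(\alpha_1)\cdots h_{s+1}(\alpha_{s+1})$, the calculation in the proof of Lemma~\ref{Lem:NaiveReciprocity} adapts to $\widetilde{C}_j$ by replacing the fundamental parallelepiped $\Diamond$ by its half-open counterpart compatible with the facet-closure pattern of $\widetilde{C}_j$. The resulting series factors as a product of $s+1$ one-variable generating functions of the form $\sum_n h_i(n+\beta)\,t^{n v_i}x^{n\phi(v_i)}$; each is a rational function because every $h_i$ is a finite sum of quasi-polynomial-exponential terms $P_{ij}(x)\gamma_{ij}^{x}$, and each satisfies the one-variable reciprocity $G_i(t^{-1},x^{-1}) = -\overline{G}_i(t,x)$. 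Multiplying $s+1$ such identities produces the global sign $(-1)^{s+1}$ and converts the series on $\widetilde{C}_j$ into the one on $\widetilde{C}_j^{\mathrm{op}}$.

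Summing over $j$ and invoking the two dual half-open partitions simultaneously yields the rationality of both $F_\mathcal{P}^{s,h}$ and $F_{\mathcal{P}^\circ}^{s,h}$ and the claimed functional equation. The main obstacle I anticipate is the facet bookkeeping: one must verify that the swap pattern $\widetilde{C}_j \leftrightarrow \widetilde{C}_j^{\mathrm{op}}$ really does partition $\cone(\mathcal{P})^{\circ}$ for the chosen generic direction (a standard but delicate point lying at the heart of Stanley-reciprocity-style arguments), and ensure that lattice points lying on lower-dimensional shared faces are attributed to exactly one piece rather than double-counted or dropped. Once this is in place, all remaining steps are routine extensions of the simplex case handled in Lemma~\ref{Lem:NaiveReciprocity}.
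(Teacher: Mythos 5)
Your proposal is correct and follows the same overall strategy as the paper: triangulate into simplicial cones, decompose $h$ via Lemma~\ref{transf} into basis-separable multiplicative terms, use linearity of the $s$-weighted series in the weight, and invoke Lemma~\ref{Lem:NaiveReciprocity}. The one genuine difference is how the simplicial pieces are glued back together. The paper uses the inclusion--exclusion principle over the closed cells of the triangulation; this forces one to apply the key lemma also to the lower-dimensional shared faces, where the reciprocity sign is $(-1)^{\dim+1}$ rather than $(-1)^{s+1}$, and one must check that these dimension-dependent signs combine correctly with the alternating inclusion--exclusion signs to produce the single global factor $(-1)^{s+1}$. You instead use a half-open decomposition, which yields honest disjoint partitions of $\cone(\mathcal{P})$ and of $\cone(\mathcal{P})^{\circ}$ and so avoids lower-dimensional pieces and sign bookkeeping entirely; the price is that Lemma~\ref{Lem:NaiveReciprocity} as stated treats only the closed simplicial cone, so you must rerun its fundamental-parallelepiped computation with the half-open parallelepiped matching the facet pattern of each $\widetilde{C}_j$ (for each removed facet the corresponding one-variable sum starts at $n_i=1$ instead of $n_i=0$, and the one-variable reciprocity still contributes a factor of $-1$ per coordinate), and you must verify the standard but delicate fact that the facet-swapped cones $\widetilde{C}_j^{\mathrm{op}}$ partition $\cone(\mathcal{P})^{\circ}$ --- which you correctly flag. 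Both routes are standard and both work; yours is cleaner on the combinatorial side (no cancellation of double-counted faces) at the cost of a mildly more elaborate version of the key lemma.
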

\begin{proof}
Using the fact that every rational polyhedral cone can be triangulated into simplicial cones and the inclusion-exclusion principle, we can assume that $\mathcal{P}$ is simplicial. 
Then, applying Lemma \ref{transf}, we can assume $h(a)$ is separable and multiplicative with respect to a basis. 
Finally, we can apply Naive Weighted Reciprocity Lemma \ref{Lem:NaiveReciprocity}.     
\end{proof}

\section{$q$-weighted Ehrhart rings}\label{q-weighting}
Let $S=K[t_1,\ldots,t_s]$ be a polynomial ring over a field $K$
containing $\mathbb{R}$ as a subfield, let $v_1,\ldots,v_m$ be points
in $\mathbb{N}^s$, let $\mathcal{P}={\rm conv}(v_1,\ldots,v_m)$, let
$w_i\colon\mathbb{R}^s\rightarrow\mathbb{R}$, $i=1,\ldots,p$, be    
linear functions such that $w_i(e_j)\in\mathbb{N}$ for all $i,j$, and
let $w\colon\mathbb{R}^s\rightarrow\mathbb{R}^p$ be the linear map
$w=(w_1,\ldots,w_p)$.   

The $q$-\textit{weight lifting polytope} of $\mathcal{P}$, denoted by
$\mathcal{P}^w$, is given by 
\begin{equation}\label{nov1-24}
\mathcal{P}^w:={\rm
conv}(\{(v_i,w(v_i))\}_{i=1}^m)\subset\mathbb{R}^{s+p}.
\end{equation} 
The linear map $\psi\colon \mathbb{R}^s\rightarrow\mathbb{R}^{s+p}$,
$a\mapsto(a,w(a))$ induces a bijective map 
$$
n\mathcal{P}\cap\mathbb{Z}^s\stackrel{\psi}{\rightarrow}
n\mathcal{P}^w\cap \mathbb{Z}^{s+p}, \mbox{ for }n\in\mathbb{N}, 
$$
and consequently $E_\mathcal{P}(n)=E_{\mathcal{P}^w}(n)$ for $n\geq 1$. 
We will give a different explanation of this equality by showing that
$A_q^w(\mathcal{P})$ is the Ehrhart ring $A(\mathcal{P}^w)$ of
$\mathcal{P}^w$ (see Eq.~\eqref{jan20-23}).

\smallskip

We have the following theorem.

\begin{theorem}\label{jdl} 
Let ${\rm cone}(\mathcal{P})$ be the cone over $\mathcal{P}={\rm conv}(\{v_i\}_{i=1}^m)$. 
The following hold.

{\rm (a)} $A_q^w(\mathcal{P})=K[t^aq^{w(a)}z^n\mid (a,n)\in {\rm cone}(\mathcal{P})\cap\mathbb{Z}^{s+1}]= K[t^aq^{w(a)}z^n\mid a\in n\mathcal{P}\cap\mathbb{Z}^{s}]$.

{\rm (b)} $A_q^w(\mathcal{P})=\bigoplus_{n=0}^\infty A_q^w(\mathcal{P})_n$ is a graded $K$-algebra whose $n$-th graded component is 
\begin{align*}
A_q^w(\mathcal{P})_n:=\sum_{\scriptstyle a\in n\mathcal{P}\cap\mathbb{Z}^{s}}\hspace{-4mm}Kt^aq^{w(a)}z^n\mbox{ and }\dim_K(A_q^w(\mathcal{P})_n)=E_\mathcal{P}(n),\, \forall\, n\geq 0.  
\end{align*}

{\rm (c)} $A(\mathcal{P}^w)=A_q^w(\mathcal{P})$ and $E_\mathcal{P}(n)=E_{\mathcal{P}^w}(n)$ for $n\geq1$.

{\rm (d)} There is a finite set $\mathcal{H}_\mathcal{P}=
\{(c_i,d_i)\}_{i=1}^r$, $c_i\in\mathbb{Z}^s$, $d_i\in\mathbb{Z}$ such that ${\rm cone}(\mathcal{P})\cap\mathbb{Z}^{s+1}=\mathbb{N}\mathcal{H}_\mathcal{P}$, 
$$
A(\mathcal{P})=K[\mathbb{N}\mathcal{H}_\mathcal{P}], \mbox{ and } A(\mathcal{P}^w)=K[t^{a}q^{w(a)}z^n\mid a\in n\mathcal{P}\cap\mathbb{Z}^s]=K[\{t^{c_i}q^{w(c_i)}z^{d_i}\}_{i=1}^r].
$$
\quad {\rm (e)} The $q$-weighted multivariate Ehrhart series $F_\mathcal{P}^{q,w}(t,q,x)$ of $\mathcal{P}$ is a rational function.
\end{theorem}

\begin{proof} 
(a) Note that ${\rm cone}(\mathcal{P})$ is the cone generated by $\{(v_i,1)\}_{i=1}^m$. The equality follows noticing that $(a,n)\in{\rm cone}(\mathcal{P})\cap\mathbb{Z}^{s+1}$ if and only if $a\in n\mathcal{P}\cap\mathbb{Z}^s$.

(b) The inclusion $A_q^w(\mathcal{P})_kA_q^w(\mathcal{P})_n\subset A_q^w(\mathcal{P})_{k+n}$, $k,n\in\mathbb{N}$, follows from the linearity of $w=(w_1,\ldots,w_p)$ and the convexity of $\mathcal{P}$. 
As the set $\{t^aq^{w(a)}z^n\mid a\in n\mathcal{P}\cap\mathbb{Z}^s\}$ has $|n\mathcal{P}\cap\mathbb{Z}^s|$ elements and is linearly independent over $K$, one has $\dim_K(A_q^w(\mathcal{P})_n)=E_\mathcal{P}(n)$.

(c) To show that $A(\mathcal{P}^w)\subset A_q^w(\mathcal{P})$ take
$t^aq^bz^n\in A(\mathcal{P}^w)$.  
Then, $(a,b,n)\in{\rm cone}(\mathcal{P}^w)\cap\mathbb{Z}^{s+p+1}$, 
$$
(a,b,n)=\sum_{i=1}^m\lambda_i(v_i,w(v_i),1),\, \lambda_i\geq 0,\
(a,n)\in {\rm cone}(\mathcal{P})\cap\mathbb{Z}^{s+1}, 
$$ 
and $b=w(a)$. 
Thus, $t^aq^bz^n=t^aq^{w(a)}z^n$ and $(a,n)\in {\rm cone}(\mathcal{P})\cap\mathbb{Z}^s$, that is, $t^aq^bz^n\in A_q^w(\mathcal{P})$. 
To show that $A(\mathcal{P}^w)\supset A_q^w(\mathcal{P})$ take $t^aq^{w(a)}z^n\in A_q^w(\mathcal{P})$. 
Then, $(a,n)\in {\rm cone}(\mathcal{P})\cap\mathbb{Z}^{s+1}$,
$$
(a,n)=\sum_{i=1}^m\lambda_i(v_i,1),\, \lambda_i\geq 0,\, \mbox{ and } w(a)=\sum_{i=1}^m\lambda_iw(v_i).
$$
\quad Then, $(a,w(a),n)=\sum_{i=1}^m\lambda_i(v_i,w(v_i),1)\in{\rm
cone}(\mathcal{P}^w)\cap\mathbb{Z}^{s+p+1}$, and $t^aq^{w(a)}z^n\in A(\mathcal{P}^w)$. 
Finally, the equality $E_\mathcal{P}(n)=E_{\mathcal{P}^w}(n)$ for
$n\geq 0$ follows from 
$A(\mathcal{P}^w)=A_q^w(\mathcal{P})$ and part (b).

(d) The existence of $\mathcal{H}_\mathcal{P}$ satisfying ${\rm cone}(\mathcal{P})\cap\mathbb{Z}^{s+1}=\mathbb{N}\mathcal{H}_\mathcal{P}$ and $A(\mathcal{P})=K[\mathbb{N}\mathcal{H}_\mathcal{P}]$ follows from \cite[Theorem~9.3.6]{monalg-rev}. 
The other expressions for $A(\mathcal{P}^w)$ follow from parts (a) and (c). 

(e) By the proof of part (c), $(a,b,n)\in {\rm cone}({\mathcal{P}^w})\cap\mathbb{Z}^{s+p+1}$ if and only if $(a,n)\in {\rm cone}(\mathcal{P})\cap\mathbb{Z}^{s+1}$ and $b=w(a)$. 
Then, by \cite[Corollary~3.7]{BeckRobins}, the integer-point
transform of ${\rm cone}({\mathcal{P}^w})$ is a rational function given by
\begin{align*}
\frac{G(t,q,x)}{\prod_{i=1}^{\ell}(1-t^{\alpha_i}q^{\beta_i}x^{n_i})}=\sum_{\scriptstyle
(a,b,n)\in{\rm cone}({\mathcal{P}^w})\cap \ZZ^{s+p+1}} \hspace{-5mm} t^{a}q^bx^n  
=\sum_{\scriptstyle (a,n)\in {\rm cone}(\mathcal{P})\cap \ZZ^{s+1}}
\hspace{-5mm} t^{a}q^{w(a)}x^n=F_\mathcal{P}^{q,w}(t,q,x). 
\end{align*}
\quad Thus, $F_\mathcal{P}^{q,w}(t,q,x)$ is a rational function. 
\end{proof}

\section{$r$-weighted Ehrhart rings}\label{r-weighted-section}

In this section, we prove one of our main results 
and show some applications to weighted Ehrhart theory. 
Let $S=K[t_1,\ldots,t_s]$ be a polynomial ring over a field $K$ containing $\mathbb{R}$ as a subfield, let $v_1,\ldots,v_m$ be points in $\mathbb{N}^s$, let $\mathcal{P}={\rm conv}(v_1,\ldots,v_m)$, let $w_i\colon\mathbb{R}^s\rightarrow\mathbb{R}$, $i=1,\ldots,p$, be
linear functions such that $w_i(e_j)\in\mathbb{N}$ for all $i,j$, and
let $w\colon\mathbb{R}^s\rightarrow\mathbb{R}^p$ be the linear map
given by $w=(w_1,\ldots,w_p)$. 

There is a rich relationship between Ehrhart functions and
commutative algebra, which we briefly recall here (see
\cite{Reiner,Sta1,Sta2,monalg-rev} for all details).    
Let $\mathcal{P}$ be a lattice polytope in $\RR^s$ of dimension $d$
and let $B_\mathcal{P}$ be as in Eq.~\eqref{nov2-24}.   
We are interested in the finitely generated  affine semigroup
${\rm cone}(\mathcal{P})\cap\mathbb{Z}^{s+1}$, where
${\rm cone}(\mathcal{P})$ denotes the polyhedral cone
$\mathbb{R}_+B_\mathcal{P}$ generated by
$B_\mathcal{P}$.    

The
resulting monomial algebras are often called \emph{toric algebras}
and are core to the theory of toric varieties
\cite{BG-book,cox-toric-book,Stur1}; their generating sets come from
Hilbert bases, and their Hilbert functions are investigated by
several researchers \cite{BG-book,BHer,Sta1,Sta2}.      

On the algebraic side, the \textit{Ehrhart ring} of
$\mathcal{P}$ is the monomial ring of
$S[z]$ given by    
\begin{equation}\label{jan20-23}
A(\mathcal{P}):=K[t^az^n\mid a\in
n\mathcal{P}\cap\mathbb{Z}^s]=K[t^az^n\mid (a,n)\in 
{\rm cone}(\mathcal{P})\cap\mathbb{Z}^{s+1}]\subset S[z],   
\end{equation}
where $z$ is a \emph{grading} variable. 
The Ehrhart ring $A(\mathcal{P})$ has a natural $\ZZ$-grading given by  
\[
A(\mathcal{P})=\bigoplus_{n=0}^\infty A(\mathcal{P})_n,
\]
where $t^az^n\in A(\mathcal{P})_n$ if and only if $a\in
n\mathcal{P}\cap\mathbb{Z}^s$.
The $r$-weighted Ehrhart ring of $\mathcal{P}$ has the same graded
ring structure as $A(\mathcal{P})$. 
The ring $A_r^w(\mathcal{P})=\bigoplus_{n=0}^\infty
A_r^w(\mathcal{P})_n$ is a graded $K$-algebra whose $n$-th graded component is  
\begin{align}\label{eq-rgrading}
A_r^w(\mathcal{P})_n=\sum_{
\begin{array}{c}
\scriptstyle a\in n\mathcal{P}\cap\mathbb{Z}^{s}\vspace{-1mm}\\
\scriptstyle 0\leq b_i\leq w_i(a)
\end{array} 
}\hspace{-4mm}Kt^aq_1^{b_1}\cdots q_p^{b_p}z^n
\end{align}
and, by the following Lemma~\ref{jan7-23}, the dimension 
of $A_r^w(\mathcal{P})_n$ as a $K$-vector space is 
\begin{align*}
\dim_K(A_r^w(\mathcal{P})_n)&=E_\mathcal{P}^{s, \scriptstyle\prod_{i=1}^p(w_i+1)}(n),
\end{align*}
where $E_\mathcal{P}^{s, f}$ is the $s$-weighted Ehrhart function of
$f=\prod_{i=1}^p(w_i+1)$. 

\begin{lemma}\label{jan7-23} Let ${w}_i\colon K^s\rightarrow K$, $i=1,\ldots,p$, be linear functions such that $w_i(e_j)\in\mathbb{N}$ for all $i,j$. 
If $w=w_1\cdots w_p$ and $A_r^w(\mathcal{P})=\bigoplus_{n=0}^\infty A_r^w(\mathcal{P})_n$ is the $r$-weighted Ehrhart ring of $\mathcal{P}$, then
\begin{align*}
\dim_K(A_r^w(\mathcal{P})_n)=E_\mathcal{P}(n)+\Big(\sum_{i=1}^pE_\mathcal{P}^{s, w_i}(n)\Big) + \Big({\sum_{1\leq j_1< j_2\leq p}}E_\mathcal{P}^{s, w_{j_1}w_{j_2}}(n)\Big)+\cdots+E_\mathcal{P}^{s, w_1\cdots w_p}(n).\nonumber
\end{align*}
\end{lemma}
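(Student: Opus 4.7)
The plan is to compute $\dim_K(A_r^w(\mathcal{P})_n)$ directly from its basis description in Eq.~\eqref{eq-rgrading} and then expand. By definition, the $n$-th graded component is
\[
A_r^w(\mathcal{P})_n=\sum_{\substack{a\in n\mathcal{P}\cap\mathbb{Z}^{s}\\ 0\leq b_i\leq w_i(a)}}Kt^aq_1^{b_1}\cdots q_p^{b_p}z^n.
\]
The monomials $\{t^aq_1^{b_1}\cdots q_p^{b_p}z^n\}$ appearing in the sum are pairwise distinct and hence $K$-linearly independent, so they form a $K$-basis of $A_r^w(\mathcal{P})_n$. Therefore $\dim_K(A_r^w(\mathcal{P})_n)$ equals the cardinality of the index set of pairs $(a,b)$.

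Next I would count this cardinality by summing fiberwise over $a$. Since $\mathcal{P}\subset\mathbb{R}_+^s$ (the vertices $v_i$ lie in $\mathbb{N}^s$) and each $w_i$ is linear with $w_i(e_j)\in\mathbb{N}$, any $a\in n\mathcal{P}\cap\mathbb{Z}^s$ satisfies $a\in\mathbb{N}^s$ and hence $w_i(a)\in\mathbb{N}$. Thus for each such $a$, the number of integer vectors $(b_1,\ldots,b_p)$ with $0\leq b_i\leq w_i(a)$ equals $\prod_{i=1}^p(w_i(a)+1)$, giving
\[
\dim_K(A_r^w(\mathcal{P})_n)=\sum_{a\in n\mathcal{P}\cap\mathbb{Z}^s}\prod_{i=1}^p\bigl(w_i(a)+1\bigr).
\]

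The final step is to expand the product and interchange sums. Using distributivity,
\[
\prod_{i=1}^p\bigl(w_i(a)+1\bigr)=\sum_{S\subseteq[p]}\prod_{i\in S}w_i(a),
\]
with the convention that the empty product is $1$. Summing over $a\in n\mathcal{P}\cap\mathbb{Z}^s$ and swapping the order yields
\[
\dim_K(A_r^w(\mathcal{P})_n)=\sum_{S\subseteq[p]}\ \sum_{a\in n\mathcal{P}\cap\mathbb{Z}^s}\prod_{i\in S}w_i(a)=\sum_{S\subseteq[p]}E_\mathcal{P}^{s,\,\prod_{i\in S}w_i}(n),
\]
where $E_\mathcal{P}^{s,1}(n)=E_\mathcal{P}(n)$ when $S=\emptyset$. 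Grouping the subsets $S$ by their cardinality $k=0,1,\ldots,p$ gives exactly the claimed expression, which is the expansion of $E_\mathcal{P}^{s,\prod_{i=1}^p(w_i+1)}(n)$.

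There is no real technical obstacle here; the proof is essentially a bookkeeping argument. The only point that requires a small observation is the non-negativity $w_i(a)\in\mathbb{N}$ needed to guarantee that the fiber over $a$ has cardinality $\prod_i(w_i(a)+1)$, which follows immediately from the hypothesis $w_i(e_j)\in\mathbb{N}$ together with the standing assumption that $\mathcal{P}$ sits in $\mathbb{R}_+^s$.
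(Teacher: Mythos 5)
Your proof is correct and follows essentially the same route as the paper: count the fiber over each $a\in n\mathcal{P}\cap\mathbb{Z}^s$ as $\prod_{i=1}^p(w_i(a)+1)$, sum over $a$, and expand the product into the sum over subsets of $[p]$. The only addition is your explicit remark that $w_i(a)\in\mathbb{N}$ (needed for the fiber count), which the paper leaves implicit.
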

\begin{proof}  
The $n$-th graded component of $A_r^w(\mathcal{P})$ is given by
Eq.~\eqref{eq-rgrading}.
To determine the dimension of $A_r^w(\mathcal{P})_n$, note that for
each $a\in n\mathcal{P}\cap\mathbb{Z}^{s}$, the set  
$$
\{t^aq^bz^n\mid b=(b_1,\ldots,b_p)\in([0,w_1(a)]\times\cdots\times[0,w_p(a)])\cap\mathbb{Z}^{p}\}
$$
has $\prod_{i=1}^p(w_i(a)+1)$ elements. 
Then
\begin{align}
\dim_K(A_r^w(\mathcal{P})_n)&=\sum_{a\in n\mathcal{P}\cap\mathbb{Z}^{s}}\,\,\prod_{i=1}^p(w_i(a)+1)= E_\mathcal{P}^{s, \scriptstyle\prod_{i=1}^p(w_i+1)}(n)\nonumber\\
&=\sum_{k=0}^p\Big({\sum_{1\leq j_1<\cdots< j_k\leq p}}E_\mathcal{P}^{s, w_{j_1}\cdots w_{j_k}}(n)\Big)\nonumber\\
&=E_\mathcal{P}(n)+\Big(\sum_{i=1}^pE_\mathcal{P}^{s, w_i}(n)\Big)+\Big({\sum_{1\leq j_1< j_2\leq p}}E_\mathcal{P}^{s, w_{j_1}w_{j_2}}(n)\Big)+\cdots+E_\mathcal{P}^{s, w_1\cdots w_p}(n),\nonumber
\end{align}
and the proof is complete.
\end{proof}

The following lemmas will be used to show Theorem~\ref{weighted-ehrhart}.

\begin{lemma}\label{nov100} Let $\delta,\eta_1,\ldots,\eta_q$ be a sequence of nonnegative integers. 
If $\eta_1+\cdots+\eta_q\geq \delta$, then there are integers $\alpha_1,\ldots,\alpha_q$ such that $\alpha_1+\cdots+\alpha_q=\delta$ and $0\leq\alpha_i\leq \eta_i$ for all $i$.
\end{lemma}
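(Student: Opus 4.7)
The plan is to prove this by a straightforward greedy construction, formalized by induction on $q$. The statement is essentially a discrete intermediate value property for sums, and no subtle idea is needed; I expect the main work to be bookkeeping the inequalities correctly.

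For the base case $q=1$, the hypothesis $\eta_1 \geq \delta$ lets us simply set $\alpha_1 = \delta$, which satisfies $0 \leq \alpha_1 \leq \eta_1$. For the inductive step, assume the statement for sequences of length $q-1$, and suppose $\eta_1 + \cdots + \eta_q \geq \delta$ with all terms in $\mathbb{N}$. Define $\alpha_1 := \min(\delta, \eta_1)$, so $0 \leq \alpha_1 \leq \eta_1$ is immediate. Set $\delta' := \delta - \alpha_1 \geq 0$. I would then verify the residual hypothesis $\eta_2 + \cdots + \eta_q \geq \delta'$ by splitting into the two cases of the minimum: if $\alpha_1 = \delta$ then $\delta' = 0$ and the inequality is trivial; if $\alpha_1 = \eta_1$ then $\delta' = \delta - \eta_1 \leq (\eta_1 + \cdots + \eta_q) - \eta_1 = \eta_2 + \cdots + \eta_q$. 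By the inductive hypothesis applied to $(\delta'; \eta_2,\ldots,\eta_q)$, we obtain $\alpha_2,\ldots,\alpha_q$ with $\alpha_2 + \cdots + \alpha_q = \delta'$ and $0 \leq \alpha_i \leq \eta_i$ for $i \geq 2$, and concatenating gives the desired decomposition.

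There is no real obstacle here; the only care required is to handle the two branches of the $\min$ cleanly so that the reduced problem continues to satisfy the hypothesis of the lemma. An alternative framing, if preferred for brevity, is to argue directly: letting $k$ be the largest index with $\eta_1 + \cdots + \eta_{k-1} < \delta$ (with $k=1$ if no such prefix exists), one sets $\alpha_i = \eta_i$ for $i < k$, $\alpha_k = \delta - (\eta_1 + \cdots + \eta_{k-1})$, and $\alpha_i = 0$ for $i > k$; the bound $\alpha_k \leq \eta_k$ then follows from the definition of $k$. Both arguments are elementary and of comparable length, so I would use whichever fits the surrounding exposition best.
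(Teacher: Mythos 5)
Your proof is correct. It does, however, take a different route from the paper: the paper inducts on $\delta$, noting that if $\delta\geq 1$ then some $\eta_j\geq 1$, applying the inductive hypothesis to $(\delta-1;\eta_1,\ldots,\eta_j-1,\ldots,\eta_q)$, and then incrementing $\alpha_j'$ by one; you instead induct on $q$ with the greedy choice $\alpha_1=\min(\delta,\eta_1)$ (or, equivalently, your direct prefix-sum formulation). Both arguments are elementary and complete; your case split on the two branches of the minimum correctly verifies the residual hypothesis $\eta_2+\cdots+\eta_q\geq\delta'$, and the base case $q=1$ is handled properly. If anything, the greedy version is slightly more constructive in that it produces the decomposition in a single left-to-right pass, whereas the paper's induction on $\delta$ conceptually peels off one unit at a time; the paper's version has the minor advantage of treating all indices symmetrically rather than privileging an ordering. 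Either proof would serve equally well here.
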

\begin{proof}
We proceed by induction on $\delta\geq 0$. 
If $\delta=0$, we set $\alpha_i=0$ for $i=1,\ldots,q$. 
Assume that $\delta\geq 1$ and $\eta_1+\cdots+\eta_q\geq\delta$. 
Then, $\eta_j\geq 1$ for some $j$ and 
$$
\eta_1+\cdots+\eta_{j-1}+(\eta_j-1)+\eta_{j+1}+\cdots+\eta_q\geq\delta-1.
$$
\quad Then, by induction, there are integers $\alpha_1',\ldots,\alpha_q'$ such that $\delta-1=\alpha_1'+\cdots+\alpha_q'$, $0\leq\alpha_i'\leq \eta_i$ for $i\neq j$, and $\alpha_j'\leq \eta_j-1$. Setting $\alpha_i=\alpha_i'$ for $i\neq j$ and $\alpha_j=\alpha_j'+1$, we obtain $\delta=\alpha_1+\cdots+\alpha_q$ and $0\leq\alpha_i\leq \eta_i$ for $i=1,\ldots,q$.
\end{proof}

\begin{lemma}\label{oct22-22} 
Let $\delta,\epsilon,\mu$ be positive integers. 
If $\delta\leq\epsilon\mu$, then there are positive integers $\epsilon_i,\mu_i$, $i=1,\ldots,r$, such that $\delta=\sum_{i=1}^r\epsilon_i\mu_i$, $\epsilon\geq\sum_{i=1}^r\epsilon_i$, and $1\leq \mu_i\leq\mu$ for $i=1,\ldots,r$.
\end{lemma}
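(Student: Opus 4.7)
\textbf{Proof proposal for Lemma~\ref{oct22-22}.} The plan is to give a short constructive proof via the division algorithm, using at most two summands. Write $\delta = q\mu + \rho$ with $0 \leq \rho < \mu$. The idea is that repeated copies of $\mu$ account for the bulk of $\delta$ (each copy consuming one unit of $\epsilon$), and the leftover $\rho$, if nonzero, forms a single extra term of the form $1 \cdot \rho$ (consuming one more unit of $\epsilon$). The hypothesis $\delta \leq \epsilon\mu$ is precisely what makes the budget $\epsilon$ sufficient.

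I would split into three cases. First, if $\delta \leq \mu$, take $r = 1$, $\epsilon_1 = 1$, $\mu_1 = \delta$; then $\delta = \epsilon_1 \mu_1$, $\epsilon_1 = 1 \leq \epsilon$, and $1 \leq \mu_1 \leq \mu$. Second, if $\delta > \mu$ and $\rho = 0$, take $r = 1$, $\epsilon_1 = q$, $\mu_1 = \mu$; here $q \geq 1$, and $q\mu = \delta \leq \epsilon\mu$ yields $\epsilon_1 = q \leq \epsilon$. Third, if $\delta > \mu$ and $\rho \geq 1$, take $r = 2$, $(\epsilon_1,\mu_1) = (q,\mu)$ and $(\epsilon_2,\mu_2) = (1,\rho)$; then $\delta = q\mu + \rho = \epsilon_1 \mu_1 + \epsilon_2 \mu_2$, the bounds $1 \leq \mu_1 = \mu$ and $1 \leq \mu_2 = \rho \leq \mu - 1$ are immediate, and from $q\mu + \rho \leq \epsilon\mu$ with $\rho \geq 1$ we get $q < \epsilon$, hence $\epsilon_1 + \epsilon_2 = q + 1 \leq \epsilon$.

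There is essentially no obstacle: the only point requiring a little care is that the statement demands each $\epsilon_i, \mu_i$ to be a \emph{positive} integer, which forces the three-case split (otherwise in Case~1 one would be tempted to write $\delta = 0 \cdot \mu + \delta$, but $\epsilon_1 = 0$ would be disallowed). Once this bookkeeping is made, the inequality $q + 1 \leq \epsilon$ in the last case is the only numerical check, and it follows immediately from $\rho \geq 1$ together with the hypothesis $\delta \leq \epsilon\mu$.
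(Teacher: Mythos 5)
Your proof is correct, and it takes a genuinely different route from the paper's. The paper argues by induction on $\epsilon$: if $\delta\le\mu$ it takes the single term $1\cdot\delta$, and otherwise it peels off one copy of $\mu$ (as a term $1\cdot\mu$) and applies the inductive hypothesis to $\delta-\mu\le(\epsilon-1)\mu$; the resulting decomposition has all $\epsilon_i=1$ and roughly $\lfloor\delta/\mu\rfloor+1$ summands. You instead use the division algorithm $\delta=q\mu+\rho$ to produce the decomposition in one shot, grouping the $q$ copies of $\mu$ into the single term $q\cdot\mu$ and handling the remainder as $1\cdot\rho$. All your case checks are sound: in the second case $q\mu=\delta\le\epsilon\mu$ gives $q\le\epsilon$, and in the third case $\rho\ge1$ forces $q\mu<\epsilon\mu$, hence $q\le\epsilon-1$ and $q+1\le\epsilon$. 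What your approach buys is a sharper, fully explicit statement — $r\le 2$ always suffices — and no induction; what the paper's approach buys is a template that mirrors the inductive style of the neighboring Lemmas (such as Lemma~\ref{nov100} and Lemma~\ref{sep15-22}) and of the main argument in Theorem~\ref{weighted-ehrhart} where the lemma is applied. Either version is fully adequate for that application, since the theorem only needs the existence of some such decomposition.
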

\begin{proof} We argue by induction on $\epsilon\geq 1$. 
If $\epsilon=1$, we set $\epsilon_1=1$, $\mu_1=\delta$, and $r=1$. 
Assume that $\epsilon>1$ and $\delta\leq\epsilon\mu$. 
If $\delta\leq \mu$, we set $\epsilon_1=1$, $\mu_1=\delta$, and $r=1$. 
Thus, we may assume that $\delta>\mu$. Then, $1\leq \delta-\mu\leq (\epsilon-1)\mu$ and, by induction, there are positive integers $\epsilon_i,\mu_i$, $i=1,\ldots,q$, such that $\delta-\mu=\sum_{i=1}^q\epsilon_i\mu_i$, $\epsilon-1\geq\sum_{i=1}^q\epsilon_i$, and $1\leq \mu_i\leq\mu$ for $i=1,\ldots,q$. 
To complete the induction process, we set $r=q+1$, $\epsilon_r=1$ and $\mu_r=\mu$. 
\end{proof}

\begin{lemma}\label{sep15-22} 
Let $r$ be a positive integer and let $n,\delta,\epsilon_1,\ldots,\epsilon_r,w_1,\ldots,w_r$ be positive real $($resp. rational$)$ numbers. 
If $\delta\leq\sum_{i=1}^rn\epsilon_iw_i$, then there are nonnegative real $($resp. rational$)$ numbers $\mu_1,\ldots,\mu_r$ such that $\delta=\sum_{i=1}^rn\epsilon_i\mu_i$ and $0\leq \mu_i\leq w_i$ for $i=1,\ldots,r$.
\end{lemma}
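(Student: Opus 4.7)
The plan is to give a direct constructive argument by uniform scaling, avoiding induction entirely (which would parallel Lemma~\ref{nov100} but is unnecessary in the continuous setting). The key observation is that the set of admissible $(\mu_1,\ldots,\mu_r)$ with $0\leq\mu_i\leq w_i$ forms a box, and the linear functional $(\mu_1,\ldots,\mu_r)\mapsto\sum_{i=1}^r n\epsilon_i\mu_i$ attains every value from $0$ up to $\sum_{i=1}^r n\epsilon_i w_i$ on this box, by the intermediate value theorem applied along a line segment. Concretely, one simply rescales the upper bounds.

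First, set $T:=\sum_{i=1}^r n\epsilon_i w_i$. By hypothesis $0<\delta\leq T$, so $\lambda:=\delta/T$ satisfies $0<\lambda\leq 1$. Then I would define
\[
\mu_i:=\lambda w_i\quad\text{for } i=1,\ldots,r.
\]
Since $\lambda\in(0,1]$ and $w_i>0$, clearly $0\leq\mu_i\leq w_i$. Moreover,
\[
\sum_{i=1}^r n\epsilon_i\mu_i=\lambda\sum_{i=1}^r n\epsilon_i w_i=\lambda T=\delta,
\]
which gives the required decomposition.

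For the rational case, note that if $n$, $\delta$, $\epsilon_i$, $w_i$ are all rational, then $T$ is rational, so $\lambda=\delta/T\in\mathbb{Q}$ (the hypothesis that all $\epsilon_i,w_i$ are positive ensures $T\neq 0$, so no division issue arises), and hence each $\mu_i=\lambda w_i$ is rational. This preserves the ``resp. rational'' clause without any additional work.

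There is essentially no obstacle here: unlike the integer version in Lemma~\ref{nov100}, where the discreteness forces a case analysis and induction on $\delta$, the real/rational version is free of divisibility issues because we can subdivide any positive quantity arbitrarily. The only mild point to double-check is that the hypothesis $\epsilon_i>0$ (all strictly positive) together with $n>0$ makes $T>0$, so $\lambda$ is well-defined; this is precisely what the statement guarantees.
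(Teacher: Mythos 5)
Your proof is correct. It is, however, a genuinely different argument from the one in the paper: the paper proceeds by induction on $r$ with a greedy allocation --- if $\delta\leq n\epsilon_1w_1$ it puts everything on the first coordinate by setting $\mu_1=\delta/(n\epsilon_1)$ and $\mu_i=0$ for $i\geq 2$; otherwise it saturates $\mu_1=w_1$ and recurses on $\delta-n\epsilon_1w_1\leq\sum_{i=2}^rn\epsilon_iw_i$. Your uniform rescaling $\mu_i=\lambda w_i$ with $\lambda=\delta/T$ is shorter and arguably more transparent, exploiting the fact that the feasible set is a box on which the linear functional sweeps out the full interval $[0,T]$; it handles the rational clause with the same one-line observation. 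What the paper's version buys is stylistic parallelism with the two integer lemmas that precede it (Lemmas~\ref{nov100} and \ref{oct22-22}), where discreteness genuinely forces a case analysis and your scaling trick is unavailable; the greedy induction also produces a solution supported on an initial segment of indices (all but one coordinate either saturated or zero), though nothing in the application to Theorem~\ref{weighted-ehrhart} requires that extra structure. Your observation that the hypotheses $\epsilon_i,w_i,n>0$ guarantee $T>0$, so that $\lambda$ is well defined, is exactly the right point to check, and it holds.
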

\begin{proof} 
We proceed by induction on $r$. 
If $r=1$, setting $\mu_1=\delta/(n\epsilon_1)$, one has $\delta=n\epsilon_1\mu_1$ and $\mu_1=\delta/(n\epsilon_1)\leq w_1$. 
Assume that $r>1$ and $\delta\leq\sum_{i=1}^rn \epsilon_i w_i$.  
If $\delta\leq n \epsilon_1 w_1$, setting $\mu_1=\delta/(n\epsilon_1)$ and $\mu_i=0$ for $i=2,\ldots,r$, one has $\delta=\sum_{i=1}^rn\epsilon_i\mu_i$ and $0\leq \mu_i\leq w_i$ for $i=1,\ldots,r$. 
If $\delta> n \epsilon_1 w_1$, then $0<\delta-n \epsilon_1 w_1\leq \sum_{i=2}^rn \epsilon_i w_i$. 
Hence, by induction, there are nonnegative numbers $\mu_2,\ldots,\mu_r$ such that $\delta-n \epsilon_1 w_1=\sum_{i=2}^rn\epsilon_i\mu_i$ and $0\leq \mu_i\leq w_i$ for $i=2,\ldots,r$. 
Then, the result follows by setting $\mu_1= w_1$.
\end{proof}

\begin{lemma}\label{image-cube} 
Let $w_1,\ldots,w_d$ be nonnegative real numbers and let $T\colon\mathbb{R}^d\mapsto\mathbb{R}^d$ be the linear map such that $T(e_i)=w_ie_i$ for $i=1,\ldots,d$. 
Then, $T([0,1]^d)=[0,w_1]\times\cdots\times[0,w_d]$.
\end{lemma}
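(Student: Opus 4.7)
The plan is to unpack the definition of $T$ as a diagonal map and verify the two inclusions by direct coordinate-wise arguments, handling the degenerate case $w_i=0$ separately.

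First, I would observe that by linearity and the definition of $T$ on the standard basis, $T(x_1,\ldots,x_d)=(w_1x_1,\ldots,w_dx_d)$ for every $(x_1,\ldots,x_d)\in\mathbb{R}^d$. This reduces the claim to the coordinate-wise statement that $\{w_i x_i : 0\leq x_i\leq 1\}=[0,w_i]$ for each $i$, and then the product structure of both sides of the asserted equality gives the lemma.

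For the forward inclusion, take $x=(x_1,\ldots,x_d)\in[0,1]^d$. Since $w_i\geq 0$ and $0\leq x_i\leq 1$, multiplying the inequality $0\leq x_i\leq 1$ by the nonnegative scalar $w_i$ yields $0\leq w_ix_i\leq w_i$, showing $T(x)\in[0,w_1]\times\cdots\times[0,w_d]$.

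For the reverse inclusion, take $y=(y_1,\ldots,y_d)$ with $0\leq y_i\leq w_i$ for each $i$. I would construct a preimage coordinatewise: if $w_i>0$, set $x_i:=y_i/w_i\in[0,1]$; if $w_i=0$, then the bound $0\leq y_i\leq w_i$ forces $y_i=0$, and I set $x_i:=0$. In either case $w_ix_i=y_i$, so $T(x)=y$ with $x\in[0,1]^d$. The only subtlety worth flagging is this degenerate case $w_i=0$, where $y_i/w_i$ is undefined but the value of $y_i$ is pinned down, so any choice of $x_i\in[0,1]$ (e.g. $0$) works. No step is genuinely difficult; the lemma is essentially bookkeeping to be invoked later in the analysis of $r$-weight lifting polytopes.
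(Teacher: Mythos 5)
Your proof is correct and follows essentially the same route as the paper: verify the forward inclusion by the coordinate-wise bound $0\leq w_ix_i\leq w_i$, and verify the reverse inclusion by constructing the preimage coordinate-wise as $x_i=y_i/w_i$ when $w_i>0$ and $x_i=0$ when $w_i=0$. Your explicit remark that $w_i=0$ forces $y_i=0$ is a slight clarification of the same argument; nothing more is needed.
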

\begin{proof} 
To show the inclusion ``$\subset$'' take $\alpha\in T([0,1]^d)$. 
Then, $\alpha=T(x_1,\ldots,x_d)$, $1\leq x_i\leq 1$ for all $i$, and consequently $\alpha=x_1w_1e_1+\cdots+x_dw_de_d$. 
Thus, $\alpha$ is in $[0,w_1]\times\cdots\times[0,w_d]$ because $0\leq x_iw_i\leq w_i$ for all $i$. 
Now, we show the inclusion ``$\supset$''. 
Take $\gamma\in[0,w_1]\times\cdots\times[0,w_d]$. 
Then, $\gamma=\gamma_1e_1+\cdots+\gamma_de_d$, $0\leq\gamma_i\leq w_i$ for all $i$. 
Consider the vector $\beta=(\beta_1,\ldots,\beta_d)$, given by $\beta_i=0$ if $w_i=0$ and $\beta_i=\gamma_i/w_i$ if $w_i>0$. 
Then, $\beta\in[0,1]^d$ and $\gamma=T(\beta)$, that is, $\gamma\in T([0,1]^d)$.
\end{proof}

As a preparation for introducing and proving our main result, 
we need to introduce some notation and recall some facts. By 
\cite[Theorem~9.3.6]{monalg-rev}, the Ehrhart ring of $\mathcal{P}$ is
given by  
\[
A(\mathcal{P})=K[\mathbb{Z}^{s+1}\cap{\rm
cone}(\mathcal{P})]=K[t^az^n\mid
(a,n)\in\mathbb{Z}^{s+1}\cap{\rm cone}(\mathcal{P})],
\]
and ${\rm cone}(\mathcal{P})$ has a Hilbert basis 
$\mathcal{H}_\mathcal{P}=\{(c_1,d_1),\ldots,(c_r,d_r)\}$,
$c_i\in\mathbb{N}^s$, $d_i\in\mathbb{N}$, that is, $\mathcal{H}_\mathcal{P}$ is a
finite subset of $\mathbb{N}^{s+1}$ and
${\rm cone}(\mathcal{P})\cap\mathbb{Z}^{s+1}=\mathbb{N}\mathcal{H}_\mathcal{P}$,
where $\mathbb{N}\mathcal{H}_\mathcal{P}$ is the semigroup spanned by
$\mathcal{H}_\mathcal{P}$.              
The Ehrhart ring of $\mathcal{P}$ is the
\emph{semigroup ring} of the 
affine
semigroup $\mathbb{N}\mathcal{H}_\mathcal{P}$, that is,
\[
A(\mathcal{P})=K[\mathbb{N}\mathcal{H}_\mathcal{P}]=K[t^{c_1}z^{d_1},\ldots,t^{c_r}z^{d_r}].
\]
\quad We construct from $\mathcal{P}$ and the linear weights
$w_1,\ldots,w_p$ a 
new lattice polytope $\mathcal{P}_w$ that we call the $r$-\textit{weight
lifting polytope} of $\mathcal{P}$. 
\begin{align}
\mathcal{G}:=&\{(v_i,0)+w_{j_1}(v_i)e_{s+j_1} +\cdots+w_{j_k}(v_i)e_{s+j_k}\mid 1\leq i\leq m,\ 1\leq j_1<\cdots< j_k\leq p\},\label{dec11-22-1}\\
&\mbox{we allow }k=0\mbox{ by setting } w_{j_1}(v_i)e_{s+j_1} +\cdots+w_{j_k}(v_i)e_{s+j_k}:=0\mbox{ if }k=0,\nonumber\\
&\mbox{that is},\mbox{ if }k=0,(v_i,0)\in\mathcal{G}\mbox{ for }i=1,\ldots,m, \nonumber\\
\mathcal{P}_w:=&{\rm conv}(\mathcal{G}),\quad \Gamma:=B_\mathcal{G}=\{(\alpha,1)\mid \alpha\in\mathcal{G}\},\label{dec11-22-2}\\  
\mathcal{H}_{\mathcal{P}_w}:=&\{(c_i,b,d_i)\mid b=(b_1,\ldots,b_p)\in\mathbb{N}^p,\, 0\leq b_j\leq w_j(c_i)\ \forall\, j,\, c_i\in d_i\mathcal{P},\ 1\leq i\leq r\}.
\label{dec11-22-3}
\end{align}

The following theorem is one of our main results about $r$-weighted
Ehrhart rings. In particular, it shows that
$A_r^w(\mathcal{P})$ is the Ehrhart ring $A(\mathcal{P}_w)$ of
$\mathcal{P}_w$ when $w$ is a monomial of $S$.  

\begin{theorem}\label{weighted-ehrhart} 
Let $w_1,\ldots,w_p$ be linear functions such that $w_i(e_j)\in\mathbb{N}$ for all $i,j$ and let ${\mathcal{P}_w}$, $\mathcal{H}_{\mathcal{P}_w}$, $\Gamma$, be as in Eqs.~\eqref{dec11-22-1}-\eqref{dec11-22-3}.    
Then  
\begin{enumerate}
\item[(a)]
$\mathbb{R}_+\Gamma\cap\mathbb{Z}^{s+p+1}=\mathbb{N}\mathcal{H}_{\mathcal{P}_w}$, that is, $\mathcal{H}_{\mathcal{P}_w}$ is a Hilbert basis of $\mathbb{R}_+\Gamma$,  
\item[(b)] $A_r^w(\mathcal{P})=K[\mathbb{N}\mathcal{H}_{\mathcal{P}_w}]$, $w=w_1\cdots w_p$,
\item[(c)] $A({\mathcal{P}_w})=A_r^w(\mathcal{P})$, and
\item[(d)]
$E_{\mathcal{P}_w}(n)=E_\mathcal{P}^{s,\scriptstyle\prod_{i=1}^p(w_i+1)}(n)$ for $n\geq 0$.  
\item[(e)] The $r$-weighted multivariate Ehrhart series
$F_\mathcal{P}^{r,w}(t,q,x)$ of $\mathcal{P}$ is a rational function. 
\end{enumerate}
\end{theorem}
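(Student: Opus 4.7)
The plan is to prove (a) first, since (b)--(e) follow almost formally from it. The key observation is that under the identification $t^aq^bz^n \leftrightarrow (a,b,n)$, the monomial generators of $A_r^w(\mathcal{P})$ correspond exactly to the set $\{(a,b,n) \in \mathbb{Z}^{s+p+1} : (a,n) \in C_\mathcal{P} \cap \mathbb{Z}^{s+1},\ 0 \leq b_j \leq w_j(a)\ \forall j\}$. This set is additively closed (by linearity of $w_j$), so it is the underlying semigroup of $A_r^w(\mathcal{P})$. Thus (a) will immediately give (b), and (c) will follow since $A(\mathcal{P}_w) = K[\mathbb{R}_+\Gamma \cap \mathbb{Z}^{s+p+1}]$ by \cite[Theorem~9.3.6]{monalg-rev} once we note that the cone over $\mathcal{P}_w={\rm conv}(\mathcal{G})$ is exactly $\mathbb{R}_+\Gamma$.

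For the inclusion $\mathbb{N}\mathcal{H}_{\mathcal{P}_w} \subseteq \mathbb{R}_+\Gamma \cap \mathbb{Z}^{s+p+1}$, I would take $(c_i, b, d_i) \in \mathcal{H}_{\mathcal{P}_w}$, write $(c_i,d_i) = \sum_\ell \lambda_\ell (v_\ell,1)$ with $\lambda_\ell \geq 0$, and observe that $0 \leq b_j \leq w_j(c_i) = \sum_\ell \lambda_\ell w_j(v_\ell)$. Applying Lemma~\ref{sep15-22} coordinatewise produces $\mu_\ell^j \in [0, w_j(v_\ell)]$ with $b_j = \sum_\ell \lambda_\ell \mu_\ell^j$. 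The ratios $\beta_{\ell,j} := \mu_\ell^j / w_j(v_\ell)$ (set to $0$ when $w_j(v_\ell) = 0$) lie in $[0,1]$ and play the role of independent marginals: setting $\alpha_{\ell, J} := \prod_{j \in J}\beta_{\ell, j}\prod_{j \notin J}(1-\beta_{\ell, j})$ and $\kappa_{\ell, J} := \lambda_\ell \alpha_{\ell, J}$ gives a presentation of $(c_i, b, d_i)$ as a nonnegative real combination of the generators $(v_\ell, \chi_J w(v_\ell), 1) \in \Gamma$.

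For the reverse inclusion, take $(a,b,n) \in \mathbb{R}_+\Gamma \cap \mathbb{Z}^{s+p+1}$. Projecting a conic decomposition onto the first $s$ and last coordinates shows $(a,n) \in C_\mathcal{P} \cap \mathbb{Z}^{s+1} = \mathbb{N}\mathcal{H}_\mathcal{P}$, so $(a,n) = \sum_i \lambda_i(c_i, d_i)$ with $\lambda_i \in \mathbb{N}$; projecting onto the $(s+j)$-th coordinate shows that $0 \leq b_j \leq \sum_i \lambda_i w_j(c_i) = w_j(a)$. I will then view the decomposition with multiplicity, regarding each $(c_i, d_i)$ as contributing $\lambda_i$ ``slots.'' For each $j$ independently, Lemma~\ref{nov100} applied with $\delta = b_j$ and capacities $w_j(c_i)$ (repeated $\lambda_i$ times) yields integers $b_j^{(i,\tau)} \in [0, w_j(c_i)]$ with $\sum_{i,\tau} b_j^{(i,\tau)} = b_j$. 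Packaging these gives $(a,b,n) = \sum_{i,\tau}(c_i, b^{(i,\tau)}, d_i) \in \mathbb{N}\mathcal{H}_{\mathcal{P}_w}$.

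With (a)--(c) in hand, (d) follows from (c) by counting lattice points at level $n$ on both sides and invoking Lemma~\ref{jan7-23}: $E_{\mathcal{P}_w}(n) = \dim_K A(\mathcal{P}_w)_n = \dim_K A_r^w(\mathcal{P})_n = E_\mathcal{P}^{s, \prod_i(w_i+1)}(n)$. For (e), the expression of $E_\mathcal{P}^{r,w}(q,t,n)$ shows that $F_\mathcal{P}^{r,w}(t,q,x)$ is precisely the integer-point transform of the rational cone $\mathbb{R}_+\Gamma$ (via (a)), and thus a rational function by \cite[Corollary~3.7]{BeckRobins}. The principal obstacle is the reverse inclusion of (a): even though the cone picture makes the constraints $0 \leq b_j \leq w_j(a)$ evident, one must realize the \emph{integer} vector $b$ as an integer combination compatible with a fixed Hilbert-basis decomposition of $(a,n)$, and Lemma~\ref{nov100} is precisely the discrete redistribution tool that makes this possible coordinate by coordinate.
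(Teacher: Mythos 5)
Your proposal is correct, and while the overall architecture (prove (a), then read off (b)--(e)) matches the paper's, your treatment of the hard half of (a) is genuinely different and simpler. For the inclusion $\mathbb{R}_+\Gamma\cap\mathbb{Z}^{s+p+1}\subseteq\mathbb{N}\mathcal{H}_{\mathcal{P}_w}$, the paper runs an induction on $p$ and, within each step, combines Lemma~\ref{nov100} with Lemma~\ref{oct22-22} to break $b_p\leq\sum_i\eta_iw_p(c_i)$ into pieces $\eta_{i,j}^{(\ell)}u_{i,j}^{(\ell)}$ compatible with the multiplicities $\eta_{i,j}$; your device of unfolding each multiplicity $\lambda_i$ into $\lambda_i$ unit ``slots'' of capacity $w_j(c_i)$ lets you invoke Lemma~\ref{nov100} once per coordinate $j$, independently, and eliminates both the induction on $p$ and Lemma~\ref{oct22-22} entirely -- the key point being that the defining conditions $0\leq b_j\leq w_j(c_i)$ of $\mathcal{H}_{\mathcal{P}_w}$ are box constraints, so the $p$ coordinates of $b$ can be redistributed among the slots without interacting. (I checked the packaging: each $(c_i,b^{(i,\tau)},d_i)$ does land in $\mathcal{H}_{\mathcal{P}_w}$ and the slots sum back to $(a,b,n)$.) For the reverse inclusion your explicit product-measure coefficients $\alpha_{\ell,J}=\prod_{j\in J}\beta_{\ell,j}\prod_{j\notin J}(1-\beta_{\ell,j})$ are just a concrete instance of the paper's appeal to Lemma~\ref{image-cube} and the vertex description of the box $[0,w_1(v_\ell)]\times\cdots\times[0,w_p(v_\ell)]$, so that half is essentially the same argument made explicit. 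The deductions of (b)--(e) coincide with the paper's; the only gloss in your write-up is the step from (a) to (b), which implicitly uses that your two inclusion arguments actually identify $\mathbb{R}_+\Gamma\cap\mathbb{Z}^{s+p+1}$ with the semigroup $\{(a,b,n):(a,n)\in C_{\mathcal{P}}\cap\mathbb{Z}^{s+1},\ 0\leq b_j\leq w_j(a)\}$ underlying $A_r^w(\mathcal{P})$ -- but your slot argument proves exactly that stronger statement, so nothing is missing.
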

\begin{proof} 
(a) To show the inclusion ``$\subset$'' we proceed by induction on $p\geq 1$. Assume that $p=1$. Take $(a,b,n)\in \mathbb{R}_+\Gamma\cap\mathbb{Z}^{s+2}$, $b\in\mathbb{N}_+$. 
Then
\begin{align}
(a,b,n)=&\sum_{i=1}^m\mu_{0,i}(v_i,0,1)+\sum_{i=1}^m\mu_{1,i}(v_i,w_1(v_i),1),
\end{align}
where $\mu_{i,j}\geq 0$, and consequently we obtain the following equalities
\begin{align}
a=&\mu_{0,1}v_1+\cdots+\mu_{0,m}v_m+\mu_{1,1}v_1+\cdots+\mu_{1,m}v_m,\label{oct21-22-1}\\
b=&\mu_{1,1}w_1(v_1)+\cdots+\mu_{1,m}w_1(v_m),\label{oct21-22-2}\\
n=&\mu_{0,1}+\cdots+\mu_{0,m}+\mu_{1,1}+\cdots+\mu_{1,m},\nonumber\\
(a,n)=&\sum_{i=1}^m\mu_{0,i}(v_i,1)+\sum_{i=1}^m\mu_{1,i}(v_i,1)=\eta_1(c_1,d_1)+\cdots+\eta_r(c_r,d_r),
\label{oct21-22-4}
\end{align}
for some $\eta_1,\ldots,\eta_r\in \mathbb{N}$. 
The last equality follows recalling that ${\rm cone}(\mathcal{P})\cap\mathbb{Z}^{s+1}=\mathbb{N}\mathcal{H}_\mathcal{P}$ and noticing that $(a,n)\in{\rm cone}(\mathcal{P})\cap\mathbb{Z}^{s+1}$. 
From Eqs.~\eqref{oct21-22-1} and \eqref{oct21-22-2}, one has
$$
b=\sum_{i=1}^m\mu_{1,i}w_1(v_i)\leq \sum_{i=1}^m\mu_{0,i}w_1(v_i)+\sum_{i=1}^m\mu_{1,i}w_1(v_i)=w_1(a),
$$
that is, $b\leq w_1(a)$. 
Hence, from Eq.~\eqref{oct21-22-4}, we get
\begin{equation}\label{touselater}
b\leq w_1(a)=\eta_1w_1(c_1)+\cdots+\eta_rw_1(c_r).
\end{equation}
\quad As $\eta_i,w_1(c_i)\in\mathbb{N}$ for $i=1,\ldots,r$, by Lemma~\ref{nov100}, we can write $b=b_{1,1}+\cdots+b_{1,r}$ for some $b_{1,i}$'s in $\mathbb{N}$ such that $b_{1,i}\leq \eta_iw_1(c_i)$ for $i=1,\ldots,r$. 
If $b_{1,i}=0$, we set $k_{1,i}=1$ and $\eta_{1,i}=u_{1,i}=0$. 
Then, applying Lemma~\ref{oct22-22} to each inequality $b_{1,i}\leq \eta_iw_1(c_i)$ with $b_{1,i}>0$, there are nonnegative integers $\eta_{1,i}^{(j)},u_{1,i}^{(j)}$ such that
\begin{align}
b_{1,i}&=\sum_{j=1}^{k_{1,i}}\eta_{1,i}^{(j)}u_{1,i}^{(j)},\quad \eta_i\geq\sum_{j=1}^{k_{1,i}}\eta_{1,i}^{(j)},\ u_{1,i}^{(j)}\leq w_1(c_i),
\label{oct21-22-5}
\end{align}
for $i=1,\ldots,r$. 
Writing $\eta_i=(\sum_{j=1}^{k_{1,i}}\eta_{1,i}^{(j)})+(\eta_i-\sum_{j=1}^{k_{1,i}}\eta_{1,i}^{(j)})$, by Eqs.~\eqref{oct21-22-4}--\eqref{oct21-22-5}, we get
\begin{align*}
(a,b,n)=&\sum_{i=1}^r\eta_i(c_i,0,d_i)+\Big(\sum_{i=1}^rb_{1,i}\Big)e_{s+1}\\
=&\sum_{i=1}^r\Big(\sum_{j=1}^{k_{1,i}}\eta_{1,i}^{(j)}\Big)(c_i,0,d_i)+\Big(\sum_{i=1}^r\Big(\sum_{j=1}^{k_{1,i}}\eta_{1,i}^{(j)}u_{1,i}^{(j)}\Big)\Big)e_{s+1}+\sum_{i=1}^r\Big(\eta_i-\sum_{j=1}^{k_{1,i}}\eta_{1,i}^{(j)}\Big)(c_i,0,d_i)\\
=&\sum_{i=1}^r\Big(\sum_{j=1}^{k_{1,i}}\eta_{1,i}^{(j)}(c_i,0,d_i)+\sum_{j=1}^{k_{1,i}}\eta_{1,i}^{(j)}u_{1,i}^{(j)}e_{s+1}\Big)+\sum_{i=1}^r\Big(\eta_i-\sum_{j=1}^{k_{1,i}}\eta_{1,i}^{(j)}\Big)(c_i,0,d_i)\\
=&\sum_{i=1}^r\Big(\sum_{j=1}^{k_{1,i}}\eta_{1,i}^{(j)}(c_i,u_{1,i}^{(j)},d_i) \Big)+\sum_{i=1}^r\Big(\eta_i-\sum_{j=1}^{k_{1,i}}\eta_{1,i}^{(j)}\Big)(c_i,0,d_i),
\end{align*}
that is, $(a,b,n)\in\mathbb{N}\mathcal{H}_{\mathcal{P}_w}$. 
Assume that $p>1$. 
We set 
\begin{align}
\mathcal{G}':=&\{(v_i,0)+w_{j_1}(v_i)e_{s+j_1}+\cdots+w_{j_k}(v_i)e_{s+j_k}\mid 1\leq i\leq m,\, 1\leq j_1<\cdots< j_k<p\},\nonumber\\
{\mathcal{P}}':=&{\rm conv}(\mathcal{G}'), \quad \Gamma':=\{(\alpha',1)\mid \alpha'\in\mathcal{G}'\},\nonumber\\
\mathcal{H}_{\mathcal{P}^{'}}:=&\{(c_i,b',d_i)\mid b'=(b_1,\ldots,b_{p-1})\in\mathbb{N}^{p-1},\, 0\leq b_j\leq w_j(c_i)\ \forall\, j,\, c_i\in d_i\mathcal{P},\ 1\leq i\leq r\}.
\label{oct29-22-1}
\end{align}
\quad Take $(a,b,n)\in\mathbb{R}_+\Gamma\cap\mathbb{Z}^{s+p+1}$, $b=(b_1,\ldots,b_p)$. 
Then, one has
\begin{align}
(a,b,n)=&\hspace{-5mm}\sum_{\begin{array}{c}
\scriptstyle 1\leq i\leq m\vspace{-1mm}\\
\scriptstyle 1\leq j_1<\cdots<j_k\leq p\end{array}}\hspace{-5mm}\mu_{i,j_1,\ldots,j_k}\big((v_i,0)+w_{j_1}(v_i)e_{s+j_1}+\cdots+w_{j_k}(v_i)e_{s+j_k}+e_{s+p+1}\big),
\end{align}
where $\mu_{i,j_1,\ldots,j_k}\geq 0$. 
Consequently, noticing that $b_i$ is the $(i+s)$-entry of $(a,b,n)$, we obtain the following equalities
\begin{align}
a=&\sum\mu_{i,j_1,\ldots,j_k}v_i,\label{oct26-22-1}\\
b_1e_{s+1}+\cdots+b_pe_{s+p}=&\sum\mu_{i,j_1,\ldots,j_k}(w_{j_1}(v_i)e_{s+j_1}+\cdots+w_{j_k}(v_i)e_{s+j_k}),\label{oct26-22-2}\\
n=&\sum\mu_{i,j_1,\ldots,j_k},\label{oct26-22-3}\\
(a,n)=&\sum\mu_{i,j_1,\ldots,j_k}(v_i+e_{s+p+1}).
\label{oct26-22-4}
\end{align}
\quad Consider the following Cartesian products
\begin{align}
&([0,w_1(c_1)]\times\cdots\times [0,w_{p-1}(c_1)])\cap\mathbb{Z}^{p-1}=\{\beta_{1,1},\ldots,\beta_{1,q_1}\},\label{oct29-22-4}\\
&\quad\quad\quad\quad\quad\quad\vdots\quad\quad\quad\quad\quad\quad \quad\quad\quad\quad\quad\vdots\nonumber\\
&([0,w_1(c_r)]\times\cdots\times [0,w_{p-1}(c_r)])\cap\mathbb{Z}^{p-1}=\{\beta_{r,1},\ldots,\beta_{r,q_r}\}.\label{oct29-22-5}
\end{align}
\quad Setting $b'=(b_1,\ldots,b_{p-1})$, one has $(a,b',n)\in\mathbb{R}_+\Gamma'\cap\mathbb{Z}^{s+p}$. 
Then, by induction, we get $(a,b',n)\in\mathbb{N}\mathcal{H}_{\mathcal{P}'}$. 
Therefore, we can write
\begin{equation}\label{oct26-22}
(a,b',n)=\sum_{i=1}^r\sum_{j=1}^{q_i}\eta_{i,j}(c_i,\beta_{i,j},d_i),
\end{equation}
where $\eta_{i,j}\in\mathbb{N}$. 
From Eqs.~\eqref{oct26-22-1} and \eqref{oct26-22-2}, one has
$$
b_p=\sum_{j_k=p}\mu_{i,j_1,\ldots,j_k}w_p(v_i)\leq \sum\mu_{i,j_1,\ldots,j_k}w_p(v_i)=w_p(a),
$$
that is, $b_p\leq w_p(a)$. 
Hence, from Eq.~\eqref{oct26-22}, we get
$$
b_p\leq w_p(a)=\sum_{i=1}^r\sum_{j=1}^{q_i}\eta_{i,j}w_p(c_i).
$$
\quad As $\eta_{i,j}w_p(c_i)\in\mathbb{N}$, by Lemma~\ref{nov100}, we can write
$$
b_p=\sum_{i=1}^r\sum_{j=1}^{q_i}\alpha_{i,j},
$$
where $\alpha_{i,j}\in\mathbb{N}$ and $\alpha_{i,j}\leq \eta_{i,j}w_p(c_i)$. 
Then, applying Lemma~\ref{oct22-22} to each inequality, there are nonnegative integers $\eta_{i,j}^{(\ell)},u_{i,j}^{(\ell)}$ such that
$$
b_p=\sum_{i=1}^r\sum_{j=1}^{q_i}\sum_{\ell=1}^{k_{i,j}}\eta_{i,j}^{(\ell)}u_{i,j}^{(\ell)},
$$
$u_{i,j}^{(\ell)}\leq w_p(c_i)$ and $\eta_{i,j}\geq\sum_{\ell=1}^{k_{i,j}}\eta_{i,j}^{(\ell)}$.
As $(a,b,n)=(a,b',n)+b_pe_{s+p}$, by Eq.~\eqref{oct26-22}, we get 
\begin{align*}
(a,b,n)=&\sum_{i=1}^r\sum_{j=1}^{q_i}\eta_{i,j}(c_i,\beta_{i,j},0,d_i)+b_pe_{s+p}=\sum_{i=1}^r\sum_{j=1}^{q_i}
\Big(\sum_{\ell=1}^{k_{i,j}}\eta_{i,j}^{(\ell)}\Big)(c_i,\beta_{i,j},0,d_i)+b_pe_{s+p}\\
&+\sum_{i=1}^r\sum_{j=1}^{q_i} \Big(\eta_{i,j}-\sum_{\ell=1}^{k_{i,j}}\eta_{i,j}^{(\ell)}\Big)(c_i,\beta_{i,j},0,d_i)\\
&=\sum_{i=1}^r\sum_{j=1}^{q_i} \sum_{\ell=1}^{k_{i,j}}\eta_{i,j}^{(\ell)}(c_i,\beta_{i,j},0,d_i)+\Big(\sum_{i=1}^r\sum_{j=1}^{q_i}\sum_{\ell=1}^{k_{i,j}} \eta_{i,j}^{(\ell)}u_{i,j}^{(\ell)}\Big)e_{s+p}\\
&+\sum_{i=1}^r\sum_{j=1}^{q_i} \Big(\eta_{i,j}-\sum_{\ell=1}^{k_{i,j}}\eta_{i,j}^{(\ell)}\Big)(c_i,\beta_{i,j},0,d_i)\\
&=\sum_{i=1}^r\sum_{j=1}^{q_i} \sum_{\ell=1}^{k_{i,j}}\eta_{i,j}^{(\ell)}(c_i,\beta_{i,j},u_{i,j}^{(\ell)},d_i)+\sum_{i=1}^r\sum_{j=1}^{q_i} \Big(\eta_{i,j}-\sum_{\ell=1}^{k_{i,j}}\eta_{i,j}^{(\ell)}\Big)(c_i,\beta_{i,j},0,d_i).
\end{align*}
\quad Therefore, $(a,b,n)\in\mathbb{N}\mathcal{H}_{\mathcal{P}_w}$, and the induction process is complete. 

To show the inclusion ``$\supset$'' take $(c_i,b,d_i)\in\mathcal{H}_{\mathcal{P}_w}$, $b=(b_1,\ldots,b_p)$. 
It suffices to show that $(c_i,b,d_i)\in\mathbb{R}_+\Gamma$. 
As $c_i\in d_i\mathcal{P}$ and $0\leq b_j\leq w_j(c_i)$ for $j=1,\ldots,p$, one has
\begin{align*}
c_i&=d_i\sum_{\ell=1}^m\epsilon_\ell v_\ell,\ \epsilon_\ell\geq 0,\ \sum_{\ell=1}^m\epsilon_\ell=1,\\
0&\leq b_j\leq w_j(c_i)=d_i\sum_{\ell=1}^m\epsilon_\ell w_j(v_\ell),\ j=1,\ldots,p.
\end{align*}
\quad Hence, noticing that $b_i$ is the $(s+i)$-entry of $(c_i,b,d_i)$ and using Lemma~\ref{sep15-22}, we obtain
\begin{align*}
b_j&=d_i\sum_{\ell=1}^m\epsilon_\ell\mu_{j,\ell},\ 0\leq\mu_{j,\ell}\leq w_j(v_\ell),\,\ell=1,\ldots,m,\ j=1,\ldots,p,\\
(c_i,b,d_i)&=d_i\Big[\sum_{\ell=1}^m\epsilon_\ell(v_\ell,0)+\sum_{j=1}^p\Big(\sum_{\ell=1}^m\epsilon_\ell\mu_{j,\ell}\Big)e_{s+j}+\Big(\sum_{\ell=1}^m\epsilon_\ell\Big)e_{s+p+1}\Big]\\
&=d_i\Big[\sum_{\ell=1}^m\epsilon_\ell\Big((v_\ell,0)+\Big(\sum_{j=1}^p\mu_{j,\ell}e_{s+j}\Big)+e_{s+p+1}\Big)\Big].
\end{align*}
\quad Thus, setting $\gamma_\ell:=(v_\ell,0)+(\sum_{j=1}^p\mu_{j,\ell}e_{s+j})+e_{s+p+1}$, we need only show that $\gamma_\ell\in\mathbb{R}_+\Gamma$ for $\ell=1,\ldots,m$. 
Recall that 
$$
[0,1]^p={\rm conv}(\{0\}\cup\{e_{j_1}+\cdots+e_{j_k}\mid 1\leq j_1<\cdots<j_k\leq p\}).
$$
\quad Fix $1\leq\ell\leq m$ and consider the linear map $T_\ell\colon\mathbb{R}^p\rightarrow\mathbb{R}^p$, $e_i\mapsto w_i(v_\ell)e_i$, $i=1,\ldots,p$. 
$T_\ell$ is linear and commutes with taking convex hulls, by Lemma~\ref{image-cube}, we get
\begin{align*}
&[0,w_1(v_\ell)]\times\cdots\times[0,w_p(v_\ell)]=T_\ell([0,1]^p)\\
&=T_\ell({\rm conv}(\{0\}\cup\{e_{j_1}+\cdots+e_{j_k}\mid 1\leq j_1<\cdots<j_k\leq p\}))\\
&\quad\quad ={\rm conv}(\{0\}\cup\{w_{j_1}(v_\ell)e_{j_1}+\cdots+w_{j_k}(v_\ell)e_{j_k}\mid 1\leq j_1<\cdots<j_k\leq p\}).
\end{align*}  
\quad Hence, as $0\leq\mu_{j,\ell}\leq w_j(v_\ell)$ for $j=1,\ldots,p$, one obtains
\begin{align*}
&\sum_{j=1}^p\mu_{j,\ell}e_j\in{\rm conv}(\{0\}\cup\{w_{j_1}(v_\ell)e_{j_1}+\cdots+w_{j_k}(v_\ell)e_{j_k}\mid 1\leq j_1<\cdots<j_k\leq p\}),\\
&\sum_{j=1}^p\mu_{j,\ell}e_j=\lambda_1\cdot0+\sum\lambda_{j_1,\ldots,j_k} w_{j_1}(v_\ell)e_{j_1}+\cdots+w_{j_k}(v_\ell)e_{j_k},\\
&\lambda_1\geq 0,\ \lambda_{j_1,\ldots,j_k}\geq 0,\ \lambda_1+\sum\lambda_{j_1,\ldots,j_k}=1,\\
&(v_\ell,0)+\sum_{j=1}^p\mu_{j,\ell}e_{s+j}=(\lambda_1+\sum\lambda_{j_1,\ldots,j_k})(v_\ell,0)+\lambda_1\cdot0\\
&\quad +\sum\lambda_{j_1,\ldots,j_k} w_{j_1}(v_\ell)e_{s+j_1}+\cdots+w_{j_k}(v_\ell)e_{s+j_k}\\
&=\lambda_1(v_\ell,0)+\sum\lambda_{j_1,\ldots,j_k}\big( (v_\ell,0)+w_{j_1}(v_\ell)e_{s+j_1}+\cdots+w_{j_k}(v_\ell)e_{s+j_k}\big).
\end{align*}
\quad Then, adding $e_{s+p+1}=(\lambda_1+\sum\lambda_{j_1,\ldots,j_k})e_{s+p+1}$ to the last two equalities, we get
\begin{align*}
&\gamma_\ell=(v_\ell,0)+\Big(\sum_{j=1}^p\mu_{j,\ell}e_{s+j}\Big)+e_{s+p+1}=\lambda_1((v_\ell,0)+e_{s+p+1})\\
&\quad\quad+\sum\lambda_{j_1,\ldots,j_k}\big( (v_\ell,0)+w_{j_1}(v_\ell)e_{s+j_1}+\cdots+w_{j_k}(v_\ell)e_{s+j_k}+e_{s+p+1}\big).
\end{align*}
\quad Thus, $\gamma_\ell\in\mathbb{R}_+\Gamma$ for $\ell=1,\ldots,m$.

(b) The semigroup ring $K[\mathbb{N}\mathcal{H}_{\mathcal{P}_w}]$ is equal to the monomial subring 
$$
K[t^aq^bz^n\mid (a,b,n)\in\mathbb{N}\mathcal{H}_{\mathcal{P}_w}],
$$
and the inclusion $A_r^w(\mathcal{P})\supset K[\mathbb{N}\mathcal{H}_{\mathcal{P}_w}]$ follows noticing that $t^{c_i}q^bz^{d_i}$ is in $A_r^w(\mathcal{P})$ for all $(c_i,b,d_i)$ in $\mathcal{H}_{\mathcal{P}_w}$. 
The inclusion $A_r^w(\mathcal{P})\subset K[\mathbb{N}\mathcal{H}_{\mathcal{P}_w}]$ follows from the proof of part (a). 
For clarity, we briefly explain the main steps. 
We proceed by induction on $p\geq 1$. 
Assume that $p=1$. 
Take $t^aq_1^bz^n\in A_r^w(\mathcal{P})$.
Then, $a\in n\mathcal{P}\cap\mathbb{Z}^s$, $b\in\mathbb{N}$, and $0\leq b\leq w_1(a)$. 
Thus, $t^az^n\in A(\mathcal{P})$ and consequently, by Eq.~\eqref{jan20-23}, we get 
\begin{align*}
t^az^n&=(t^{c_1}z^{d_1})^{\eta_1}\cdots (t^{c_r}z^{d_r})^{\eta_r},\quad \eta_i\in\mathbb{N},\\
(a,n)&=\eta_1(c_1,d_1)+\cdots+\eta_r(c_r,d_r),\\
b&\leq w_1(a)=\eta_1w_1(c_1)+\cdots+\eta_rw_1(c_r).
\end{align*}
\quad We can now proceed as in the case $p=1$ of part (a), starting with Eq.~\eqref{touselater}, to obtain that $(a,b,n)\in\mathbb{N}\mathcal{H}_{\mathcal{P}_w}$, that is, $t^aq_1^bz^n\in K[\mathbb{N}\mathcal{H}_{\mathcal{P}_w}]$. 
Assume that $p>1$. 
Take $t^aq^bz^n\in A_r^w(\mathcal{P})$, $b=(b_1,\ldots,b_p)$. 
In particular one has $0\leq b_i\leq w_i(a)$ for $i=1,\ldots,p$. 
With the notation of Eq.~\eqref{oct29-22-1}, by induction we get 
$$
t^aq_1^{b_1}\cdots q_{p-1}^{b_{p-1}}z^n\in K[\mathbb{N}\mathcal{H}_{\mathcal{P}'}],
$$
that is, setting $b'=(b_1,\ldots,b_{p-1})$, with the notation of Eqs.~\eqref{oct29-22-4}--\eqref{oct29-22-5}, one has 
\begin{equation*}
(a,b',n)=\sum_{j=1}^{q_1}\eta_{1,j}(c_1,\beta_{1,j},d_1)+\cdots+
\sum_{j=1}^{q_r}\eta_{r,j}(c_r,\beta_{r,j},d_r).
\end{equation*}
\quad As $0\leq b_p\leq w_p(a)$, starting with Eq.~\eqref{oct26-22}, we can proceed as in the case $p>1$
of part (a) to obtain that $(a,b,n)\in\mathbb{N}\mathcal{H}_{\mathcal{P}_w}$.

(c) As ${\mathcal{P}_w}={\rm conv}(\mathcal{G})$ and $\Gamma=\{(\alpha,1)\mid \alpha\in\mathcal{G}\}$, by
\cite[Theorem~9.3.6(b)]{monalg-rev}, one has
$$
A({\mathcal{P}_w})=K[t^aq^bz^n\mid (a,b,n)\in\mathbb{R}_+\Gamma\cap\mathbb{Z}^{s+p+1}]=K[\mathbb{R}_+\Gamma\cap\mathbb{Z}^{s+p+1}].
$$
\quad Then, by parts (a) and (b), $A({\mathcal{P}_w})=K[\mathbb{N}\mathcal{H}_{\mathcal{P}_w}]=A_r^w(\mathcal{P})$. 

(d) By part (c), $A({\mathcal{P}_w})=A_r^w(\mathcal{P})$. 
Thus, by Lemma~\ref{jan7-23}, we get
$$
E_{\mathcal{P}_w}(n)=\dim_K(A({\mathcal{P}_w})_n)=\dim_K(A_r^w(\mathcal{P})_n)=E_\mathcal{P}^{s, \scriptstyle\prod_{i=1}^p(w_i+1)}(n),
$$
and the proof of (d) is complete.

(e) By part (a), $(a,b,n)\in
\mathbb{R}_+\Gamma\cap\mathbb{Z}^{s+p+1}$ if and only if
$(a,b,n)\in\mathbb{N}\mathcal{H}_{\mathcal{P}_w}$ and, by part (b), 
$(a,b,n)\in\mathbb{N}\mathcal{H}_{\mathcal{P}_w}$ if and only if 
$(a,n)\in
{\rm cone}(\mathcal{P})\cap\mathbb{Z}^{s+1}$ and $0\leq b_i\leq w_i(a)$ for all
$i$, where $b=(b_1,\ldots,b_p)$. 
Then, by
\cite[Corollary~3.7]{BeckRobins}, the integer-point transform of 
the cone $\mathbb{R}_+\Gamma$ is a rational function given by
\begin{align*}
\sum_{\scriptstyle (a,b,n)\in\mathbb{R}_+\Gamma\cap\, \ZZ^{s+p+1}} \hspace{-7mm} t^{a}q^bx^n 
=\hspace{-7mm}\sum_{\begin{array}{c}\scriptstyle (a,n)\in{\rm cone}(\mathcal{P})\cap \ZZ^{s+1}\vspace{-1mm}\\ 
{\scriptstyle 0 \leq b_i\leq w_i(a)\ \forall\, i}\end{array}} \hspace{-7mm} t^{a}q^bx^n=F_\mathcal{P}^{r,w}(t,q,x).
\end{align*}
\quad Thus, $F_\mathcal{P}^{r,w}(t,q,x)$ is a rational function. 
\end{proof}

\begin{remark} 
From Theorem~\ref{weighted-ehrhart}(c) it follows that $t^aq^bz^n\in A_r^w(\mathcal{P})_n$ if and only if $(a,b)\in n{\mathcal{P}_w}$.
\end{remark}

The following result says in particular that if $\mathcal{P}$ is
full-dimensional, that is, $\dim(\mathcal{P})=s$ and $w$ is a
monomial of degree $p$, then the degree of $E_{\mathcal{P}_w}$ is
$\dim(\mathcal{P})+p$.   

\begin{proposition}\label{nov6-22} Let $w=w_1\cdots w_p$ be any monomial of $S$ of degree $p$ with $w_i\in\{t_1,\ldots,t_s\}$ for $i=1,\ldots,p$. 
If $w_i\not\equiv 0$ on $\mathcal{P}$ for $i\leq r$ and $w_i\equiv 0$ on $\mathcal{P}$ for $i>r$, then
$$
\dim({\mathcal{P}_w})=\deg(E_{\mathcal{P}_w})=\dim(\mathcal{P})+r.
$$
\end{proposition}
\begin{proof} The equality $\dim({\mathcal{P}_w})=\deg(E_{\mathcal{P}_w})$ is well known, see \cite{BeckRobins}, \cite[p.~383]{monalg-rev}. 
Let $D$ be the following matrix of size $(m\sum_{k=0}^p\binom{p}{k})\times(s+p+1)$ whose rows correspond to the vectors in $\Gamma$
\begin{equation*}
D=
\left[\begin{matrix} 
v_1&0&\cdots&0&1\\ 
\vdots& &&&\vdots\\ 
v_m&0&\cdots&0&1\\ 
v_i& &\sum_{\ell=1}^kw_{j_\ell}(v_i)e_{s+j_\ell}
&&1
\end{matrix}
\right]_{\scriptstyle 1\leq i\leq m,\, 1\leq j_1<\cdots<j_k\leq p}
\end{equation*}
\quad By using elementary row operations, $D$ is equivalent to 
\begin{equation*}
D_1=
\left[\begin{matrix} 
v_1&0&\cdots&0&1\\ 
\vdots& &&&\vdots\\ 
v_m&0&\cdots&0&1\\ 
0& &\sum_{\ell=1}^kw_{j_\ell}(v_i)e_{s+j_\ell}
&&0
\end{matrix}
\right]_{\scriptstyle 1\leq i\leq m,\, 1\leq j_1<\cdots<j_k\leq p}
\end{equation*}
\quad Pick $v_{\ell_1},\ldots v_{\ell_r}$ in $\{v_1,\ldots,v_m\}$ such that $w_1(v_{\ell_1})\neq 0,\ldots,w_r(v_{\ell_r})\neq 0$. 
The matrix
\begin{equation*}
D_2=
\left[\begin{matrix} 
v_1&0&\cdots&0&1\\ 
\vdots& &&&\vdots\\ 
v_m&0&\cdots&0&1
\end{matrix}\right]
\end{equation*}
has rank $\dim(\mathcal{P})+1$ (see the proof of \cite[Proposition~9.3.1]{monalg-rev}) whereas the matrix
\begin{equation*}
D_3=
\left[\begin{matrix} 
0& &\sum_{\ell=1}^kw_{j_\ell}(v_i)e_{s+j_\ell}
&&0
\end{matrix}
\right]_{\scriptstyle 1\leq i\leq m,\, 1\leq j_1<\cdots<j_k\leq p}
\end{equation*}
has rank at most $r$ because $w_i=0$ on $\mathcal{P}$ for $i>r$. 
To show that the rank of $D_3$ is $r$ notice that $w_1(v_{\ell_1})e_{s+1},\ldots,w_r(v_{\ell_r})e_{s+r}$ are linearly independent rows of $D_3$. 
Therefore
\begin{align*}
\dim({\mathcal{P}_w})+1&=\dim(A({\mathcal{P}_w}))={\rm rank}(D)\\
&={\rm rank}(D_1)={\rm rank}(D_2)+{\rm rank}(D_3)=\dim(\mathcal{P})+1+r,
\end{align*}
where the first two equalities follow from \cite[Theorem~9.3.6(b), Corollary~8.2.21]{monalg-rev}. 
Hence $\dim({\mathcal{P}_w})$ is equal to $\dim(\mathcal{P})+r$ and the proof is complete.
\end{proof}

We end with an example that illustrates the results of this section:  

\begin{example}\label{example0} First, we give an illustration of the equality $E_{\mathcal{P}_w}=E_\mathcal{P}^{s, \scriptstyle\prod_{i=1}^p(w_i+1)}$ of Theorem~\ref{weighted-ehrhart}.
Let $\mathcal{P}={\rm conv}(v_1,v_2,v_3,v_4)$, $v_1=(0,0)$, $v_2=(1,0)$, $v_3=(0,1)$, $v_4=(1,1)$, be the unit square in $\mathbb{R}^2$ and let $w_i$, $i=1,2,3$, be the linear functions $w_1=t_1+t_2$, $w_2=2t_1+3t_2$ and $w_3=t_1$. 
Using \textit{Macaulay}$2$ \cite{mac2}, we obtain 	
\begin{align*}
E_\mathcal{P}(n)&=(n+1)^2,\\
E_\mathcal{P}^{s, w_1}(n)&=(n^3+2n^2+n),\\
E_\mathcal{P}^{s, w_2}(n)&=(5/2)n^3+5n^2+(5/2)n,\\
E_\mathcal{P}^{s, w_3}(n)&=(1/2)n^3+n^2+(1/2)n,\\
E_\mathcal{P}^{s, w_1w_2}(n)&=(35/12)n^4+(20/3)n^3+(55/12)n^2+(5/6)n,\\
E_\mathcal{P}^{s, w_1w_3}(n)&=(17/12)n^4+(19/6)n^3+(25/12)n^2+(1/3)n,\\
E_\mathcal{P}^{s, w_2w_3}(n)&=(7/12)n^4+(4/3)n^3+(11/12)n^2+(1/6)n,\\
E_\mathcal{P}^{s, w_1w_2w_3}(n)&=(11/6)n^5+(29/6)n^4+(25/6)n^3+(7/6)n^2.
\end{align*}
Therefore, by Theorem~\ref{weighted-ehrhart}, we get
\begin{align*}
E_{\mathcal{P}_w}(n)=&E_\mathcal{P}^{s, \scriptstyle\prod_{i=1}^p(w_i+1)}(n)= E_\mathcal{P}(n)+E_\mathcal{P}^{s, w_1}(n)+E_\mathcal{P}^{s, w_2}(n)+E_\mathcal{P}^{s, w_3}(n)+\\
&E_\mathcal{P}^{s, w_1w_2}(n)+E_\mathcal{P}^{s, w_1w_3}(n)+E_\mathcal{P}^{s, w_2w_3}(n)+E_\mathcal{P}^{s, w_1w_2w_3}(n)\\
=&(11/6)n^5+(39/4)n^4+(58/3)n^3+(71/4)n^2+(22/3)n+1,
\end{align*}
where ${\mathcal{P}_w}={\rm conv}(\mathcal{G})\subset \mathbb{R}^5$ is the $r$-weight lifting polytope and $\mathcal{G}$ is the multiset of vectors of the form $(v_i,0)$ and all $(v_i,\sum_{\ell\in I} w_\ell(v_i)e_{s+\ell})$ for $I\subseteq[p]$, see Eq.~\eqref{dec11-22-1}. We implemented the construction in \textit{Macaulay}$2$ \cite{mac2}. Computing $w_i(v_j)$ for all $i,j$, this list of vectors becomes
\begin{verbatim}
     (0, 0, 0, 0, 0),  (0, 0, 0, 0, 0),  (1, 0, 0, 0, 0),  (1, 0, 1, 2, 0),  
     (0, 1, 0, 0, 0),  (0, 1, 1, 3, 0),  (1, 1, 0, 0, 0),  (1, 1, 2, 5, 0),  
     (0, 0, 0, 0, 0),  (0, 0, 0, 0, 0),  (1, 0, 1, 0, 0),  (1, 0, 1, 0, 1), 
     (0, 1, 1, 0, 0),  (0, 1, 1, 0, 0),  (1, 1, 2, 0, 0),  (1, 1, 2, 0, 1),  
     (0, 0, 0, 0, 0),  (0, 0, 0, 0, 0),  (1, 0, 0, 2, 0),  (1, 0, 0, 2, 1),  
     (0, 1, 0, 3, 0),  (0, 1, 0, 3, 0),  (1, 1, 0, 5, 0),  (1, 1, 0, 5, 1), 
     (0, 0, 0, 0, 0),  (0, 0, 0, 0, 0),  (1, 0, 0, 0, 1),  (1, 0, 1, 2, 1),  
     (0, 1, 0, 0, 0),  (0, 1, 1, 3, 0),  (1, 1, 0, 0, 1),  (1, 1, 2, 5, 1).
\end{verbatim}
\quad The Ehrhart series of $\mathcal{P}$ and ${\mathcal{P}_w}$ are
$$
F_{\mathcal{P}}(x)=\frac{1+ x}{(1-x)^3}\ \mbox{ and }\ F^{s,w}_{\mathcal{P}}(x) = F_{\mathcal{P}_w}(x)=\frac{1+ 51x+ 129x^2+ 39x^3}{(1-x)^6},\ \mbox{ respectively}.
$$
\end{example}

\section{$s$-weighted Ehrhart functions and Ehrhart series}\label{s-weighted-section}

In this section, we show applications to $s$-weighted Ehrhart theory. 
To avoid repetitions, we continue to employ the notations and definitions used in Section~\ref{r-weighted-section}.  

\begin{proposition}\label{nov8-22} 
Let $w=w_1\cdots w_p$ be any monomial of $S$ of degree $p$ with $w_i\in\{t_1,\ldots,t_s\}$ for $i=1,\ldots,p$. 
If $w_i\not\equiv 0$ on $\mathcal{P}$ for $i=1,\ldots,p$, then $E_\mathcal{P}^{s,w}$ is a polynomial whose leading coefficient is the relative volume ${\rm vol}({\mathcal{P}_w})$ of ${\mathcal{P}_w}$ and 
$$
\deg(E^{s, w}_\mathcal{P})=\dim(\mathcal{P})+p.
$$
\end{proposition}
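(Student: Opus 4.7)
The plan is to derive the proposition from the combination of Theorem~\ref{weighted-ehrhart}(d), which gives the identity
\[
E_{\mathcal{P}_w}(n) = E_\mathcal{P}^{s,\,\prod_{i=1}^p(w_i+1)}(n),
\]
and Lemma~\ref{nov6-22}, which under the hypothesis $w_i\not\equiv 0$ on $\mathcal{P}$ for every $i$ (so $r=p$) pins down
\[
\deg(E_{\mathcal{P}_w}) = \dim(\mathcal{P}) + p.
\]
Expanding the product, one gets
\[
E_{\mathcal{P}_w}(n) = \sum_{I\subseteq[p]} E_\mathcal{P}^{s,\,w_I}(n), \qquad w_I := \prod_{i\in I} w_i,
\]
so $E_\mathcal{P}^{s,w}$ (the $I=[p]$ summand) sits inside $E_{\mathcal{P}_w}$ alongside lower-order pieces $E_\mathcal{P}^{s,w_I}$ for $I\subsetneq[p]$. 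The strategy is to induct on $p$ to show that each proper summand has strictly smaller degree, so the leading behavior of $E_{\mathcal{P}_w}$ is inherited by $E_\mathcal{P}^{s,w}$.

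For the base case $p=1$, apply Theorem~\ref{weighted-ehrhart}(d) to $w=w_1$: we have $E_{\mathcal{P}_{w_1}}(n) = E_\mathcal{P}(n) + E_\mathcal{P}^{s,w_1}(n)$. Since $\deg(E_\mathcal{P})=\dim(\mathcal{P})$ and, by Lemma~\ref{nov6-22}, $\deg(E_{\mathcal{P}_{w_1}}) = \dim(\mathcal{P})+1$, it follows that $E_\mathcal{P}^{s,w_1}$ is a polynomial of degree $\dim(\mathcal{P})+1$. For the inductive step, assume the statement holds for every product of $k<p$ variables from $\{t_1,\ldots,t_s\}$ not vanishing identically on $\mathcal{P}$. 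For each $I\subsetneq[p]$ with $|I|=k$, each factor of $w_I$ is one of the $w_i$ and in particular does not vanish on $\mathcal{P}$, so by the induction hypothesis $E_\mathcal{P}^{s,w_I}$ is a polynomial of degree $\dim(\mathcal{P})+k < \dim(\mathcal{P})+p$. Hence in the identity
\[
E_\mathcal{P}^{s,w}(n) = E_{\mathcal{P}_w}(n) - \sum_{I\subsetneq[p]} E_\mathcal{P}^{s,w_I}(n),
\]
the right-hand side is a polynomial, and its leading term coincides with the leading term of $E_{\mathcal{P}_w}$, which by Lemma~\ref{nov6-22} has degree $\dim(\mathcal{P})+p$.

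Finally, the leading coefficient: because $E_{\mathcal{P}_w}$ is the Ehrhart polynomial of the lattice polytope $\mathcal{P}_w$, its leading coefficient equals the relative volume ${\rm vol}(\mathcal{P}_w)$. Since all the subtracted summands have strictly smaller degree, this leading coefficient transfers verbatim to $E_\mathcal{P}^{s,w}$, giving
\[
E_\mathcal{P}^{s,w}(n) = {\rm vol}(\mathcal{P}_w)\, n^{\dim(\mathcal{P})+p} + (\text{lower order}).
\]
The main obstacle is justifying that every $w_I$ with $I\subsetneq[p]$ genuinely satisfies the hypotheses needed to invoke the induction hypothesis, i.e., that each factor of $w_I$ is nonvanishing on $\mathcal{P}$ \emph{and} that $w_I$ itself is a monomial in variables $t_j$; both are immediate from our assumption on $w$, so the induction is clean. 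No other delicate point arises, since Theorem~\ref{weighted-ehrhart}(d) and Lemma~\ref{nov6-22} do the heavy lifting.
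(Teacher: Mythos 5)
Your proposal is correct and follows essentially the same route as the paper's own proof: both expand $E_{\mathcal{P}_w}=E_{\mathcal{P}}^{s,\prod_i(w_i+1)}$ via Theorem~\ref{weighted-ehrhart}(d), induct on $p$ to show all proper summands $E_{\mathcal{P}}^{s,w_I}$ have degree strictly less than $\dim(\mathcal{P})+p$, and then read off the degree and leading coefficient from $E_{\mathcal{P}_w}$ using Lemma~\ref{nov6-22}. No gaps.
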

\begin{proof} 
We argue by induction on $p\geq 1$. 
By Theorem~\ref{weighted-ehrhart}, we obtain
\begin{align}\label{nov8-22-1}
E_{\mathcal{P}_w}(n)&=E_\mathcal{P}^{s, w_1\cdots w_p}(n)+\cdots+\Big({\sum_{1\leq j_1<\cdots< j_k\leq p}}E_\mathcal{P}^{s, w_{j_1}\cdots w_{j_k}}(n)\Big)+\cdots+E_\mathcal{P}(n).
\end{align}
\quad Assume that $p=1$. 
Then, by Eq.~\eqref{nov8-22-1}, we obtain
\begin{equation}\label{jan22-23}
E_{\mathcal{P}_w}(n)=E_\mathcal{P}^{s, w}(n)+E_\mathcal{P}(n)\mbox{ for }n\geq 0.
\end{equation}
\quad 
Since $E_{\mathcal{P}_w}$ and $E_\mathcal{P}$ are Ehrhart functions
of lattice polytopes, one has
$\deg(E_{\mathcal{P}_w})=\dim({\mathcal{P}_w})$,
$\deg(E_\mathcal{P})=\dim(\mathcal{P})$, the leading coefficient of
$E_{\mathcal{P}_w}$ is ${\rm vol}({\mathcal{P}_w})$ and, by
Proposition~\ref{nov6-22}, we get that $\deg(E_{\mathcal{P}_w})=\dim(\mathcal{P})+1$. 
Hence, by Eq.~\eqref{jan22-23}, $E_\mathcal{P}^{s, w}$ is a polynomial of degree $\dim(\mathcal{P})+1$ whose leading coefficient is ${\rm vol}({\mathcal{P}_w})$. 
Assume that $p>1$. 
By induction, using Eq.~\eqref{nov8-22-1}, we obtain
$$
E_{\mathcal{P}_w}(n)=E_\mathcal{P}^{s, w}(n)+g(n)
\mbox{ for }n\geq0,
$$
where $g$ is a polynomial of degree $\dim(\mathcal{P})+p-1$. 
Thus, by Proposition~\ref{nov6-22}, $E_\mathcal{P}^{s, w}=E_{\mathcal{P}_w}-g$ is a polynomial of degree $\dim(\mathcal{P})+p$ whose leading coefficient is ${\rm vol}({\mathcal{P}_w})$.
\end{proof}

A function $E\colon\mathbb{N}\rightarrow K$ is called a \textit{polynomial} if there is a polynomial $g(x)\in K[x]$ such that $E(n)=g(n)$ for all $n\in\mathbb{N}$. 
If $E(n)=g(n)$ for $n\gg 0$, we say that $E$ is a \textit{polynomial function}. 

The Ehrhart function $E_\mathcal{P}$ of a lattice polytope $\mathcal{P}$ is a polynomial in $n$ and so is the weighted Ehrhart function $E_\mathcal{P}^{s, f}$, $f\in S$.

\begin{corollary}\label{nonneg-poly} 
Let $f$ be a non-zero polynomial of $S=K[t_1,\ldots,t_s]$ of degree $p$, $\mathbb{R}\subset K$, and let $d$ be the dimension of the lattice polytope $\mathcal{P}$. 
The following hold.
\begin{enumerate}
\item[(a)] $E_\mathcal{P}^{s, f}$ is a polynomial of $n$ of degree at most $d+p$ and $F_\mathcal{P}^{s,f}(x)$ is a rational function.
\item[(b)] If $\mathcal{P}$ is non-degenerate and $f$ is a monomial, then $\deg(E_\mathcal{P}^{s, f})=d+p$. 
\item[(c)] $E_\mathcal{P}^{s, f}$ is a $K$-linear combination of Ehrhart polynomials. 
\item[(d)]\cite[Proposition~4.1]{Brion-Vergne} If the interior $\mathcal{P}^{\rm o}$ of $\mathcal{P}$ is nonempty, $K=\mathbb{R}$, and $f$ is homogeneous and $f\geq 0$ on $\mathcal{P}$, then $E_\mathcal{P}^{s, f}$ is a polynomial of degree $s+p$.    
\end{enumerate}
\end{corollary}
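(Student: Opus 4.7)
The plan is to reduce each part to the monomial case established in Proposition~\ref{nov8-22}. First I would write $f=\sum_{\alpha}c_{\alpha}t^{\alpha}$ as a $K$-linear combination of monomials with $|\alpha|\le p$ and exploit the obvious additivity
\[
E_{\mathcal{P}}^{s,f}(n)=\sum_{\alpha}c_{\alpha}E_{\mathcal{P}}^{s,t^{\alpha}}(n)
\]
in the weight. For each monomial $t^{\alpha}=t_{i_{1}}\cdots t_{i_{k}}$ with $k=|\alpha|$, either some factor $t_{i_{j}}$ vanishes identically on $\mathcal{P}$, in which case $E_{\mathcal{P}}^{s,t^{\alpha}}\equiv 0$, or none does, in which case Proposition~\ref{nov8-22} produces a polynomial of degree $d+k\le d+p$. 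Hence $E_{\mathcal{P}}^{s,f}$ is a polynomial of degree at most $d+p$. Moreover, Theorem~\ref{weighted-ehrhart}(d) together with the expansion $\prod_{j=1}^{k}(t_{i_{j}}+1)=\sum_{I\subseteq[k]}\prod_{j\in I}t_{i_{j}}$ rewrites each $E_{\mathcal{P}}^{s,t^{\alpha}}$ recursively as an integer linear combination of ordinary Ehrhart polynomials of the weight-lifting polytopes $\mathcal{P}_{\prod_{j\in I}t_{i_{j}}}$, yielding part (c). Rationality of $F_{\mathcal{P}}^{s,f}(x)$ in part (a) then follows from the classical fact that $\sum_{n\ge 0}P(n)x^{n}$ is rational for any polynomial $P(n)$.

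Part (b) falls out of the same reduction. Non-degeneracy of $\mathcal{P}$ means each coordinate $t_{i}$ takes a non-zero value at some vertex and hence is not identically zero on $\mathcal{P}$. Consequently, no factor of the monomial $f=t_{i_{1}}\cdots t_{i_{p}}$ vanishes on $\mathcal{P}$, so Proposition~\ref{nov8-22} applies and delivers $\deg(E_{\mathcal{P}}^{s,f})=d+p$ directly, with leading coefficient ${\rm vol}(\mathcal{P}_{f})$.

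For part (d), the setup simplifies: a non-empty interior forces $d=s$ and forces $\mathcal{P}$ to be non-degenerate, so (b) applies to every monomial appearing in $f$. Writing the homogeneous polynomial as $f=\sum_{|\alpha|=p}c_{\alpha}t^{\alpha}$, each $E_{\mathcal{P}}^{s,t^{\alpha}}$ is a polynomial of degree exactly $s+p$ with leading coefficient ${\rm vol}(\mathcal{P}_{t^{\alpha}})$. The subtle step, which I expect to be the main obstacle, is ruling out cancellation of the leading terms in $\sum_{\alpha}c_{\alpha}E_{\mathcal{P}}^{s,t^{\alpha}}$. For this I would invoke the identification of the leading coefficient of $E_{\mathcal{P}}^{s,f}$ as the integral $\int_{\mathcal{P}}f$, noted after Corollary~\ref{nonneg-poly} and proved in \cite{baldoni-etal,Brion-Vergne,Bruns-Soger}. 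Since $f$ is a non-zero polynomial and $\mathcal{P}^{\rm o}$ is non-empty, $f$ is not identically zero on $\mathcal{P}^{\rm o}$; combined with $f\ge 0$ on $\mathcal{P}$, this forces $\int_{\mathcal{P}}f>0$. Therefore the leading term survives and $\deg(E_{\mathcal{P}}^{s,f})=s+p$, recovering \cite[Proposition~4.1]{Brion-Vergne}.
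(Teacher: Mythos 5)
Your proposal is correct and follows essentially the same route as the paper: decompose $f$ into monomials, apply Proposition~\ref{nov8-22} for (a) and (b), induct via Theorem~\ref{weighted-ehrhart}(d) for (c), and identify the leading coefficient of $E_\mathcal{P}^{s,f}$ with $\int_\mathcal{P} f$ to rule out cancellation in (d). The only (immaterial) differences are that the paper derives the leading-coefficient-equals-integral fact directly by a Riemann-sum limit rather than citing it, and it locates a point where $f>0$ via Alon's combinatorial Nullstellensatz on a box inside $\mathcal{P}^{\circ}$, whereas you use the standard fact that a non-zero real polynomial cannot vanish on a nonempty open set.
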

\begin{proof} 
We can write $f=\sum_{b}\mu_bt^b$, $\mu_b\in K\setminus\{0\}$. 
Then, for $n\geq 0$, one has 
\begin{align}\label{nov28-22}
E_\mathcal{P}^{s, f}(n)=\hspace{-3mm}\sum_{a\in n\mathcal{P}\cap\mathbb{Z}^s}\hspace{-3mm}f(a)=\sum_{a\in n\mathcal{P}\cap\mathbb{Z}^s}\Big(\sum_{b}\mu_bt^b(a)\Big)= \sum_{b}\Big(\mu_b\hspace{-3mm}\sum_{a\in n\mathcal{P}\cap\mathbb{Z}^s}t^b(a)\Big)=\sum_{b}\mu_b E_\mathcal{P}^{s, t^b}(n).       
\end{align}

(a) By Proposition~\ref{nov8-22}, it follows readily that $E_\mathcal{P}^{s, t^b}$ is a polynomial of $n$ of degree at most $d+p$ for all $b$.  
Thus, by Eq.~\eqref{nov28-22}, $E_\mathcal{P}^{s, f}$ is a polynomial of $n$ of degree at most $d+p$. Then, by \cite[Corollary~4.3.1]{Sta5}, $F_\mathcal{P}^{s, f}(x)$ is a rational function. 

(b) Writing $f=w_1\cdots w_p$ with $w_i\in\{t_1,\ldots,t_s\}$ for all $i$ and noticing that $\mathcal{P}$ is non-degenerate if and only if $t_i\not\equiv 0$ on $\mathcal{P}$ for $i=1,\ldots,s$, by Proposition~\ref{nov8-22}, we get $\deg(E_\mathcal{P}^{s, f})=d+p$.

(c) By Eq.~\eqref{nov28-22}, it suffices to show that $E_\mathcal{P}^{s, t^b}$ is a $K$-linear combination of Ehrhart polynomials, and this follows using induction on $\deg(t^b)$ and Theorem~\ref{weighted-ehrhart}(d).

(d) As $\mathcal{P}^{\rm o}\neq\emptyset$, the dimension of $\mathcal{P}$ is equal to $s$, and consequently $t_i\not\equiv 0$ on $\mathcal{P}$ for $i=1,\ldots,s$. 
Then, by Proposition~\ref{nov8-22}, $E_\mathcal{P}^{s, t^b}$ is a polynomial of degree $s+p$ for all $t^b$. Let $u_b$ be the leading coefficient of $E_\mathcal{P}^{s, t^b}$. 
By Eq.~\eqref{nov28-22}, it suffices to show that $\sum_b\mu_bu_b\neq 0$. 
Setting $h=t^b$ and noticing that $h(a)=n^ph(\frac{a}{n})$ for all $a\in\mathbb{R}^s$ and $n\geq 1$, one has
\begin{align*}
u_b=\lim_{n\rightarrow\infty}\frac{E_\mathcal{P}^{s, h}(n)}{n^{s+p}}=\lim_{n\rightarrow\infty}\sum_{a\in n\mathcal{P}\cap\mathbb{Z}^s}\frac{h(\frac{a}{n})}{n^{s}}=\lim_{n\rightarrow\infty}\sum_{\frac{a}{n}\in \mathcal{P}\cap(\frac{1}{n}\mathbb{Z}^s)}\frac{h(\frac{a}{n})}{n^{s}}=\lim_{n\rightarrow\infty}\sum_{c\in \mathcal{P}\cap(\frac{1}{n}\mathbb{Z}^s)}\frac{h(c)}{n^{s}}=\int_\mathcal{P}h,
\end{align*}
where the last equality is followed by elementary integration theory. 
Thus, $u_b=\int_\mathcal{P}t^b$. The fact that the leading coefficient of $E_\mathcal{P}^{s, h}$ is equal to $\int_\mathcal{P}h$ appears in \cite[p.~437]{baldoni-etal}, and more recently in \cite[Proposition~5]{Bruns-Soger}. 
Therefore      
\begin{equation}\label{nov28-22-1}
\sum_b\mu_bu_b=\sum_b \mu_b\int_\mathcal{P}t^b=\int_\mathcal{P}\sum_b\mu_bt^b=\int_\mathcal{P}f.
\end{equation}
\quad As $\mathcal{P}^{\rm o}\neq\emptyset$, there is a box $B=B_1\times\cdots\times B_s$ contained in an open subset of $\mathcal{P}$, where $B_i$ is a closed interval of $\mathbb{R}$ with $|B_i|=\infty$ for all $i$. 
Then, by Alon's combinatorial Nullstellensatz \cite{alon-cn}, \cite[Theorem~8.4.11]{monalg-rev}, there is $\beta\in B$ with $f(\beta)>0$. 
Therefore, $\int_\mathcal{P}f>0$ because $f$ is a continuous function and $f\geq 0$ on $\mathcal{P}$. 
Hence, by Eq.~\eqref{nov28-22-1}, $\sum_b\mu_bu_b>0$.  
\end{proof}

\begin{proposition}\label{ineq-ehrhart}
Let $f$ be a homogeneous polynomial of degree $p$ in $K[t_1,\ldots,t_s]$, $K=\mathbb{R}$, and let $\mathcal{P}$ be a lattice polytope in $\mathbb{R}^s$. 
Then, there are $c_1,c_2\in\mathbb{R}$, $c_1\leq c_2$, such that
$$
c_1n^pE_\mathcal{P}(n)\leq E_\mathcal{P}^{s, f}(n)\leq c_2n^pE_\mathcal{P}(n)\mbox{ for }n\geq 0.
$$
\end{proposition}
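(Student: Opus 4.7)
The plan is to exploit the fact that $f$, being a polynomial, is continuous, and $\mathcal{P}$ is compact, together with the homogeneity of $f$, to get uniform pointwise bounds that sum to the desired inequality.

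First, I would set
\[
c_1:=\min_{x\in\mathcal{P}}f(x),\qquad c_2:=\max_{x\in\mathcal{P}}f(x),
\]
which exist and are finite since $f$ is continuous and $\mathcal{P}$ is compact; clearly $c_1\leq c_2$. Next, fix $n\geq 1$ and any lattice point $a\in n\mathcal{P}\cap\mathbb{Z}^s$. Then $a/n\in\mathcal{P}$, so $c_1\leq f(a/n)\leq c_2$, and since $f$ is homogeneous of degree $p$, $f(a)=n^pf(a/n)$. Multiplying through by $n^p$ yields the pointwise estimate
\[
c_1 n^p\leq f(a)\leq c_2 n^p\qquad\text{for every }a\in n\mathcal{P}\cap\mathbb{Z}^s.
\]
Summing over all such $a$ and noting that the number of terms is exactly $E_\mathcal{P}(n)=|n\mathcal{P}\cap\mathbb{Z}^s|$, I would conclude
\[
c_1 n^p E_\mathcal{P}(n)\leq \sum_{a\in n\mathcal{P}\cap\mathbb{Z}^s}f(a)=E_\mathcal{P}^{s,f}(n)\leq c_2 n^p E_\mathcal{P}(n).
\]

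The only remaining issue is the case $n=0$. Under the standard convention $0\cdot\mathcal{P}=\{0\}$, one has $E_\mathcal{P}(0)=1$. If $p\geq 1$, homogeneity gives $f(0)=0$, so $E_\mathcal{P}^{s,f}(0)=0$ and both sides of the inequality vanish (interpreting $0^p=0$). If $p=0$, then $f$ is a nonzero constant $c$, and one has $c_1=c_2=c$, so $E_\mathcal{P}^{s,f}(n)=cE_\mathcal{P}(n)$ trivially. In either case the inequality holds at $n=0$, completing the argument.

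There is really no serious obstacle here: the proof rests entirely on the compactness of $\mathcal{P}$, continuity of $f$, and the homogeneity identity $f(a)=n^pf(a/n)$. The only small care needed is the boundary case $n=0$, handled by the convention above.
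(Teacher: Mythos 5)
Your proof is correct and follows essentially the same route as the paper: both arguments take $c_1$ and $c_2$ to be the extrema of $f$ on the compact set $\mathcal{P}$, use the homogeneity identity $f(a)=n^pf(a/n)$ for $a\in n\mathcal{P}\cap\mathbb{Z}^s$, and sum the resulting pointwise bounds over the $E_\mathcal{P}(n)$ lattice points. Your explicit treatment of the $n=0$ case is a small extra care the paper omits, but otherwise the arguments coincide.
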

\begin{proof} 
For $a\in n\mathcal{P}\cap\mathbb{Z}^s$ and $n\geq 1$, we can write $a=nb$, with $b\in\mathcal{P}$. 
Hence, $f(a)=n^pf(b)$ because $f$ is homogeneous of degree $p$. 
We set 
$$
c_1=\inf\{f(x)\mid x\in\mathcal{P}\}\mbox{ and }c_2=\sup\{f(x)\mid x\in\mathcal{P}\}.
$$
\quad Since $f$ is a continuous function and $\mathcal{P}$ is a compact subset of $\mathbb{R}^s$, $c_1$ and $c_2$ are real numbers and $f(\mathcal{P})\subset[c_1,c_2]$. 
Thus  $n^pc_1\leq f(a)=n^pf(b)\leq n^pc_2$, therefore $c_1n^pE_\mathcal{P}(n)\leq E_\mathcal{P}^{s, f}(n)=\sum_{\scriptstyle a\in n\mathcal{P}\cap\mathbb{Z}^s}f(a)\leq c_2n^pE_\mathcal{P}(n),$
and the proof is complete. 
\end{proof}

\begin{corollary}\label{linear-weighted-ehrhart} Let $v_1,\ldots,v_m$ be points in $\mathbb{N}^s$, let $\mathcal{P}={\rm conv}(v_1,\ldots,v_m)$, and let $w\colon\mathbb{R}^s\rightarrow\mathbb{R}$ be a non-zero linear function such that $w(e_i)\in\mathbb{N}$ for $i=1,\ldots,s$. 
The following hold.     
$$
\leqno\ \ \ {\rm(a)}\quad E_\mathcal{P}^{s,w}(n):=\sum_{a\in n\mathcal{P}\cap\mathbb{Z}^s}w(a)=E_{\mathcal{P}_{w}}(n)-E_\mathcal{P}(n)\mbox{ for }n\geq 0,
$$
where ${\mathcal{P}_w}={\rm conv}((v_1,0),\ldots,(v_m,0),(v_1,w(v_1)),\ldots(v_m,w(v_m)))$.

{\rm (b)} If $w=\sum_{i=1}^s\eta_it_i$, $t_i\not\equiv 0$ on $\mathcal{P}$ whenever $\eta_i>0$, and $d=\dim(\mathcal{P})$, then $E_\mathcal{P}^{s, w}$ is a polynomial of degree $d+1$ and the generating function
$F_\mathcal{P}^{s, w}$ of $E_\mathcal{P}^{s, w}$ is equal to   
$$
F_\mathcal{P}^{s, w}(x):=\sum_{n=0}^\infty E_\mathcal{P}^{s, w}(n)x^n=\frac{h(x)}{(1-x)^{d+2}},
$$
where $h(x)$ is a polynomial of degree at most $d+1$ with nonnegative integer coefficients.
\end{corollary}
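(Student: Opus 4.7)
Here is the plan.

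\noindent\textbf{Reducing (a) to Theorem~\ref{weighted-ehrhart}.}
Part (a) is a direct specialization of Theorem~\ref{weighted-ehrhart}(d) to the single-weight case $p=1$. Indeed, $\prod_{i=1}^{1}(w_i+1)=w+1$, so that theorem gives
\[
E_{\mathcal{P}_w}(n)=E_\mathcal{P}^{s,w+1}(n)=\sum_{a\in n\mathcal{P}\cap\mathbb{Z}^s}\bigl(w(a)+1\bigr)=E_\mathcal{P}^{s,w}(n)+E_\mathcal{P}(n),
\]
and rearranging yields (a). This already implies rationality of $F_\mathcal{P}^{s,w}(x)$ since it is the difference of two Ehrhart series.

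\noindent\textbf{Dimension of $\mathcal{P}_w$ and the degree statement in (b).}
The next step is to show $\dim(\mathcal{P}_w)=d+1$. Pick $i_0$ with $\eta_{i_0}>0$; by hypothesis $t_{i_0}\not\equiv 0$ on $\mathcal{P}$, and since all $v_j$ lie in $\mathbb{N}^s$, some vertex $v_j$ satisfies $(v_j)_{i_0}>0$, hence $w(v_j)\geq\eta_{i_0}(v_j)_{i_0}>0$. Therefore $\mathcal{P}_w$ contains the $d$-dimensional face $\mathcal{P}\times\{0\}$ together with a point $(v_j,w(v_j))$ off the hyperplane $\{x_{s+1}=0\}$, giving $\dim(\mathcal{P}_w)=d+1$. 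Consequently $E_{\mathcal{P}_w}$ is a polynomial of degree $d+1$, and since $E_\mathcal{P}$ has degree $d$, part (a) forces $E_\mathcal{P}^{s,w}$ to be a polynomial of degree exactly $d+1$.

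\noindent\textbf{The numerator via Stanley's positivity and monotonicity.}
For the generating-function formula, write the classical Ehrhart series
\[
F_{\mathcal{P}_w}(x)=\frac{h_1^*(x)}{(1-x)^{d+2}},\qquad F_\mathcal{P}(x)=\frac{h_2^*(x)}{(1-x)^{d+1}},
\]
where $h_1^*$, $h_2^*$ are the $h^*$-polynomials of the lattice polytopes $\mathcal{P}_w$ and $\mathcal{P}$. Summing the identity in (a) over $n$ and placing both terms over the common denominator $(1-x)^{d+2}$ gives
\[
F_\mathcal{P}^{s,w}(x)=F_{\mathcal{P}_w}(x)-F_\mathcal{P}(x)=\frac{h_1^*(x)-(1-x)h_2^*(x)}{(1-x)^{d+2}}.
\]
Set $h(x):=h_1^*(x)-(1-x)h_2^*(x)=\bigl(h_1^*(x)-h_2^*(x)\bigr)+x\,h_2^*(x)$. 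Stanley's positivity theorem guarantees $h_2^*\in\mathbb{N}[x]$, so $x\,h_2^*(x)$ has nonnegative integer coefficients. The embedded polytope $\mathcal{P}\times\{0\}\subset\mathbb{R}^{s+1}$ has the same Ehrhart polynomial as $\mathcal{P}$, hence the same $h^*$-polynomial $h_2^*$, and it is contained in $\mathcal{P}_w$; therefore Stanley's monotonicity theorem yields $h_2^*[i]\leq h_1^*[i]$ coefficient-wise, so $h_1^*-h_2^*\in\mathbb{N}[x]$ as well. Thus $h\in\mathbb{N}[x]$ with $\deg h\leq d+1$, as required.

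\noindent\textbf{Anticipated obstacle.}
The only delicate point is invoking Stanley's monotonicity across polytopes of different dimensions ($\dim\mathcal{P}=d$, $\dim\mathcal{P}_w=d+1$); this is legitimate once one views $h^*$-vectors as sequences padded with zeros, which is the form in which the monotonicity theorem is standardly stated. Everything else is bookkeeping on top of Theorem~\ref{weighted-ehrhart}(d) and the two classical theorems of Stanley.
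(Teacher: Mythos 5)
Your proposal is correct and follows essentially the same route as the paper: part (a) is the $p=1$ specialization of Theorem~\ref{weighted-ehrhart}(d), and part (b) combines $F_{\mathcal{P}_w}-F_\mathcal{P}$ over the common denominator $(1-x)^{d+2}$ and uses Stanley's positivity plus monotonicity applied to $\mathcal{P}\times\{0\}\subset\mathcal{P}_w$ to get $h=(h_1^*-h_2^*)+xh_2^*\in\mathbb{N}[x]$. The only (harmless) deviation is that you establish $\dim(\mathcal{P}_w)=d+1$ by a direct geometric argument exhibiting a vertex of positive weight, where the paper instead invokes Proposition~\ref{nov8-22} and Lemma~\ref{nov6-22}; both yield the same degree and denominator.
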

\begin{proof} 
(a) From Theorem~\ref{weighted-ehrhart}(d), we get $E_{\mathcal{P}_{w}}(n)=E_\mathcal{P}^{s, w+1}(n)=E_\mathcal{P}^{s, w}(n)+E_\mathcal{P}(n)$ for $n\geq 0$.

(b) If $\eta_i>0$, by Proposition~\ref{nov8-22}, $\eta_iE_\mathcal{P}^{s, t_i}$ is a polynomial of degree $\dim(\mathcal{P})+1$ whose leading coefficient is positive. 
As $E_\mathcal{P}^{s, w}=\sum_{i=1}^s\eta_iE_\mathcal{P}^{s, t_i}$, $E_\mathcal{P}^{s, w}$ is a polynomial of degree $\dim(\mathcal{P})+1$. 
By part (a), Proposition~\ref{nov6-22}, and Stanley's result that $h^*$-vectors of lattice polytopes have nonnegative integer coefficients \cite[Theorem~2.1]{Stanley-nonneg-h-vector}, one has
\begin{align*} 
&F_\mathcal{P}^{s, w}(x)=F_{\mathcal{P}_w}(x)-F_\mathcal{P}(x)=\frac{f(x)}{(1-x)^{d+2}}-\frac{(1-x)g(x)}{(1-x)^{d+2}}=\frac{h(x)}{(1-x)^{d+2}},
\end{align*}
where $f(x)$ and $g(x)$ are polynomials with non-negative integer coefficients of degrees at most $d+1$ and $d$, respectively. 
Note that $\mathcal{P}':={\rm conv}((v_1,0),\ldots,(v_m,0))\subset{\mathcal{P}_w}$, the Ehrhart series of $\mathcal{P}'$ and $\mathcal{P}$ are equal, and $f(x)$ and $g(x)$ are the $h^*$-polynomials of ${\mathcal{P}_w}$ and $\mathcal{P}$, respectively. 
Then, by Stanley's  monotonicity property of $h^*$-vectors \cite[Theorem~3.3]{stanley-mono} (cf. \cite[Theorem~3.3]{beck-deco}), we get that $h^*({\mathcal{P}_w})\geq h^*(\mathcal{P}')=h^*(\mathcal{P})$ componentwise, and consequently $f(x)\geq g(x)\geq 0$ coefficientwise. 
Therefore, from the equalities
$$
h(x)=f(x)-(1-x)g(x)=f(x)-g(x)+xg(x),
$$
we obtain that $h(x)$ has nonnegative integer coefficients.
\end{proof}

We conclude this section illustrating Theorem \ref{jdl} and
Corollary~\ref{linear-weighted-ehrhart}.
\begin{example}\label{hope-figure} 
Let $\mathcal{P}=[0,1]\times [0,1]$ be the unit square in
$\mathbb{R}^2$ and let $w$ be the linear weight $w(y_1,y_2)=y_1+y_2$.
The weight lifting polytopes $\mathcal{P}^w$ and $\mathcal{P}_w$ of
$\mathcal{P}$ relative to $w$ are given by  
\begin{align*}
\mathcal{P}^w=&{\rm conv}((0,0,0),(1,0,1),(0,1,1),(1,1,2)),\\
\mathcal{P}_w&={\rm conv}((0,0,0),(1,0,0),(0,1,0),(1,1,0),(1,0,1),(0,1,1),(1,1,2)).
\end{align*}
\quad The polytope $\mathcal{P}_w$ is depicted in Figure~\ref{figure-vila}. 
The ``upper facet'' of $\mathcal{P}_w$ is precisely
$\mathcal{P}^w$ and $\mathcal{P}_w$ contains not only $\mathcal{P}^w$
but also all points below $\mathcal{P}^w$.   
By, Theorem \ref{jdl} and Corollary~\ref{linear-weighted-ehrhart}, one has  
$$
E_\mathcal{P}^{s,w}(n)=\sum_{a\in n\mathcal{P}\cap\mathbb{Z}^2}w(a)=
E_{\mathcal{P}_w}(n)-E_{\mathcal{P}^w}(n)=E_{\mathcal{P}_w}(n)-E_{\mathcal{P}}(n)\mbox{
for all }n\geq 0.  
\vspace{-0.6cm}
$$
\begin{figure}[htp]
\includegraphics[scale=0.6,height=2in,width=2in]{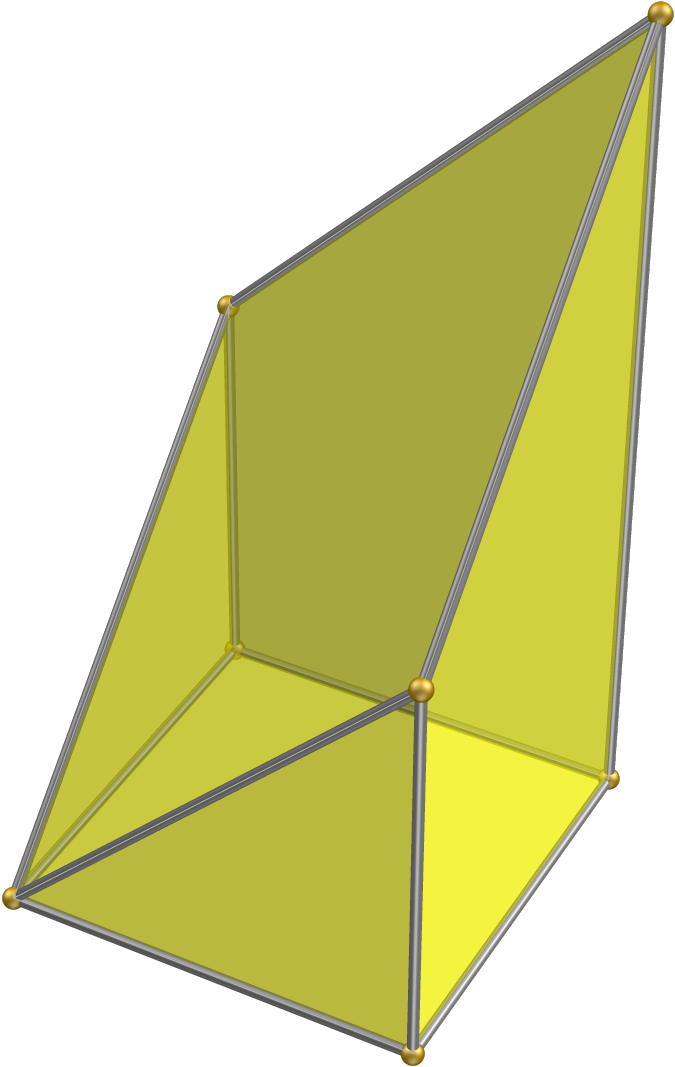}
\caption{The weight lifting polytope $\mathcal{P}_w$ of the unit square $\mathcal{P}=[0,1]^2$ relative to $w=y_1+y_2$. 
The weight lifting polytope $\mathcal{P}^w$ appears as the upper facet.}
\label{figure-vila}
\end{figure}
\end{example}  

\section{Open Problems}

We conclude the paper with a few open problems about weighted Ehrhart
functions and series. 

For $s$-weighted Ehrhart series, there are sufficient conditions on
the homogeneous weight function for the nonnegativity of the
coefficients of the $h^*$-polynomial \cite[Theorem 2.6,
Section~3.2]{jdl3}. But it remains an open problem to fully
characterize the family of homogeneous weights that yield nonnegative
coefficients of the $h^*$-polynomial.      

There are also natural questions about the properties of the
numerator in an $s$-weighted Ehrhart series. Are there sometimes
unimodal in their coefficients? For instance,  Stanley conjectured that
for lattice polytopes with the integer decomposition property they
have unimodal $h^*$-vectors (see \cite{Beck-J-M} and the survey paper
on unimodality for Ehrhart $h^*$-vectors \cite{Braun}), so it is natural
to ask when we add weights. What are the geometric conditions on the
polytope and the weights that force unimodality?      
Similarly, when do the coefficients of the $h^*$-vector admit
combinatorial interpretations? Or, what replaces the standard
Stanley/Hibi \cite{Hibi,Stanley-nonneg-h-vector,Sta2} inequalities in the weighted case? When are the
coefficient sequences log-concave? This is open even for many natural
weight functions.    

While some of the arguments and statements here only hold for lattice
polytopes it would be desirable to extend to rational polytopes. In
particular we ask whether  one can extend the reciprocity Theorem
\ref{Thm:s-weightedreciprocity} to half-opened polytopes. We
conjecture this is true and can have interesting consequences.
Ultimately, what is the correct reciprocity statement for a broad
classes of weights? If the weights are piece-wise polynomial, how do
singularities or discontinuities in the weight affect reciprocity?       

Another exciting new recent variation of weighted Ehrhart theory comes 
from playing with new types of weights, not necessarily coming from
polynomial functions, but instead weights that take values on a
general Abelian group (see \cite{ehrhartoverabeliangroup}).
Similarly, while for ordinary Ehrhart theory of lattice-point
counting, one has local formulas involving faces, cones, and  
Euler--Maclaurin phenomena (on the style of McMullen or
Berline--Vergne local formulas). See \cite{barvinokbook}. Can every
weighted Ehrhart coefficient be expressed canonically as a sum of
local face contributions? Are there weighted local formulas?   
It seems our weight lifting polytope may give a clue as to what these
local formulas can be. 

Finally, in this paper, we discussed earlier in Theorem
\ref{compatible-theorem} that some special compatible triangulations
suffice to give the positivity of the $q$-weighted Ehrhart series,
but it would be desirable to have a better picture and perhaps arrive
at a sufficient and necessary condition. Perhaps it is at least
possible to give a classification of those polytopes that have
compatible triangulations.

\section*{Acknowledgments} 
We used \textit{Normaliz} \cite{normaliz2} and \textit{Macaulay}$2$
\cite{mac2} to compute weighted Ehrhart polynomials and series and
volumes of lattice polytopes. We thank the anonymous referees for
their help and suggestions.   

\section*{\ }  
On behalf of all authors, the corresponding author states that there is no conflict of interest.

Data sharing is not applicable to this article as no datasets were
generated or analyzed during the current study. 

\bibliographystyle{plain}

\end{document}